\documentclass{amsart}
\usepackage[utf8]{inputenc}

\usepackage{amsmath}
\usepackage{amsthm}

\usepackage[colorlinks=true, allcolors=blue]{hyperref}
\usepackage{cleveref}

\usepackage{MnSymbol}
\usepackage{wasysym}
\usepackage{graphicx}
\usepackage{tikz}
\usepackage{setspace}
\usepackage{float}

\author{Alfredo Roque Freire}
\author{Manuel A. Martins}
\title{Regular non-normal modal classicalities} 

\newtheorem{definition}{Definition}[section]
\newtheorem*{definition*}{Definition}
\newtheorem{theorem}{Theorem}[section]
\newtheorem{lemma}{Lemma}[section]
\newtheorem{corollary}{Corollary}[section]

\newtheorem{notation}{Notation}[section]

\newcommand{\so}{\rightarrow}

\newcommand{\miff}{\Longleftrightarrow}
\renewcommand{\iff}{\longleftrightarrow}

\newcommand{\Lg}{\mathcal{L}}

\newcommand{\pair}[1]{\langle #1 \rangle}

\newcommand{\mlms}{many-logics modal structure }

\newcommand{\cl}{\mathcal{L}}
\newcommand{\bBox}{\blacksquare}

\newcommand{\mBox}{\boxminus}

\usepackage[backend=bibtex,style=alphabetic,maxbibnames=15,maxcitenames=6,dateabbrev=false]{biblatex}
\newcommand{\B}{$\mathcal{B}_4^\circ$}
\newcommand{\For}{\mathrm{Form}}
\newcommand{\VAR}{\mathrm{Var}}

\newtheorem{remark}{Remark}

\renewcommand\labelenumi{(\roman{enumi})}
\renewcommand\theenumi\labelenumi

\begin{document}

\begin{abstract}
We present a novel investigation into the consistency operator ($\circ$), traditionally associated with paraconsistent logics, as a means of capturing non-normal modal classicalities within the Kripke framework. By semantically reinterpreting $\circ$ as an operator that distinguishes top and bottom values from other values in the algebra, we extend its applicability beyond paraconsistency into classical and modal logics. We introduce the logic $\mathcal{B}_4^\circ$, a four-valued Boolean logic augmented with the consistency operator, and provide a sound and complete axiomatization. Building on this foundation, we extend the semantics to the modal domain using the many-logics modal logic (MLML) framework. Specifically, we construct Kripke frames based on an eight-valued Boolean algebra that contains three distinct four-valued subalgebras, each representing a different world type. Our analysis reveals that the resulting modal logic exhibits a normal local consequence relation alongside a non-normal global consequence relation. Consequently, the characterization of frame properties --such as transitivity, reflexivity, and Euclideanness --deviates from modal logic $K$, requiring novel semantic tools. We further identify new modal formulas in an extended language that capture previously unavailable kinds of accessibility, leading to frame characterizations unattainable in traditional modal frameworks.
\end{abstract}

\maketitle

\vspace{0.5cm}

\textit{This article is dedicated to our dear friend and professor, Marcelo Coniglio, whose guidance has been and continues to be a source of great learning for us.}

\section{Introduction}

This article deals with what can be expressed with the $\circ$ operator (often called the consistency operator) in a novel context, namely, to express non-normal modal consequence relations within Kripke frames. To achieve this goal, we develop three ideas devised in the following three subsections.

\subsection{A semantical abstraction of the consistency operator}

The consistency operator was originally used in the formalization of $C_n$ paraconsistent systems by Newton da Costa \cite{da_costa_theory_1974, costa_sistemas_1993}. The operator is used in $C_n$ to recover the \textit{proof by contradiction} for particular formulas within $C_n$'s consequence relation. This is achieved with the following axiom \cite[p. 499]{da_costa_theory_1974}:
\begin{center}
    Axiom 12 - $\overbrace{\circ \ldots \circ}^n B \so ((A \so B) \so ((A \so \lnot B) \so \lnot A))$
\end{center}
In $C_1$, obtaining the inconsistency $B$ and $\lnot B$ from $A$ allows us to conclude $\lnot A$ only when $\circ B$ occurs. It is later easy to observe that $C_n$ trivializes in the presence of 
$$\overbrace{\circ \ldots \circ}^n X \land X \land \lnot X$$
A way of recovering trivialization is important. A paraconsistent system where trivialization cannot be recovered legs behind in capturing some important mathematical and scientific reasoning. 

Da Costa only took $\circ X$ to be the abbreviation of $\lnot (X \land \lnot X)$, so $\circ$ did not have an independent interpretation as observed in later research. 
With the works of Carnielli, Coniglio and Marcos \cite{marcos_logics_2005, carnielli_logics_2007}, we observe a more abstract take on the $\circ$ operator. Instead of approaching $\circ \varphi$ simply as the abbreviation of $\lnot (\varphi \land \lnot \varphi)$, they consider it more abstractly as the operator that recovers the principle of explosion for the particular formula over some basic formulation of a paraconsistent system. 
This opens up the study of the $\circ$ operator within a comprehensive class of paraconsistent systems.

The present paper sketches yet another kind of abstraction for the $\circ$ operator. Instead of viewing it through its role in a formal system, we investigate the operator semantically. We will view the operator as responsible for filling expressive gaps in a many-valued semantics.
Consider the example of the \textit{logic of paradox} (LP) introduced by Priest in \cite{priest_logic_1979}. When extended with $\circ$ operator, this logic results in the LFI1 system\footnote{This requires us to remove the implication from the logical language; but in both LP and LFI1 the implication is definable with the other logical operators.}  (see. \cite[p. 158-167]{carnielli_paraconsistent_2016}). The three-valued lattice semantics with values $\{1,c,0\}$ for LP is such that $0 < c < 1$ and $-1 = 0$, $-0 = 1$ and $-c = c$. Now, what are the subsets of $\{1,c,0\}$ that are characterizable by a LP formula? Indeed, we have
\begin{enumerate}
    \item $\{1,c,0\} = \{v(p) \mid v \text{ is a LP valuation}\}$.
    \item $\{1,c\} = \{v(p \lor \lnot p) \mid v \text{ is a LP valuation}\}$.
    \item $\{c, 0\} = \{v(p \land \lnot p) \mid v \text{ is a LP valuation}\}$.
\end{enumerate}
Other subsets of semantical values are not characterizable in the same way. However, once we add the operator $\circ$ such that $\circ (1) = 1$, $\circ (0) = 1$ and $\circ (c) = 0$, we are now capable of characterizing all non-empty subsets of $\{1, c, 0\}$:
\begin{enumerate}
    \item $\{1\} = \{v(\lnot \circ p \lor p \lor \lnot p) \mid v \text{ is a LP valuation}\}$.
    \item $\{0\} = \{v(\circ p \land p \land \lnot p) \mid v \text{ is a LP valuation}\}$.
    \item $\{c\} = \{v(p) \mid v \text{ is a LP valuation}\}$.
    \item $\{1,0\} = \{v(\circ p) \mid v \text{ is a LP valuation}\}$.
\end{enumerate}
Of course, we may have added another operator that fills the same role of bridging the semantic gap. There is, nonetheless, something special about having this formulation. An operator that identifies top and bottom values is, firstly, something that can be more generally studied for other semantics (e.g. all finite lattice semantics have top and bottom values) and, secondly, this kind of operator has been largely studied in the recent paraconsistent literature. By considering the semantic interpretation of $\circ$, we open up the possibility of adding this operator to logics other than the paraconsistent ones. A logic that allows for trivialization may also find new expressible ideas by considering the new operator distinguishing the top and bottom values from the other values in their semantics.

\subsection{Extending the use of \texorpdfstring{$\circ$}{consistency operator} to classical logic} 

In this article, we shall observe that even within classical logic and classical modal logic, there is a meaningful role for the $\circ$ operator. As well documented, classical logic is algebraized by Boolean algebras, where an ultrafilter determines the valid values \cite{blok_algebraizable_1989}. This implies that every classical tautology yields values within the ultrafilter in any Boolean interpretation of the propositional variables. Similarly, for any finite consequence relation, if the premises take values in the ultrafilter, then so does the consequence. This general result may suggest that Boolean semantics with more than two values offer no additional semantic richness.

However, by considering a Boolean algebra with more than two values, we create opportunities to introduce additional operators that enhance expressivity within the Boolean lattice. In this context, we investigate four-valued Boolean algebras augmented with the $\circ$ operator. Analogously to its interpretation in LFI1, the $\circ$ operator is assigned the top value for the top and bottom values, while it takes the bottom value for all others.

To avoid unnecessary complexity, we focus on the four-valued Boolean algebra as a case study. In formalizing this semantics (see \cref{4-boolean}), we shall observe that the $\circ$ operator can be derived even in the absence of premises explicitly containing $\circ$. For instance, $\circ \varphi$ follows for every logical theorem $\varphi$, and every formula of the form $\circ \circ \psi$ is itself a theorem. This phenomenon naturally arises within the classical framework, unlike its traditional role in paraconsistent logics.

\subsection{Non normal global modal consequence relation}

In the second part of this paper, we apply the strategy of many logic modal logic systems \cite{freire_modality_2024} to derive a \textbf{non-normal modal global consequence relation} where every world is classical. A simple example illustrates a key difference: in classical normal modal logic, $\Box p$ is a global consequence of $p$, whereas in our proposed system, this will no longer hold.

The study of non-normal modal systems began with Dana Scott in a series of unpublished works (see \cite{lemmon_algebraic_1966, lewis_counterfactuals_1973} for Scott's attribution in key theorems). Subsequent investigations were carried out by Kripke \cite{kripke_semantical_1963} and many others (see \cite{berto_impossible_2019, priest_introduction_2008} for further developments). 
The failure of normality in our system comes from a broader perspective of necessity. Scott’s original approach introduces the \textbf{neighborhood semantics}, where modal necessity is determined by neighborhood structures rather than Kripkean accessibility relations\footnote{The neighborhood semantics framework was later extensively developed and defended by Lewis; see \cite{lewis_general_1970, lewis_counterfactuals_1973}.}. An alternative approach, which we adopt here, retains the standard accessibility relation but incorporates \textbf{non-normal worlds}—worlds where the truth valuation does not necessarily follow standard recursive clauses.

Our formulation introduces non-normality \textbf{even when all worlds are classical}. This is achieved by allowing the classical worlds to disagree on how to interpret evaluations made by other classical worlds. To formalize this, we employ an eight-valued Boolean algebra, which decomposes into three distinct four-valued Boolean algebras. Each of these four-valued structures represents a different type of world: $A$, $B$, and $C$. In this setting, a world of type $A$ considers $p$ to be possible in another world of type $A$ if and only if $p$ is valid. However, for worlds of types $B$ and $C$, $p$ is deemed possible only when it is valid with the top value.

With this approach, we will show that the \textbf{local consequence relation} remains identical to that of classical Kripke semantics. However, the \textbf{global consequence relation} deviates from normality, leading to a different strategy and results for frame characterization (e.g. transitivity, reflexivity, Euclidean, etc.). In the final section, we explore these implications in detail and demonstrate how our system enables novel types of frame characterizations beyond those available in traditional modal logics.

\subsection{Outline of the paper}

In Section \ref{sec_4-valued}, we introduce the algebra $B_4^\circ$, which is defined by a $4$-valued Boolean algebra with operation $\circledcirc$ and an ultrafilter over it. We provide a sound and complete axiomatization for this semantics. Then, in Section \ref{MLML}, we briefly explore the framework of many-logics modal systems introduced in \cite{freire_modality_2024}. Section \ref{sec:four-eight-logics} constitutes the core of the paper, where we examine the many-logics modal logic resulting from considering the $8$-valued Boolean algebra $B_8^\circ$ as our base lattice. The worlds in this semantics are associated with one of the three $4$-valued sublattices of $B_8^\circ$ and the necessity operator is defined according to the order relation in $B_8^\circ$. In Section~\ref{sec_prop}, we examine the characterization of frame properties using modal formulas. Traditional accessibility relations, such as reflexivity and transitivity, remain characterizable by the standard axioms $T$ and $4$ of traditional modal logic. However, properties such as Euclideanness require the use of the $\circ$ operator. We also investigate new accessibility relations that involve changes to the base lattice. For instance, we consider the characterization of `accessing a world in a different lattice', as well as a version of `transitivity restricted to worlds sharing the same lattice'. The paper concludes with a summary of our results and a discussion of potential directions for future research.

\section{The \texorpdfstring{$4^\circ$}{4-ball}-valued Boolean logic}\label{sec_4-valued}

Let $\For(\mathcal{L}^\circ)$ be the set of propositional formulas built from a set of variables $\VAR$ and the set of symbols $\mathcal{L}^\circ$ with unary connectives $\neg$ and $\circ$ and binary connectives $\wedge$ and $\vee$.

Throughout this paper, we adopt the following notation for Boolean algebras: $1$ denotes the top value, $0$ the bottom value, $+$ the join operation, $\cdot$ the meet operation and $-$ the complement.

Given a Boolean algebra $B$, we expand it by introducing a new operator $\circledcirc$, defined as follows:
$$\circledcirc x = \begin{cases}
	1, \text{ if } x \in \{1, 0\}\\
	0, \text{ otherwise}
\end{cases}$$
The algebra obtained by adding to the $n$-valued Boolean algebra the operator $\circledcirc$ will be called the $n^\circ$-valued Boolean algebra. We use $B^\circ_n$ to refer to this Boolean algebra.

\begin{definition}
    A valuation in a $n^\circ$-valued Boolean algebra $B^\circ_n$ is a function $v: \For(\cl^\circ) \longrightarrow B^\circ_n$ such that 
		\begin{enumerate}
			\item $v(\lnot \varphi) = - v(\varphi)$
			\item $v(\circ \varphi) = \circledcirc (v(\varphi))$
			\item $v(\varphi \lor \psi) = v(\varphi) + v(\psi)$
			\item $v(\varphi \land \psi) = v(\varphi) . v(\psi)$.
		\end{enumerate}
    Furthermore, we define the connectives $\varphi \so \psi$ and $\varphi \iff \psi$ as abbreviations of $(\lnot \varphi \lor \psi)$ and $((\varphi \so \psi) \land (\psi \so \varphi))$, respectively.
\end{definition}

We should also consider a binary evaluation of formulas in a satisfaction relation. For that, we make use of the ultrafilter $U$ associated with the Boolean algebra $B^\circ_n$.

\begin{definition}
	Let $U$ be an ultrafilter over $B^\circ_n$.
	Given a valuation $v$, we say that $v$ satisfies a formula $\varphi$ when $v(\varphi) \in U$. In this case, we write $v \vDash_{B^\circ_n}^U \varphi$.
	Moreover, we say that $\varphi$ is valid (in symbols, $\vDash_{B^\circ_n}^U \varphi$) when for every valuation $v$ $v \vDash_{B^\circ_n}^U \varphi$.
\end{definition}

When $B$ and $U$ are clear from the context, we just write $\vDash$ for $\vDash_{B^\circ_n}^U$. 

\subsection{The logic of \texorpdfstring{$4^\circ$}{4-ball}-Boolean Algebra}\label{4-boolean}

Next, we take a closer look at the case where we consider $B^\circ_4$ together with an ultrafilter $U$ over $B^\circ_4$. We will show that the corresponding logic, denoted by $\mathcal{B}$, can be axiomatized by the following set of axioms and inference rules:

\begin{enumerate}
    \item\label{lc} (CL) Classical logic axioms and rules for the extended language. (Classical logic)
    \item\label{double-ball} (DB) $\vdash \circ \circ \varphi$. (Double ball axiom)
    \item\label{ball-rule} (BR) $\circ \varphi \dashv\vdash \circ \lnot \varphi$. (Ball rule)
    \item\label{ball-factorization} (BF) $\circ \varphi, \circ \psi \vdash \circ (\varphi \land \psi)$; $\circ \varphi, \circ \psi \vdash \circ (\varphi \lor \psi)$. (Ball factorization rule)
    \item\label{affirming-ball-rule} (AwB) $\varphi, \circ \varphi \vdash \circ (\varphi \lor \psi)$ (Affirming with ball rule)
    \item\label{negating-ball-rule} (NwB) $\lnot \varphi, \circ \varphi \vdash \circ (\varphi \land \psi)$  (Negating with ball rule)
    \item\label{not-ball-rule} (NB) $\lnot \circ \varphi, \circ \psi \vdash \lnot \circ(\varphi \land \psi) \lor \lnot \circ (\varphi \lor \psi)$. (Not ball rule)
    \item\label{two-not-ball-rule} (TNB1) $\lnot \circ \varphi, \lnot \circ \psi, \varphi \land \psi \vdash \lnot \circ(\varphi \land \psi)$. (Two not ball rule 1)
    \item (TNB2) $\lnot \circ \varphi, \lnot \circ \psi, \lnot (\varphi \lor \psi) \vdash \lnot \circ(\varphi \land \psi)$. (Two not ball rule 2)
    \item\label{ball-conjunction} (BC) $\circ (\varphi \land \psi), \varphi \land \psi \vdash \circ \varphi \land \circ \psi$. (Ball and conjunction rule)
    \item\label{opposite-value} (OV) $\circ \varphi \iff \circ \psi, \varphi \iff \lnot \psi \vdash \circ (\varphi \land \psi)$. (Opposite value rule)
    \item\label{intro-ball} (IB) if $\vdash \varphi$, then $\vdash \circ \varphi$. (Introduction of ball)
\end{enumerate}

The operator $\circ \varphi$ can be interpreted as saying that the truth value of $\varphi$ is certain or fully known. This aligns with the epistemic interpretation of paraconsistency proposed by Carnielli and Rodrigues \cite{carnielli_epistemic_2019, rodrigues_paraconsistency_2023}, where logical inconsistency is handled in terms of evidence and knowledge. In a modal context, applying $\circ$ to a formula serves in the current context as a guarantee, across alternative possible worlds, that the formula should be regarded as definitively true or definitively false. Since we are working within a classical framework, the resulting systems differ in how they handle conflicting evidence: while Carnielli and Rodrigues' approach allows for the coexistence of contradictory information, the present system does not permit such conflicts.

\begin{theorem}[Sound and complete]\label{sound-complete-10-classic}
    For every set of formulas $\Gamma \cup \{\varphi\}\subseteq \For(\cl^\circ)$,
    $$\Gamma \vDash \varphi \text{ if, and only if, } \Gamma \vdash \varphi$$
\end{theorem}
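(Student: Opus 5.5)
The plan is to prove the two directions separately. Both rest on one structural observation about $B^\circ_4$. Write $B^\circ_4 \cong \mathbf{2}\times\mathbf{2}$, with elements $(x_1,x_2)$, and choose coordinates so that the ultrafilter is $U=\{(1,x_2)\}$. Then a valuation $v$ amounts to a pair of classical valuations $(v_1,v_2)$ on the Boolean connectives. Moreover $\circledcirc(x_1,x_2)=(1,1)$ if $x_1=x_2$, and $(0,0)$ otherwise. So $v\vDash\varphi$ iff $v_1(\varphi)=1$, and $v(\circ\varphi)\in\{0,1\}$ always. This is the semantic reason behind (DB).

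\textbf{Soundness.} I will check each axiom and rule in this coordinate picture; every check is a small finite case analysis.
(CL) holds because $U$ is an ultrafilter, so membership in $U$ behaves classically.
(BR) holds since $-$ preserves $\{0,1\}$.
(BF), (AwB) and (NwB) hold because a conjunction or disjunction of ``agreeing'' pairs agrees. Also, if $\varphi$ has value $(1,1)$ then $\varphi\lor\psi$ has value $(1,1)$, and dually for $(0,0)$ and $\land$.
For (NB), (TNB1), (TNB2), (BC) and (OV) I will enumerate the possible pairs. For instance, for (OV), two non-agreeing values with opposite first coordinates must be $(1,0)$ and $(0,1)$, whose meet is $(0,0)$.

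For (IB) I restrict the rule to theorems, as stated. The swap automorphism $(x_1,x_2)\mapsto(x_2,x_1)$ commutes with all operations, including $\circledcirc$. So if $\varphi$ is valid, then $v_2(\varphi)=1$ for every $v$ as well. Hence $v(\varphi)=1$, and therefore $v(\circ\varphi)=1$.

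\textbf{Completeness.} I will argue contrapositively via Lindenbaum's lemma, which is available because (CL) gives classical logic in the extended language. Suppose $\Gamma\nvdash\varphi$, and extend $\Gamma\cup\{\lnot\varphi\}$ to a maximal consistent set $\Delta$. Define $v$ on variables by
\[ v_1(p)=1 \text{ iff } p\in\Delta, \qquad v_2(p)=v_1(p) \text{ if } \circ p\in\Delta, \qquad v_2(p)=1-v_1(p) \text{ otherwise}. \]
The truth lemma will be proved by simultaneous induction on $\psi$ and consists of two claims:
\begin{enumerate}
\item[(a)] $v_1(\psi)=1$ iff $\psi\in\Delta$;
\item[(b)] $v(\psi)\in\{0,1\}$ iff $\circ\psi\in\Delta$.
\end{enumerate}
Claim (a) for $\lnot,\land,\lor$ is the classical argument. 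For $\circ\psi$, claim (a) follows from (b) of the inductive hypothesis. Claim (b) for $\circ\psi$ follows from (DB), since $\circ\circ\psi\in\Delta$ and $v(\circ\psi)\in\{0,1\}$ always. Claim (b) for $\lnot\psi$ follows from (BR).

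\textbf{Main obstacle.} The core, and the step I expect to be hardest, is claim (b) for $\psi\land\chi$ and $\psi\lor\chi$. I will split on whether $\circ\psi$ and $\circ\chi$ lie in $\Delta$, and on the memberships of $\psi$ and $\chi$. By the inductive hypothesis these determine $v(\psi)$ and $v(\chi)$ exactly, so in each case I read off whether the value of the compound is in $\{0,1\}$ and match it to a rule.
\begin{enumerate}
\item[$\bullet$] Both components in $\{0,1\}$: use (BF).
\item[$\bullet$] Exactly one component in $\{0,1\}$ and that component decides the compound: use (AwB) or (NwB).
\item[$\bullet$] Exactly one component in $\{0,1\}$, remaining cases: (NB) combined with the other case analysis should pin down which of the two compounds fails $\circ$. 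I am least sure this one rule suffices for every subcase, and it is what I will check first.
\item[$\bullet$] Both components outside $\{0,1\}$: (TNB1), (TNB2) and (OV) cover the values $(1,0),(1,0)$, then $(0,1),(0,1)$, then $(1,0),(0,1)$.
\end{enumerate}
The converse direction, that $\circ$ of the compound forces the expected component behaviour, comes from (BC), its dual obtained via (BR) and De Morgan, and contraposition of the rules above.

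Finally, by the truth lemma $v\vDash\Gamma$ and $v\nvDash\varphi$, which completes the contrapositive.
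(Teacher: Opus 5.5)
Your plan is the paper's proof written in coordinates. With $a=(1,0)$, your canonical valuation is exactly the paper's $A$-canonical valuation, and claims (a) and (b) together are the four clauses of Lemma~\ref{conservation-values-10-classical}. Your soundness part is more explicit than the paper's, which leaves soundness to the reader, and the swap automorphism is a clean justification of (IB).

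Your worry about (NB) is unfounded, up to the argument-order issue below. If $\psi$ is unballed and $\chi$ has value $(1,1)$, (AwB) makes the disjunction balled, so (NB) forces $\lnot\circ$ of the conjunction. If $\chi$ has value $(0,0)$, (NwB) makes the conjunction balled, so (NB) forces $\lnot\circ$ of the disjunction. This is exactly how the paper uses (NB) in the $\land$ case. Also, once each of the sixteen value pairs yields either $\circ$ or $\lnot\circ$ of the compound, the converse direction follows from the consistency of $\Delta$, so you do not need (BC) at all.

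The genuine gap is the $\lor$ case of claim (b), together with the argument order of the rules.
\begin{enumerate}
\item (CL) treats each $\circ\chi$ as an atom, and no listed rule lets you replace provable equivalents under $\circ$. So your ``dual of (BC) via (BR) and De Morgan'' is not available: (BR) takes $\circ(\psi\lor\chi)$ to $\circ\lnot(\psi\lor\chi)$, but no listed rule takes it on to $\circ(\lnot\psi\land\lnot\chi)$.
\item (TNB1), (TNB2) and (OV) conclude only $\lnot\circ(\psi\land\chi)$ or $\circ(\psi\land\chi)$. They therefore say nothing about $(1,0)\lor(1,0)$, $(0,1)\lor(0,1)$ or $(1,0)\lor(0,1)$.
\item (AwB), (NwB) and (NB) have a fixed argument order. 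For instance, $v(\psi)=(0,1)$, $v(\chi)=(1,1)$ requires $\circ(\psi\lor\chi)\in\Delta$, while (AwB) only yields $\circ(\chi\lor\psi)$. The same happens for $\land$ whenever the decisive conjunct is on the right.
\end{enumerate}
The paper does not close this either. Its proof treats only $\lnot,\land,\circ$ and covers the asymmetric subcases ``without loss of generality''. So as written the rule list gives you no route here; this is a lacuna you share with the paper, not an idea you missed.

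The simplest repair is a replacement rule: if $\vdash\alpha\iff\beta$ then $\circ\alpha\vdash\circ\beta$.
\begin{enumerate}
\item It is sound by your own swap argument: a valid $\alpha\iff\beta$ takes value $(1,1)$ under every valuation, so $v(\alpha)=v(\beta)$.
\item It gives commutativity under $\circ$.
\item Together with (BR), it reduces $\circ(\psi\lor\chi)$ to $\circ(\lnot\psi\land\lnot\chi)$.
\end{enumerate}
Your $\land$ argument uses only the values of the two conjuncts. Applying it to $\lnot\psi$ and $\lnot\chi$, whose values are fixed by the $\lnot$ step, then settles the $\lor$ case.
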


Before proving this result, we should introduce the notion of a maximal consistent set and present some properties of it.

\begin{definition}
    Let $\Gamma \subseteq \For(\cl^\circ)$. We say that $\Gamma$ is a consistent set in the \B system if there is a formula $\varphi$ such that $\Gamma \not \vdash \varphi$.
\end{definition}

\begin{definition}
    Let $\Sigma$ be a consistent set in the \B-system. We say that $\Gamma$ is a maximally consistent \B-extension of $\Sigma$ if
    \begin{enumerate}
        \item $\Sigma \subseteq \Gamma$.
        \item $\Gamma$ is \B-consistent.
        \item $\Gamma$ is closed under the \B-rules.
        \item $\Gamma$ is maximal, i.e. $\Gamma \subseteq \Lambda$ and $\Lambda$ consistent implies $\Gamma = \Lambda$.
    \end{enumerate}
\end{definition}

\begin{lemma}
    Let $\Gamma$ be a maximally consistent \B-extension, then $\varphi \in \Gamma$ or $\lnot \varphi \in \Gamma$ for every formula $\varphi$ in $\For(\Lg^\circ)$.
\end{lemma}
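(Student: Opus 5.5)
The plan is to argue by contradiction, using maximality together with classical reasoning, which is available through axiom (CL). Suppose that $\varphi \notin \Gamma$ and $\lnot \varphi \notin \Gamma$. Consider the sets $\Gamma \cup \{\varphi\}$ and $\Gamma \cup \{\lnot \varphi\}$. Each properly extends $\Gamma$, so by maximality (clause (iv) of the definition) neither can be consistent. Hence $\Gamma \cup \{\varphi\} \vdash \psi$ and $\Gamma \cup \{\lnot \varphi\} \vdash \psi$ for every formula $\psi$.

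The key step is to turn these two derivations into a derivation from $\Gamma$ alone. I would use the deduction theorem for $\vdash$: from $\Gamma, \varphi \vdash \psi$ and $\Gamma, \lnot\varphi \vdash \psi$ we obtain $\Gamma \vdash \varphi \so \psi$ and $\Gamma \vdash \lnot\varphi \so \psi$. By classical proof by cases together with excluded middle $\vdash \varphi \lor \lnot \varphi$, both available in (CL), it follows that $\Gamma \vdash \psi$ for every $\psi$. This contradicts the consistency of $\Gamma$. (Alternatively, one can skip the cases argument: take $\psi = \bot$, or derive $\lnot\varphi$ from $\Gamma,\varphi \vdash \lnot\varphi$, then $\Gamma \vdash \lnot\varphi$ classically. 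Closure under the rules gives $\lnot\varphi\in\Gamma$, a contradiction.)

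The main obstacle is justifying the deduction theorem in the presence of the non-classical rules. Rules such as (BR), (BF), (AwB), (NwB), (NB), (TNB1), (TNB2), (BC) and (OV) are finitary consequence rules from premises. Each can therefore be replaced by the corresponding implicational theorem (e.g.\ $\vdash \circ\varphi \land \circ\psi \so \circ(\varphi\land\psi)$), and with that replacement they pose no problem for the induction on derivation length.

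The delicate rule is (IB), which is stated only for theorems ($\vdash \varphi$ implies $\vdash \circ\varphi$). I would read it, like necessitation in modal logic, as applicable only to premise-free derivations. In the induction, an application of (IB) then yields a theorem $\circ\varphi$, and $\Gamma \vdash \varphi \so \circ\varphi$ follows trivially by weakening. With this reading the deduction theorem holds, and the argument above goes through; the final step uses closure of $\Gamma$ under the rules (clause (iii)) to pass from $\Gamma \vdash \lnot\varphi$ to $\lnot\varphi \in \Gamma$.
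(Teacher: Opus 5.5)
This is essentially the paper's argument. The paper's one line, ``$\Gamma\cup\{\varphi\}$ is consistent (by classical rules)'', is exactly your alternative route: inconsistency of $\Gamma\cup\{\varphi\}$ would give $\Gamma\vdash\lnot\varphi$, hence $\lnot\varphi\in\Gamma$ by closure. Your explicit handling of the deduction theorem, with (IB) confined to theorems as soundness requires ($p\nvDash\circ p$), spells out what the paper leaves tacit.

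The only soft spot is the word ``therefore''. Finitarity alone does not turn a premise-rule into an implicational theorem (necessitation is finitary too), so that replacement is really a reading of the axiomatization. The reading is harmless, since each rule is sound and the ultrafilter makes $\Gamma,\alpha\vDash\beta$ equivalent to $\Gamma\vDash\alpha\so\beta$. It is also the same assumption the paper makes implicitly.
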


\begin{proof}
    Suppose that $\varphi \notin \Gamma$ and $\lnot \varphi \notin \Gamma$. Then $\Gamma\cup\{\varphi\}$ is consistent (by classical rules), which contradicts the fact that $\Gamma$ is maximal.
\end{proof}

\begin{definition}
    Let $A = \pair{1, 0, a, -a}$ be a four valued Boolean algebra such that $a$ is in the ultrafilter $U$ and let $\Gamma$ be a maximally consistent \B-extension, we define the $A$-canonical valuation $v:Var\to A$ with respect to $\Gamma$ by
    \begin{enumerate}
        \item $v(X) = 1$ when $X \in \Gamma$ and $\circ X \in \Gamma$.
        \item $v(X) = 0$ when $X \notin \Gamma$ and $\circ X \in \Gamma$.
        \item $v(X) = a$ when $X \in \Gamma$ and $\circ X \notin \Gamma$.
        \item $v(X) = -a$ when $X \notin \Gamma$ and $\circ X \notin \Gamma$.
    \end{enumerate}
\end{definition}

\begin{lemma}\label{conservation-values-10-classical}
    Let $A = \pair{1, 0, a, -a}$ be a four valued Boolean algebra such that $a \in U$, let $\Gamma$ be a maximally consistent \B-extension and $v$ the $A$-canonical valuation for $\Gamma$, then, for every formula $\varphi$,
    \begin{enumerate}
        \item $v(\varphi) = 1$ if, and only if, $\varphi \in \Gamma$ and $\circ \varphi \in \Gamma$.
        \item $v(\varphi) = 0$ if, and only if, $\varphi \notin \Gamma$ and $\circ \varphi \in \Gamma$.
        \item $v(\varphi) = a$ if, and only if, $\varphi \in \Gamma$ and $\circ \varphi \notin \Gamma$.
        \item $v(\varphi) = -a$ if, and only if, $\varphi \notin \Gamma$ and $\circ \varphi \notin \Gamma$.
    \end{enumerate}
\end{lemma}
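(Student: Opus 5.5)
The plan is to argue by induction on the complexity of $\varphi$. First observe that on both sides of the four equivalences the conditions are exhaustive and pairwise exclusive: $v(\varphi)$ takes exactly one of the values $1,0,a,-a$, and exactly one of the four combinations of ``$\varphi\in\Gamma$ or not'' and ``$\circ\varphi\in\Gamma$ or not'' holds. Hence it suffices to prove the left-to-right direction of each clause; the converses follow automatically. Throughout I use freely that $\Gamma$ is closed under classical logic, that $\lnot\psi\in\Gamma$ iff $\psi\notin\Gamma$ (by the previous lemma and consistency), and that $\Gamma$ is closed under the \B-rules.

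\textbf{Base and unary cases.} The base case $\varphi=X$ is exactly the definition of the $A$-canonical valuation.

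For $\lnot\varphi$, we have $v(\lnot\varphi)=-v(\varphi)$, which swaps $1\leftrightarrow 0$ and $a\leftrightarrow -a$. On the syntactic side, $\lnot\varphi\in\Gamma$ iff $\varphi\notin\Gamma$, and by (BR) $\circ\lnot\varphi\in\Gamma$ iff $\circ\varphi\in\Gamma$. So the induction hypothesis transfers directly.

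For $\circ\varphi$, the value $v(\circ\varphi)$ is always $1$ or $0$, and by (DB) $\circ\circ\varphi\in\Gamma$ always holds. By the induction hypothesis, $v(\varphi)\in\{1,0\}$ iff $\circ\varphi\in\Gamma$. This gives $v(\circ\varphi)=1$ exactly when $\circ\varphi,\circ\circ\varphi\in\Gamma$, and $v(\circ\varphi)=0$ exactly when $\circ\varphi\notin\Gamma$ but $\circ\circ\varphi\in\Gamma$.

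\textbf{Conjunction.} Here I split into cases on the pair $(v(\varphi),v(\psi))$, using commutativity of $\land$ (classically available inside $\circ$ only via the rules, so I will invoke each rule in the orientation needed). Membership of $\varphi\land\psi$ is always settled classically; the work is deciding whether $\circ(\varphi\land\psi)\in\Gamma$.
\begin{itemize}
\item If one conjunct has value $0$, (NwB) gives $\circ(\varphi\land\psi)\in\Gamma$, matching value $0$.
\item If $v(\varphi)=1$ and $\circ\psi\in\Gamma$, (BF) gives $\circ(\varphi\land\psi)\in\Gamma$.
\item If $v(\varphi)=1$ and $v(\psi)=a$, then $\varphi\land\psi\in\Gamma$. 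If also $\circ(\varphi\land\psi)\in\Gamma$, then (BC) would yield $\circ\psi\in\Gamma$, a contradiction.
\item If $v(\varphi)=1$ and $v(\psi)=-a$, apply (NB) to $\lnot\circ\psi,\circ\varphi$, and kill the disjunct $\lnot\circ(\varphi\lor\psi)$ using (AwB) on $\varphi,\circ\varphi$.
\item The case $(a,a)$ is (TNB1) and the case $(-a,-a)$ is (TNB2).
\item In the case $(a,-a)$, the formulas $\varphi\iff\lnot\psi$ and $\circ\varphi\iff\circ\psi$ (both balls absent) lie in $\Gamma$, so (OV) gives $\circ(\varphi\land\psi)$, matching $a\cdot(-a)=0$.
\end{itemize}

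\textbf{Disjunction, the expected main obstacle.} I would do the dual case analysis.
\begin{itemize}
\item A disjunct of value $1$ gives $\circ(\varphi\lor\psi)$ by (AwB).
\item If $v(\varphi)=0$ and $\circ\psi\in\Gamma$, use (BF).
\item If $v(\varphi)=0$ and $\circ\psi\notin\Gamma$, use (NB) together with (NwB), which kills the $\land$-disjunct.
\end{itemize}
The delicate cases are $(a,a)$, $(-a,-a)$ and $(a,-a)$: the listed rules speak mainly about $\land$, and no rule states that $\circ$ respects classically provable equivalences. My first attempt would be to derive, from (OV), (BF) and (BC) applied to $\lnot\varphi,\lnot\psi$ together with (BR), the needed facts about $\circ(\varphi\lor\psi)$. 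For instance, in the case $(a,-a)$ I would combine $\circ(\varphi\land\psi)$ with the pair $\varphi,\lnot\psi$ to force $\circ(\varphi\lor\psi)$, arguing by contradiction through maximality. If that fails, I would establish an auxiliary lemma that $\circ\alpha\iff\circ\beta$ is derivable whenever $\alpha\iff\beta$ is a classical tautology, at least for De Morgan dualities, so that the disjunction case reduces to the conjunction and negation cases already handled.
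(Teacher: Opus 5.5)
Your route is the paper's: induction on $\varphi$, with (BR) for $\lnot$, (DB) for $\circ$, and a case split on $(v(\varphi),v(\psi))$ for $\land$ using the same rules. Your remark that only the left-to-right implications need proof (both sides are exhaustive and exclusive) is correct. The gap is the one you half-flag.

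\textbf{Orientation.} The rules (AwB), (NwB) and (NB) are one-sided: the formula carrying the hypothesis is always the \emph{first} argument. Your promise to invoke each rule ``in the orientation needed'' cannot be kept for the ordered pairs $(1,-a)$, $(-a,1)$, $(a,0)$ and $(-a,0)$.
\begin{itemize}
\item For $(1,-a)$, (NB) must be instantiated with $\psi$ first and yields $\lnot\circ(\psi\land\varphi)\lor\lnot\circ(\psi\lor\varphi)$. (AwB) on $\varphi,\circ\varphi$ yields $\circ(\varphi\lor\psi)$. Nothing cancels, and nothing is said about $\circ(\varphi\land\psi)$.
\item When the false conjunct is the second one, (NwB) gives only $\circ(\psi\land\varphi)$.
\item The same mismatch affects your disjunction cases ``second disjunct of value $1$'' and ``$v(\varphi)=0$, $\circ\psi\notin\Gamma$''.
\end{itemize}

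\textbf{Disjunction.} The delicate disjunction cases are left open, and your first attempt has nothing to run on. No single rule converts $\circ(\varphi\land\psi)$ or $\circ(\lnot\varphi\land\lnot\psi)$ into $\circ(\varphi\lor\psi)$: (BR) links $\circ\chi$ only with $\circ\lnot\chi$, and $\lnot(\varphi\lor\psi)$ is a different formula from $\lnot\varphi\land\lnot\psi$. The paper's proof does not supply this either. It treats only $\lnot,\land,\circ$ and disposes of the mirror-image cases ``without loss of generality'', so it relies on the same unstated symmetry.

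\textbf{The missing step.} You need an actual proof of your fallback lemma, and one is available from the listed rules in the form the induction needs: if $\vdash\alpha\leftrightarrow\beta$, then $\circ\alpha\in\Gamma$ iff $\circ\beta\in\Gamma$ for every maximally consistent rule-closed $\Gamma$.
\begin{itemize}
\item Suppose $\circ\alpha\in\Gamma$ and $\lnot\circ\beta\in\Gamma$. By (BR), $\circ\lnot\alpha\in\Gamma$.
\item (NB) applied to $\lnot\circ\beta,\circ\lnot\alpha$ puts $\lnot\circ(\beta\land\lnot\alpha)\lor\lnot\circ(\beta\lor\lnot\alpha)$ in $\Gamma$.
\item But $\vdash\lnot(\beta\land\lnot\alpha)$ and $\vdash\beta\lor\lnot\alpha$. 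So (IB), followed by (BR) in the first case, makes $\circ(\beta\land\lnot\alpha)$ and $\circ(\beta\lor\lnot\alpha)$ theorems, and $\Gamma$ is inconsistent.
\end{itemize}

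\textbf{How this closes the proof.} Commutativity of $\land$ and $\lor$ under $\circ$ now repairs every orientation problem. Disjunction then needs no separate case analysis. Put $\chi:=\lnot\varphi\land\lnot\psi$.
\begin{itemize}
\item Your negation step gives the claim for $\lnot\varphi$ and $\lnot\psi$; your conjunction step then gives it for $\chi$. Both steps use only the claim for immediate subformulas, so applying them to the more complex $\chi$ is legitimate.
\item $\varphi\lor\psi\in\Gamma$ iff $\chi\notin\Gamma$.
\item $\circ(\varphi\lor\psi)\in\Gamma$ iff $\circ\lnot(\varphi\lor\psi)\in\Gamma$ iff $\circ\chi\in\Gamma$, by (BR) and the congruence applied to $\vdash\lnot(\varphi\lor\psi)\leftrightarrow\chi$.
\item $v(\varphi\lor\psi)=-v(\chi)$.
\end{itemize}
So the four clauses for $\varphi\lor\psi$ follow from those for $\chi$. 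In your case $(a,-a)$, for instance, this is exactly (OV) applied to $\lnot\varphi,\lnot\psi$, transported by the congruence.
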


\begin{proof}
    We prove this by induction.

    For variables, it holds by definition.
    
    Suppose $\varphi$ is $\lnot \alpha$.
    \textbf{(i)} If $v(\varphi) = 1$, then $v(\alpha) = 0$. By IH, $\alpha \notin \Gamma$ and $\circ \alpha \in \Gamma$. Since $\Gamma$ is maximal, $\lnot \alpha \in \Gamma$; from \ref{ball-rule}-BR, $\circ \lnot \alpha \in \Gamma$. If, on the other hand, $\lnot \alpha \in \Gamma$ and $\circ \lnot \alpha \in \Gamma$, then $\circ \lnot \alpha \in \Gamma$ from \ref{ball-rule}-BR and $\alpha \notin \Gamma$ since $\Gamma$ is consistent. Consequently, $v(\alpha) = 0$ from IH and therefore $v(\lnot \alpha) = 1$.
    \textbf{(ii)} If $v(\varphi) = a$, then $v(\alpha) = -a$. By IH, $\alpha \notin \Gamma$ and $\circ \alpha \notin \Gamma$. Thus, $\neg \alpha \in \Gamma$. From \ref{ball-rule}-BR, $\circ \lnot \alpha \in \Gamma$ implies $\circ \alpha \in \Gamma$; consequently, from $\circ \alpha \notin \Gamma$, we obtain $\circ \lnot \alpha \notin \Gamma$. Now, if $\lnot \alpha \in \Gamma$ and $\circ \lnot \alpha \notin \Gamma$, then $\alpha \notin \Gamma$ since $\Gamma$ is consistent and $\circ \alpha \notin \Gamma$ from \ref{ball-rule}-BR. From IH, we obtain $v(\alpha) = -a$ and thus $v(\lnot \alpha) = a$.
    For the cases \textbf{(iii)} $v(\varphi) = 0$ and \textbf{(iv)} $v(\varphi) = -a$, the proof is similar.

    Suppose $\varphi$ is $\alpha \land \beta$. 
    \textbf{(i)} If $v(\alpha \land \beta) = 1$, then $v(\alpha) = 1$ and $v(\beta) = 1$. From IH, we obtain $\alpha \in \Gamma$ and $\beta \in \Gamma$; also from IH, we obtain $\circ \alpha \in \Gamma$ and $\circ \beta \in \Gamma$. From \ref{lc}-CL, we have $\alpha \land \beta \in \Gamma$ and, from \ref{ball-factorization}-BF, we have $\circ (\alpha \land \beta) \in \Gamma$. If $\alpha \land \beta \in \Gamma$ and $\circ (\alpha \land \beta) \in \Gamma$, then $\circ \alpha \land \circ \beta \in \Gamma$ from \ref{ball-conjunction}-BC. Additionally, we obtain $\alpha \in \Gamma$, $\beta \in \Gamma$, $\circ \alpha \in \Gamma$ and $\circ \beta \in \Gamma$ from \ref{lc}-CL. Hence $v(\alpha) = 1$ and $v(\beta) = 1$ from IH. 
    \textbf{(ii)} If $v(\alpha \land \beta) = 0$, then (ii-a) $v(\alpha) = -v(\beta) = a$, (ii-b) $v(\beta) = - v(\alpha) = a$, (ii-c) $v(\alpha) = 0$ or (ii-d) $v(\beta) = 0$. In case we have (ii-a), IH give us $\alpha \in \Gamma$ and $\beta, \circ \alpha, \circ \beta \notin \Gamma$. Then we obtain $\lnot \beta \in \Gamma$ and 
    $\circ \alpha \iff \circ \beta \in \Gamma$. 
    Consequently, that $\alpha \iff \lnot \beta \in \Gamma$. From \ref{opposite-value}-OV, $\circ (\alpha \land \beta) \in \Gamma$. From $\lnot \beta \in \Gamma$, we obtain $\alpha \land \beta \notin \Gamma$. In case we have (ii-b), the same argument applies. In case (ii-c), the IH give us $\alpha \notin \Gamma$ and $\circ \alpha \in \Gamma$. This already give us $\alpha \land \beta \notin \Gamma$ from \ref{lc}-LC. Then, from \ref{negating-ball-rule}-NwB, we have $\circ (\alpha \land \beta) \in \Gamma$. Finally, we obtain (ii-d) using the same strategy.  Now suppose $\alpha \land \beta \notin \Gamma$ and $\circ (\alpha \land \beta) \in \Gamma$. Towards contradiction, assume $v(\alpha \land \beta) \neq 0$. If $v(\alpha \land \beta) = 1$, then we obtain (from IH) $\alpha, \beta \in \Gamma$ and, consequently, the absurd $\alpha \land \beta \in \Gamma$. If $v(\alpha \land \beta) = a$, then we again use IH to obtain $\alpha, \beta \in \Gamma$; this leads to the same absurd conclusion. Lastly, consider $v(\alpha \land \beta) = -a$. If both $v(\alpha)$ and $v(\beta)$ are $-a$, we obtain from IH that $\lnot \circ \alpha \in \Gamma$ and $\lnot \circ \beta \in \Gamma$. From \ref{ball-rule}-BR and $\Gamma$'s maximality, we conclude $\lnot \circ \lnot \alpha \in \Gamma$ and $\lnot \circ \lnot \beta \in \Gamma$. Since we also obtain $\lnot (\alpha \lor \beta) \in \Gamma$, then we obtain from \ref{two-not-ball-rule}-TNB the absurd $\lnot \circ (\alpha \land \beta) \in \Gamma$. Now if (without loosing generality) $v(\alpha)=-a$ and $v(\beta) = 1$, then $\lnot \circ \alpha, \circ \beta \in \Gamma$. Hence, from \ref{affirming-ball-rule}-AwB and \ref{not-ball-rule}-NB, we obtain the absurd $\lnot \circ (\alpha \land \beta) \in \Gamma$.
    \textbf{(iii)} If $v(\alpha \land \beta) = a$, then (without loss of generality) $v(\alpha) = a$ and $v(\beta)$ is $1$ or $a$. From IH, $\alpha, \beta \in \Gamma$ and $\lnot \circ \alpha \in \Gamma$. Using \ref{not-ball-rule}-NB,\ref{two-not-ball-rule}-TNB and \ref{affirming-ball-rule}-AwB we obtain that $\alpha, \beta, \lnot \circ \alpha \vdash \lnot \circ(\alpha \land \beta)$; thence $\alpha \land \beta \in \Gamma$ and $\lnot \circ (\alpha \land \beta) \in \Gamma$. Now, if $\alpha \land \beta \in \Gamma$ and $\lnot \circ (\alpha \land \beta) \in \Gamma$, then we cannot have $\circ \alpha, \circ \beta \in \Gamma$ because of factorization (\ref{ball-factorization}-BF). So (without loss of generality) $\lnot \circ \alpha \in \Gamma$ from maximality of $\Gamma$. IH gives us that $v(\alpha) = a$; also, since $\beta \in \Gamma$, we obtain $v(\beta) = a$ or $1$. Therefore $v(\alpha \land \beta) = a$. 
    \textbf{(iv) }If $v(\alpha \land \beta) = -a$, then (without loss of generality) we may assume $v(\alpha) = - a$. From IH, $\lnot \alpha, \lnot \circ \alpha \in \Gamma$. For $\beta$ we have two options: (iv-a) $v(\beta) = 1$ or (iv-b) $v(\beta) = -a$. In case (iv-a), we obtain from IH that $\circ \beta, \beta \in \Gamma$. Subsequently, combining \ref{not-ball-rule}-NB and \ref{affirming-ball-rule}-AwB, we have $\lnot \circ (\alpha \land \beta)$. In case (iv-b), we obtain $\lnot \beta, \lnot \circ \beta \in \Gamma$ and that $\alpha \land \beta \notin \Gamma$. Then we have $\lnot (\alpha \lor \beta) \in \Gamma$ and, consequently, using \ref{two-not-ball-rule}-TNB, that $\circ (\alpha \land \beta) \notin \Gamma$. On the other hand, if $\alpha \land \beta \notin \Gamma$ and $\circ (\alpha \land \beta) \notin \Gamma$, then at least one of $\alpha$ or $\beta$ is in $\Gamma$. Without loss of generality, assume $\alpha \notin \Gamma$. Suppose that $v(\alpha \land \beta) \neq a$. In case $v(\alpha \land \beta)$ is $1$ or $a$, then we conclude from IH the absurd $\alpha, \beta \in \Gamma$ since possible values for $\alpha$ and $\beta$ are either both $1$ or one of then is $a$ and the other is $a$ or $1$. So lastly we consider $v(\alpha \land \beta) = 0$; from IH and $\alpha \notin \Gamma$, $v(\alpha)$ is $1$ or $a$. In the first case, we obtain $\circ \alpha$, and from \ref{negating-ball-rule}-NwB we then obtain the absurd $\circ(\alpha \land \beta) \in \Gamma$. In the second case, we obtain that $v(\beta) = a$ and, from IH, that $\beta, \lnot \circ \beta \in \Gamma$. From \ref{opposite-value}-OV, we obtain the absurd $\circ (\alpha \land \beta) \in \Gamma$.

    Suppose $\varphi$ is $\circ \alpha$. Before inspecting the cases, note that $v(\circ \alpha)$ can only be $1$ or $0$ from the definition.
    \textbf{(i)} If $v(\circ \alpha) = 1$, then $v(\alpha)$ is $1$ or $0$; in both cases, the IH gives us $\circ \alpha \in \Gamma$. From \ref{double-ball}-DB, $\circ \circ \alpha \in \Gamma$, completing this case. On the other hand, suppose $\circ \alpha \in \Gamma$. If $v(\circ \alpha) = 0$, then $v(\alpha)$ if $a$ or $-a$; in both cases, IH gives us that $\circ \alpha \notin \Gamma$, which is absurd. Therefore, $v(\circ \alpha) = 1$. Now, \textbf{(ii)} if $v(\circ \alpha) = 0$, we obtain $\circ \alpha \notin \Gamma$. But again we have $\circ \circ \alpha \in \Gamma$ from \ref{double-ball}-DB. Conversely, if $\circ \alpha \notin \Gamma$ and $\circ \circ \alpha \Gamma$, then $v(\alpha) = a$ or $v(\alpha) = -a$ from IH. Consequently, $v(\circ \alpha) = 0$ as desired. Note that no other case need to be considered. First, the value of $\circ \alpha$ can only be $1$ or $0$; second, it is never the case that $\circ \circ \alpha \notin \Gamma$ since $\Gamma$ is consistent.
\end{proof}

\begin{proof}[Proof of Theorem \ref{sound-complete-10-classic}]
    The proof of soundness is obtained by simply observing that each rule of the formal system is obtained semantically. We leave this to the interested reader.
    
    We prove completeness. Counterpositively, we suppose $\Gamma \nvdash \varphi$. Then we find that $\Sigma \cup \{\lnot \varphi\}$ is consistent. Let $\Lambda$ be a maximally consistent extension of $\Gamma$ and $v$ the $A$-canonical valuation with respect to $\Lambda$. From Lemma~\ref{conservation-values-10-classical}, $\alpha \in \Lambda$ if and only if $v(\alpha)$ is $1$ or $a$, with $U = \{1, a\}$. Thus $v(\Lambda) \subset U$ and hence $v(\lnot \varphi) \in U$. It follows that $v(\varphi) \notin U$ and thus $v$ exemplifies $\Gamma \nvDash \varphi$.
    
\end{proof}

\section{Many-logics modal logic}\label{MLML}

In \cite{freire_modality_2024}, we address the challenge of combining modal worlds operating under different logical systems. We propose a framework in which modal structures are connected via a common lattice, enabling the evaluation of modal formulas across worlds with distinct logics. The truth value of a modal formula in a world $w$ is determined by relativizing the values of other logics to the lattice of $w$, using the ordering of the base lattice as a reference. In this section, we review the foundational notions underlying this approach.

The language $\Lg^\Box$ is composed of a set $Var$ of denumerably many sentential letters $p_1, p_2, \dots$, the unary
connective  $\neg$ and $\Box$ and the binary connectives $\land$, $\lor$, 
and parentheses. The set of formulas of $\Lg^\Box$ is recursively defined in the usual way and will be denoted by $\For(L^\Box)$.

A {\it  filtered matrix $\Lg$-lattice} is a pair $\pair{{\bf L},{\rm D}}$, where 
\begin{itemize}
\item 
$ {\bf L}=\pair{L,.,+,-}$ such that  $\pair{{L},.,+}$ is a lattice 
$-$ is an unary operation over $L$ 
(in some cases, we should impose conditions about $-$),
\item ${\rm D} \subseteq L$. 
\end{itemize}
We denote by $SubLat(\pair{{L},.,+})$ the set of all complete sublattices of $\pair{{L},.,+}$. Each $\mathbf{L_0}\in SubLat(\pair{{L},.,+})$ induces a submatrix of $\pair{{\bf L},{\rm D}}$, namely $\pair{{\bf L_0},{\rm D}\cap L_0}$ (note that this induced matrix can be degenerate in the sense that ${\rm D}\cap L_0$ can be empty). 

In what follows, we fix a subset $LAT$ of $SubLat(\pair{{L},.,+}$.

\begin{definition}[Many-logics modal  frames] 
		 A \textbf{many-logics modal  frame} $F$ (over $LAT$ and $\Lg^\Box$) is a tuple $\pair{W, R, I}$ such that
	\begin{enumerate} 
 \item $W$ is a non empty set.
 \item $R \subseteq W \times W$ is a relation from worlds to worlds.
\item $I:W\to LAT$ assigns a lattice to each world. We denote the lattice $I(w)$  by ${\bf L_w}$ and the induced matrix by $\pair{{\bf L_w},{\rm D_w}}$. 
\end{enumerate}
    \end{definition}
    
As in the standard case, if we add a valuation to a frame, we obtain a modal structure.

\begin{definition}[Many-logic modal  structures -- \cite{freire_modality_2024}] 
		 A \textbf{many-logic modal  structure} $M$ (over $LAT$ and $\Lg^\Box$) is a tuple $\pair{W, R, I,v}$ such that
	\begin{enumerate} 
 \item $\pair{W, R, I}$ is a frame (called the frame of $M$)
\item   $v: W \times Var \longrightarrow L$ is a valuation function such that $v(w,p)\in I(w)$, for $w\in W$ and $p\in Var$. We write $v_w(p)$ for $v(w,p)$.
	\end{enumerate}
	\end{definition}

To extend a valuation $v_w$  to all formulas of $\Lg^\Box$, we should define a way to interpret values in a complete sublattice $L' \subseteq L$ not necessarily in $L'$.

\begin{definition}[Down- and up- interpretations] \label{def.down.int} 
	Let $L$ be a lattice and $L'$a complete sublattice of $L$. 

    \begin{enumerate}
        \item The {\bf down-interpretation} of the value $x \in L$ in a lattice $L'$ is defined as follows: $x_{down}^{L'} = \bigvee_{L'} \{y \in L' :  y \leqslant x \} $, if $\{y \in L' :  y \leqslant x \} = \emptyset$, then $x^{L'}$ is the least value in $L'$. 
        \item The {\bf up-interpretation} of the value $x \in L$ in a lattice $L'$ is defined as follows: $ x_{up}^{L'} = \bigwedge_{L'} \{y \in L' :  y \leqslant x \} $, if $\{y \in L' :  y \leqslant x \} = \emptyset$, then $x^{L'}$ is the least value in $L'$. 
    \end{enumerate}
\end{definition}

Note that if $a\in L'$, then $ a_{down}^{L'}=a_{up}^{L'}=a$. In what follows, we will adopt the down interpretation and we just write $ a^{L'}$ for $a_{down}^{L'}$ .
Now, for each world $w$ in a many-logic modal structure $M$, 
the valuation $v_w$ is extended to all formulas of $\Lg^\Box$ in the following way. 

\begin{definition} \label{def.valuation.mlms} 
Let $M$ be a \mlms $M = \pair{W,R,{I},v}$.  For each $w\in W$, the valuation $v_w$ induced by $M$ is the function $v_{w}: \For(\mathcal{L}^\Box) \longrightarrow L_{w}$ such that: 
		\begin{enumerate}
			\item $v_{w}(p) = v(w, p)$, where $p$ is a proposition variable,
			\item $v_{w}(\lnot \alpha) = (- v_{w}(\alpha))^{L_{w}}$,
			\item $v_{w}(\alpha \lor \beta) = (v_{w}(\alpha) + v_{w}(\beta))^{L_{w}}$,
			\item $v_{w}(\alpha \land \beta) = (v_{w}(\alpha).v_{w}(\beta))^{L_{w}}$,
			%\item $v_{w}(\alpha \so \beta) = (v_{w}(\alpha)\Rightarrow v_{w(\beta))}^{L_{w}}$,
                \item $v_{{w}}(\Box \alpha)=\bigwedge\limits_{L_{{w}}}\{(v_{u}(\alpha))^{L_{{w}}} \mid {w} R u \land u \in W\}$,
                \item $v_{w}(\Diamond \alpha) = (- v_{w}(\Box \lnot \alpha))^{L_{w}}$.
 	\end{enumerate}

We say that a formula $\varphi$ holds in a model $M$ in a world $w$, i.e. $M, w \vDash \varphi$ (or simply $w \vDash \varphi$ when $M$ is clear from context), if $v_w(\varphi) \in \mathrm{D}_w$. A formula $\varphi$ is valid in $M = \pair{W, R, I, v}$, i.e. $M \vDash \varphi$, if $M, w \vDash \varphi$ for every $w \in W$. Finally, $\varphi$ is valid in a frame $F = \pair{W, R, I}$, i.e. $F \vDash \varphi$, if $\varphi$ is valid in every model $M$ based on frame $F$.
\end{definition}

In \cite{freire_modality_2024}, we discuss the non-normality of the consequence relation in many-logic modal logic. We also explore novel kinds of frame that incorporate to the accessibility relation the change of background logic; for instance, cases where accessibility is classically increasing or classically decreasing. In \cite{freire_institution_2025}, the many-logic modal logic framework is integrated into the theory of institutions. There, we define relations between models that preserve specific logical properties, enriching the formal specification of systems involving multiple logics. In \cite{freire_lattices_2025}, we address the problem of integrating multiple logics with lattice semantics into a single meaningful lattice that allows for the implementation of the many-logic modal logic framework. We propose a method for constructing a common lattice—based on direct product construction that enables meaningful communication between different logical systems.

In terms of application, \cite{martins_many-logic_2025} employs many-logic modal structures (MLMS) to model information systems that involve diverse logical foundations. These structures use lattices to establish connections between worlds operating in different logics, thereby facilitating the representation of some complex scenarios, including paraconsistent and paracomplete contexts. The framework is illustrated using a six-valued logic of evidence and truth, whose sublattices support a variety of logical contexts.

\section{four-eight logics}\label{sec:four-eight-logics}

In this section, we study a four-valued logic within the framework of a many-valued modal logic, based on the eight-valued Boolean algebra $B_8^\circ$ and an ultrafilter over it. Specifically, we define a special case of a many-logic modal logic by taking $B_8^\circ$ as the underlying lattice $L$, and taking $LAT$ to be the set consisting of the \textbf{three} 4-valued subalgebras of $B_8^\circ$. These subalgebras are shown in Figure~\ref{fig-8boolean}, each represented in a different color. The set of designated values is assumed to be any ultrafilter $U$ of $B_8^\circ$. The elements of each four-valued subalgebra $B$ of $B_8^\circ$ will be written as $1,0,z,-z$, where $\{1,z\}=B\cap U$.

The consider the language $\Lg^{\Box,\circ}$ composed by a denumerably  set $Var$ of variables, the unary connectives  $\neg$, $\Box$ and $\circ$ and the binary connectives $\land$, $\lor$, 
and parentheses, built in the usual way.

We say that $\varphi$ is a semantic consequence (global) of $\Sigma$, in short, $\Sigma \models \varphi$, if any model $M$ of $\Sigma$ is such that $M \vDash \varphi$.

\begin{figure}[H]
    \centering
        
\tikzset{every picture/.style={line width=0.75pt}} %set default line width to 0.75pt        

\begin{tikzpicture}[x=0.75pt,y=0.75pt,yscale=-1,xscale=1]
%uncomment if require: \path (0,300); %set diagram left start at 0, and has height of 300

%Straight Lines [id:da9390929007098849] 
\draw    (261,32) -- (310.1,90.11) ;
\draw [shift={(310.1,90.11)}, rotate = 49.81] [color={rgb, 255:red, 0; green, 0; blue, 0 }  ][fill={rgb, 255:red, 0; green, 0; blue, 0 }  ][line width=0.75]      (0, 0) circle [x radius= 3.35, y radius= 3.35]   ;
\draw [shift={(261,32)}, rotate = 49.81] [color={rgb, 255:red, 0; green, 0; blue, 0 }  ][fill={rgb, 255:red, 0; green, 0; blue, 0 }  ][line width=0.75]      (0, 0) circle [x radius= 3.35, y radius= 3.35]   ;
%Straight Lines [id:da1813804223200528] 
\draw [color={rgb, 255:red, 0; green, 0; blue, 0 }  ,draw opacity=1 ][fill={rgb, 255:red, 0; green, 0; blue, 0 }  ,fill opacity=1 ]   (261,32) -- (210.6,90.11) ;
\draw [shift={(210.6,90.11)}, rotate = 130.94] [color={rgb, 255:red, 0; green, 0; blue, 0 }  ,draw opacity=1 ][fill={rgb, 255:red, 0; green, 0; blue, 0 }  ,fill opacity=1 ][line width=0.75]      (0, 0) circle [x radius= 3.35, y radius= 3.35]   ;
\draw [shift={(261,32)}, rotate = 130.94] [color={rgb, 255:red, 0; green, 0; blue, 0 }  ,draw opacity=1 ][fill={rgb, 255:red, 0; green, 0; blue, 0 }  ,fill opacity=1 ][line width=0.75]      (0, 0) circle [x radius= 3.35, y radius= 3.35]   ;
%Straight Lines [id:da5057452169685153] 
\draw    (261,32) -- (261.1,88.3) ;
\draw [shift={(261.1,88.3)}, rotate = 89.9] [color={rgb, 255:red, 0; green, 0; blue, 0 }  ][fill={rgb, 255:red, 0; green, 0; blue, 0 }  ][line width=0.75]      (0, 0) circle [x radius= 3.35, y radius= 3.35]   ;
\draw [shift={(261,32)}, rotate = 89.9] [color={rgb, 255:red, 0; green, 0; blue, 0 }  ][fill={rgb, 255:red, 0; green, 0; blue, 0 }  ][line width=0.75]      (0, 0) circle [x radius= 3.35, y radius= 3.35]   ;
%Straight Lines [id:da9515686466113575] 
\draw    (260.98,198.12) -- (211.31,140.5) ;
\draw [shift={(211.31,140.5)}, rotate = 229.24] [color={rgb, 255:red, 0; green, 0; blue, 0 }  ][fill={rgb, 255:red, 0; green, 0; blue, 0 }  ][line width=0.75]      (0, 0) circle [x radius= 3.35, y radius= 3.35]   ;
\draw [shift={(260.98,198.12)}, rotate = 229.24] [color={rgb, 255:red, 0; green, 0; blue, 0 }  ][fill={rgb, 255:red, 0; green, 0; blue, 0 }  ][line width=0.75]      (0, 0) circle [x radius= 3.35, y radius= 3.35]   ;
%Straight Lines [id:da6354400889514331] 
\draw    (260.98,198.12) -- (310.8,139.51) ;
\draw [shift={(310.8,139.51)}, rotate = 310.37] [color={rgb, 255:red, 0; green, 0; blue, 0 }  ][fill={rgb, 255:red, 0; green, 0; blue, 0 }  ][line width=0.75]      (0, 0) circle [x radius= 3.35, y radius= 3.35]   ;
\draw [shift={(260.98,198.12)}, rotate = 310.37] [color={rgb, 255:red, 0; green, 0; blue, 0 }  ][fill={rgb, 255:red, 0; green, 0; blue, 0 }  ][line width=0.75]      (0, 0) circle [x radius= 3.35, y radius= 3.35]   ;
%Straight Lines [id:da10773996013952014] 
\draw    (260.98,198.12) -- (260.32,141.83) ;
\draw [shift={(260.32,141.83)}, rotate = 269.33] [color={rgb, 255:red, 0; green, 0; blue, 0 }  ][fill={rgb, 255:red, 0; green, 0; blue, 0 }  ][line width=0.75]      (0, 0) circle [x radius= 3.35, y radius= 3.35]   ;
\draw [shift={(260.98,198.12)}, rotate = 269.33] [color={rgb, 255:red, 0; green, 0; blue, 0 }  ][fill={rgb, 255:red, 0; green, 0; blue, 0 }  ][line width=0.75]      (0, 0) circle [x radius= 3.35, y radius= 3.35]   ;
%Straight Lines [id:da8818861660474588] 
\draw    (210.6,90.11) -- (260.32,141.83) ;
%Straight Lines [id:da2365640135696674] 
\draw    (261.1,88.3) -- (310.82,140.01) ;
%Straight Lines [id:da20508562484642034] 
\draw    (210.6,90.11) -- (211.31,140.5) ;
%Straight Lines [id:da06522942741489413] 
\draw    (261.08,87.79) -- (211.31,140.5) ;
%Straight Lines [id:da2203548086611402] 
\draw    (310.1,90.11) -- (310.82,140.01) ;
%Straight Lines [id:da2830262634373686] 
\draw    (310.1,90.11) -- (260.32,141.83) ;
%Shape: Circle [id:dp5699746468829341] 
\draw  [color={rgb, 255:red, 208; green, 2; blue, 27 }  ,draw opacity=1 ] (202.04,90.11) .. controls (202.04,85.39) and (205.87,81.56) .. (210.6,81.56) .. controls (215.32,81.56) and (219.15,85.39) .. (219.15,90.11) .. controls (219.15,94.84) and (215.32,98.67) .. (210.6,98.67) .. controls (205.87,98.67) and (202.04,94.84) .. (202.04,90.11) -- cycle ;
%Shape: Circle [id:dp32478005193313897] 
\draw  [color={rgb, 255:red, 208; green, 2; blue, 27 }  ,draw opacity=1 ] (302.25,139.51) .. controls (302.25,134.78) and (306.08,130.95) .. (310.8,130.95) .. controls (315.53,130.95) and (319.36,134.78) .. (319.36,139.51) .. controls (319.36,144.23) and (315.53,148.06) .. (310.8,148.06) .. controls (306.08,148.06) and (302.25,144.23) .. (302.25,139.51) -- cycle ;
%Shape: Circle [id:dp9429656507768538] 
\draw  [color={rgb, 255:red, 74; green, 144; blue, 226 }  ,draw opacity=1 ] (252.54,88.3) .. controls (252.54,83.57) and (256.37,79.74) .. (261.1,79.74) .. controls (265.83,79.74) and (269.66,83.57) .. (269.66,88.3) .. controls (269.66,93.02) and (265.83,96.85) .. (261.1,96.85) .. controls (256.37,96.85) and (252.54,93.02) .. (252.54,88.3) -- cycle ;
%Shape: Circle [id:dp7884388470948431] 
\draw  [color={rgb, 255:red, 74; green, 144; blue, 226 }  ,draw opacity=1 ] (251.76,141.83) .. controls (251.76,137.1) and (255.59,133.27) .. (260.32,133.27) .. controls (265.05,133.27) and (268.88,137.1) .. (268.88,141.83) .. controls (268.88,146.55) and (265.05,150.38) .. (260.32,150.38) .. controls (255.59,150.38) and (251.76,146.55) .. (251.76,141.83) -- cycle ;
%Shape: Circle [id:dp706014102347309] 
\draw  [color={rgb, 255:red, 65; green, 117; blue, 5 }  ,draw opacity=1 ] (202.75,140.5) .. controls (202.75,135.77) and (206.58,131.94) .. (211.31,131.94) .. controls (216.03,131.94) and (219.87,135.77) .. (219.87,140.5) .. controls (219.87,145.22) and (216.03,149.05) .. (211.31,149.05) .. controls (206.58,149.05) and (202.75,145.22) .. (202.75,140.5) -- cycle ;
%Shape: Circle [id:dp9891150595447891] 
\draw  [color={rgb, 255:red, 65; green, 117; blue, 5 }  ,draw opacity=1 ] (301.54,90.11) .. controls (301.54,85.39) and (305.37,81.56) .. (310.1,81.56) .. controls (314.82,81.56) and (318.65,85.39) .. (318.65,90.11) .. controls (318.65,94.84) and (314.82,98.67) .. (310.1,98.67) .. controls (305.37,98.67) and (301.54,94.84) .. (301.54,90.11) -- cycle ;

% Text Node
\draw (253.5,3.5) node [anchor=north west][inner sep=0.75pt]   [align=left] {1};
% Text Node
\draw (253.5,208) node [anchor=north west][inner sep=0.75pt]   [align=left] {0};
\end{tikzpicture}
\caption{The Boolean algebra of 8 values and its sub algebras with 4 values.}
    \label{fig-8boolean}
\end{figure}
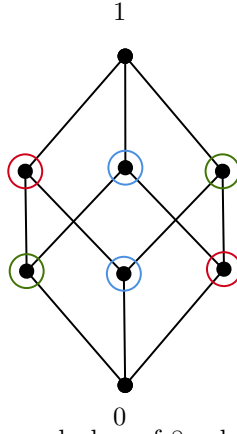

\begin{notation} The following notation will be useful in the sequel.
\begin{itemize}\item 
    We write $\Box \Sigma$ for the set $\{\Box \alpha \mid \alpha \in \Sigma\}$
    and $\circ \Sigma$ for the set $\{\circ \alpha \mid \alpha \in \Sigma\}$
\item     The symbol $\veebar$ will be used for exclusive disjunction (i.e $x \veebar y \cong (x \land \lnot y) \lor (\lnot x \land y)$)
\end{itemize}
\end{notation}

We now begin to formalize the system in a more general and systematic way. Consider the three four-valued sub-Boolean algebras of $B_8$, denoted by $B_1$, $B_2$, and $B_3$. Each of these subalgebras induces a corresponding consequence relation, which we denote by $\vdash_1$, $\vdash_2$, and $\vdash_3$, respectively. Due to the symmetry among the subalgebras, these consequence relations will ultimately coincide. However, for the sake of clarity and generality, we shall begin by analyzing them individually.

The definition will be done recursively:

\begin{enumerate}
    \item Rules and axioms of \B \; logic for the extended language.
    \item $\vdash_1 \Box (\varphi \so \psi) \so (\Box \varphi \so \Box \psi)$ for every $\varphi$ and $\psi$.
    \item $\Box \varphi, \circ \Box \varphi \vdash_1 \Box \circ \varphi$.
    \item $\Diamond \varphi, \Diamond \lnot \varphi \vdash_1 \circ \Box \varphi$
    \item $\lnot \circ \Box \varphi \vdash_1 \Diamond (\varphi \land \lnot \circ \varphi) \veebar \Diamond (\lnot \varphi \land \lnot \circ \varphi)$.
    \item $\Lambda \cup \Box \Gamma \vdash_1 \Box \varphi$ when 
        \begin{enumerate}
            \item $\Lambda \cup \Gamma \cup \circ \Gamma \vdash_2 \varphi \land \circ \varphi$,
            \item $\Lambda \cup \Gamma \cup \circ \Gamma \vdash_3 \varphi \land \circ \varphi$ and
            \item $\Lambda \cup \Gamma \vdash_1 \varphi$.
        \end{enumerate}
\end{enumerate}

    Because $(\vdash_1) = (\vdash_2) = (\vdash_3)$, definitions of $\vdash_2$ and $\vdash_3$ will be done precisely the same way. This symmetry allows us to unite the definitions of $\vdash_1$, $\vdash_2$, and $\vdash_3$ in a single consequence relation $\vdash$:

\begin{enumerate}
    \item\label{clmodal} (CLM) Rules and axioms of \B \, logic for the extended language. (Classical logic for modal language)
    \item\label{kaxiom} (KA) $\vdash \Box (\varphi \so \psi) \so (\Box \varphi \so \Box \psi)$ for every $\varphi$ and $\psi$. (K axiom)
    \item\label{ball-box} (BB) $\Box \varphi, \circ \Box \varphi \vdash \Box \circ \varphi$. (Ball and box axiom)
    \item\label{folse-certainty} (FC) $\Diamond \varphi, \Diamond \lnot \varphi \vdash \circ \Box \varphi$ (False necessity certainty)
    \item\label{existence-axiom} (EA) $\lnot \circ \Box \varphi \vdash \Diamond (\varphi \land \lnot \circ \varphi) \veebar \Diamond (\lnot \varphi \land \lnot \circ \varphi)$. (Existence axiom)
    \item\label{introduction-necessity} (IN) $\Lambda \cup \Box \Gamma \vdash \Box \varphi$ when 
        \begin{enumerate}
            \item $\Lambda \cup \Gamma \cup \circ \Gamma \vdash \varphi \land \circ \varphi$ and
            \item $\Lambda \cup \Gamma \vdash \varphi$.
        \end{enumerate}
        (Introduction of Necessity)
\end{enumerate}

\begin{theorem}\label{ktheorems}
    All theorems of the system K are preserved in $\vdash$.
\end{theorem}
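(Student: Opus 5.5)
Since the statement concerns theorems (derivations from the empty set of premises), the plan is to go by induction on the length of a Hilbert-style derivation in $K$. The system $K$ is generated by propositional tautologies, the axiom $K$, modus ponens, and the necessitation rule ($\vdash \varphi$ implies $\vdash \Box\varphi$). So it suffices to show that $\vdash$ contains each axiom of $K$ and is closed under each rule, when restricted to theorems. Formulas of $K$ live in the $\circ$-free fragment of $\Lg^{\Box,\circ}$, and every such formula is also a formula of our language, so we simply check these clauses for the language $\Lg^{\Box,\circ}$.

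The first three clauses are immediate from the axiomatization. Every substitution instance of a classical tautology, with modal formulas treated as atoms, is derivable by (CLM), because (CLM) includes the classical logic axioms and rules for the extended language (item (CL) of the \B{} system). Modus ponens is likewise among these classical rules. Finally, the $K$ axiom $\Box(\varphi\so\psi)\so(\Box\varphi\so\Box\psi)$ is exactly (KA).

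The only genuine step is \emph{necessitation}, since global $\Box$-introduction is the point where the system is deliberately non-normal (e.g. $p \nvdash \Box p$). I would apply (IN) with $\Lambda=\Gamma=\emptyset$. Then condition (b) reads $\vdash\varphi$, which is the hypothesis. Condition (a) requires $\vdash \varphi\land\circ\varphi$. Here I would use the rule (IB) from the \B{} axiomatization, which (CLM) imports: from $\vdash\varphi$ we get $\vdash\circ\varphi$, and then conjunction introduction from (CL) gives $\vdash\varphi\land\circ\varphi$. Hence (IN) yields $\vdash\Box\varphi$.

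The obstacle to watch is whether (IB) is legitimately available for formulas containing $\Box$. That is, we need "rules of \B{} logic for the extended language" to include the meta-rule (IB) applied to theorems of the full modal system $\vdash$, not merely to theorems of \B. I would read (CLM) this way, since the recursive definition of $\vdash$ treats the \B{} rules as rules of the new system. If one prefers a more cautious reading, the induction still works. Run it on derivations in $\vdash$ itself: then every theorem $\varphi$ obtained at an earlier stage is available to (IB), giving $\circ\varphi$ at a later stage. Because derivations are finite, the induction on derivation length goes through, and every $K$-theorem is a theorem of $\vdash$.
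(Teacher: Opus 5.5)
Your proof is correct and follows the paper's argument. Tautology instances and the $K$ axiom come directly from (CLM) and (KA), and necessitation comes from (IN) with $\Lambda=\Gamma=\emptyset$, using (IB) and $\land$-introduction to turn $\vdash\varphi$ into $\vdash\varphi\land\circ\varphi$; the paper applies (IB) to theorems of the full modal system exactly as in your primary reading, so your ``cautious'' fallback is not needed (it relies on that same reading anyway).
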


\begin{proof}
The axiom $K$ and all modal instances of classical tautologies are axioms of $\vdash$. Moreover, the necessitation rule is a consequence of the rule (IN), by considering $\Delta$ and $\Gamma$ to be empty sets and the fact $\vdash\varphi$ implies $\vdash \varphi \wedge\circ \varphi$ (by $\wedge$-introduction and (IB)-rule of system \B).
\end{proof}

\begin{definition}
    We say that $\Sigma$ is {\em locally inconsistent} if there are $\alpha_1, \alpha_2, \ldots, \alpha_n \in \Sigma$ such that $\vdash \lnot (\alpha_1 \land \alpha_2 \land \ldots \land \alpha_n)$. We say that $\Sigma$ is {\em locally consistent} if $\Sigma$ is not locally inconsistent.
\end{definition}

\begin{lemma}\label{proofByContradiction}
    Let $\Sigma \cup \{\lnot \varphi\}$ be a set of formulas, if $\Sigma$ is locally consistent and $\Sigma \cup \{\lnot \varphi\}$ is locally inconsistent, then $\Sigma \vdash \varphi$.
\end{lemma}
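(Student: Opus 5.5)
The plan is to unpack the definition of local inconsistency for $\Sigma \cup \{\lnot \varphi\}$ and use the classical fragment of $\vdash$ (rule (CLM), which includes all classical axioms and rules for the extended language) to move $\lnot\varphi$ to the other side.

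\textbf{Step 1: extract the finite witness.} Since $\Sigma \cup \{\lnot \varphi\}$ is locally inconsistent, there are $\alpha_1, \ldots, \alpha_n \in \Sigma \cup \{\lnot\varphi\}$ with $\vdash \lnot(\alpha_1 \land \cdots \land \alpha_n)$. If none of the $\alpha_i$ is $\lnot\varphi$, then all $\alpha_i$ lie in $\Sigma$. This contradicts the local consistency of $\Sigma$. Hence $\lnot\varphi$ occurs among the $\alpha_i$.

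\textbf{Step 2: normalize the conjunction.} Using associativity, commutativity and idempotence of $\land$, which are classical and hence available under (CLM), we may assume the witness has the form $\vdash \lnot(\beta_1 \land \cdots \land \beta_k \land \lnot\varphi)$ with $\beta_1, \ldots, \beta_k \in \Sigma$. If $k=0$, we simply have $\vdash \lnot\lnot\varphi$.

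\textbf{Step 3: derive $\varphi$ classically.} The formula
\[
\lnot(\beta_1 \land \cdots \land \beta_k \land \lnot\varphi) \so ((\beta_1 \land \cdots \land \beta_k) \so \varphi)
\]
is an instance of a classical tautology, so it is a theorem by (CLM). Modus ponens then gives $\vdash (\beta_1\land\cdots\land\beta_k)\so\varphi$. From the premises $\beta_1,\ldots,\beta_k \in \Sigma$, $\land$-introduction and modus ponens yield $\Sigma \vdash \varphi$. This uses monotonicity of $\vdash$: theorems are derivable from any set of premises.

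\textbf{Main obstacle.} The only delicate point is checking that this classical reasoning is legitimate for the global-flavoured relation $\vdash$. Rule (IN) is not closed under arbitrary premises in the naive way, so the argument must avoid it. The argument above relies only on the purely classical layer, which (CLM) guarantees is sound for premises, so this should go through. The role of the hypothesis that $\Sigma$ is locally consistent is just to force $\lnot\varphi$ to appear among the witnesses in Step 1.
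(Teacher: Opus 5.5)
Your proposal is correct and follows the paper's proof. Like the paper, you use the local consistency of $\Sigma$ to force $\lnot\varphi$ among the witnesses, then derive $\Sigma \vdash \varphi$ using only the classical layer (CLM) and modus ponens. Your treatment of the normalization step and the degenerate case $k=0$ is slightly more careful than the paper's ``without loss of generality, $\alpha_n$ is $\lnot\varphi$''.
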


\begin{proof}
    Since $\Sigma \cup \{\lnot \varphi\}$ is locally inconsistent, there are $\alpha_1, \alpha_2, \ldots, \alpha_n \in \Sigma$ such that $\vdash \lnot (\alpha_1 \land \alpha_2 \land \ldots \land \alpha_n)$. Since $\Sigma$ is locally consistent, one of the $\alpha_i$ must be $\lnot \varphi$. Without loss of generality, assume that $\alpha_n$ is $\lnot \varphi$. From the rules of classical logic, we obtain $\vdash (\alpha_1 \land \alpha_2 \land \ldots \land \alpha_{n-1}) \so \varphi$. Since $\alpha_1, \alpha_2, \ldots, \alpha_n \in \Sigma$, we obtain by \textit{Modus Ponens} that $\Sigma \vdash \varphi$.
\end{proof}

\begin{definition}
    Let $\Sigma$ be a locally consistent set of formulas. We say that $\Lambda$ is a \emph{maximal locally consistent extension of} $\Sigma$ if (i) $\Sigma \subseteq \Lambda$, (ii) $\Lambda$ is locally consistent and (iii) $\Lambda \subsetneq \Pi$ implies that $\Pi$ is locally inconsistent.
\end{definition}

\begin{lemma}\label{worldbivalence}
    Let $\Lambda$ be a maximal locally consistent extension of the locally consistent set $\Sigma$, then, for every formula $\varphi$,
    $$\varphi \in \Lambda \text{ or } \lnot \varphi \in \Lambda$$
\end{lemma}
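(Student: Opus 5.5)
We argue by contradiction, using only the definition of local (in)consistency and classical reasoning inside $\vdash$, which is available through (CLM). Suppose that neither $\varphi$ nor $\lnot \varphi$ belongs to $\Lambda$. By the maximality clause (iii), both $\Lambda \cup \{\varphi\}$ and $\Lambda \cup \{\lnot \varphi\}$ properly extend $\Lambda$, so both are locally inconsistent.

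We unpack these two facts. From the inconsistency of $\Lambda \cup \{\varphi\}$ there are $\alpha_1, \ldots, \alpha_n \in \Lambda \cup \{\varphi\}$ with $\vdash \lnot(\alpha_1 \land \cdots \land \alpha_n)$. Since $\Lambda$ itself is locally consistent, $\varphi$ must occur among the $\alpha_i$. Let $A$ be the conjunction of the remaining members, all of which lie in $\Lambda$; if there are none, take $A$ to be any classical tautology. Classical reasoning then gives $\vdash \lnot(A \land \varphi)$. Likewise, from the inconsistency of $\Lambda \cup \{\lnot\varphi\}$ we obtain a conjunction $B$ of members of $\Lambda$ with $\vdash \lnot(B \land \lnot \varphi)$.

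Since the theorems of $\vdash$ are closed under classical consequence by (CLM), these two theorems yield $\vdash \lnot(A \land B)$ via the tautology $(\lnot(A\land\varphi)\land\lnot(B\land\lnot\varphi)) \so \lnot(A\land B)$. Expanding $A \land B$ as a finite conjunction of members of $\Lambda$ shows that $\Lambda$ is locally inconsistent, contradicting clause (ii). Hence $\varphi \in \Lambda$ or $\lnot\varphi \in \Lambda$.

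The argument is essentially routine, as in the proof of the analogous lemma for \B. The only point needing care is the bookkeeping in the definition of local inconsistency. We must handle the degenerate case where the witnessing list consists of $\varphi$ (respectively $\lnot\varphi$) alone. We must also observe that reassociating, reordering and removing duplicate conjuncts preserves provability of the negation; both are classical facts guaranteed by (CLM). Alternatively, Lemma~\ref{proofByContradiction} can be applied directly: from the inconsistency of $\Lambda\cup\{\lnot\varphi\}$ it gives $\Lambda \vdash \varphi$. However, it is cleaner to stay at the level of theorems, because local consistency is defined via $\vdash$ of negated finite conjunctions rather than via derivability from $\Lambda$.
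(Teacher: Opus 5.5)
Your proof is correct and follows the paper's route. Both turn the local inconsistency of $\Lambda\cup\{\varphi\}$ and $\Lambda\cup\{\lnot\varphi\}$ into two theorems (the paper writes them as implications, and says $\Sigma$ where it means $\Lambda$), combine them classically into $\vdash\lnot(A\land B)$, and so contradict the local consistency of $\Lambda$. You also make explicit the appeal to maximality and the one-element witness lists, which the paper passes over. One corner is covered by neither version: $\Lambda=\emptyset$ (possible only if $\Sigma=\emptyset$) with both witness lists being singletons. There $\vdash\lnot(A\land B)$ just says $\vdash$ is trivial, so you need the nontriviality of $\vdash$ (e.g.\ from soundness), not the local consistency of $\Lambda$, to conclude.
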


\begin{proof}
    Suppose $\Lambda \cup \{\varphi\}$ and $\Lambda \cup \{\lnot \varphi\}$ are both locally inconsistent. Then we obtain $\alpha_1, \ldots, \alpha_n, \beta_1, \ldots, \beta_m \in \Sigma$ such that $\vdash (\alpha_1 \land \alpha_2 \land \ldots \land \alpha_n) \so \varphi$ and $\vdash (\beta_1 \land \beta_2 \land \ldots \land \beta_n) \so \lnot \varphi$. From the rules of classical logic we obtain $\vdash \lnot (\alpha_1 \land \alpha_2 \land \ldots \land \alpha_n \land \beta_1 \land \beta_2 \land \ldots \land \beta_n)$ contradicting the fact that $\Lambda$ is locally consistent. 
\end{proof}

\begin{lemma}
    For any locally consistent set $\Sigma$ there is a maximal locally consistent extension $\Lambda$ of $\Sigma$.
\end{lemma}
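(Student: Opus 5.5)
The plan is the standard Lindenbaum construction. Local (in)consistency is a finitary notion: $\Sigma$ is locally inconsistent exactly when some finite subset $\{\alpha_1,\ldots,\alpha_n\}\subseteq\Sigma$ satisfies $\vdash \lnot(\alpha_1\land\cdots\land\alpha_n)$. This finiteness is the only property we need, and it lets us use either Zorn's lemma or an explicit enumeration. Since $\VAR$ is denumerable and the language is countable, $\For(\Lg^{\Box,\circ})$ is countable, so we opt for the enumeration and write it as $\varphi_0,\varphi_1,\varphi_2,\ldots$.

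We define a chain by $\Lambda_0=\Sigma$, with
$$\Lambda_{k+1}=\Lambda_k\cup\{\varphi_k\}$$
if this set is locally consistent, and $\Lambda_{k+1}=\Lambda_k$ otherwise. We then put $\Lambda=\bigcup_k\Lambda_k$.

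By induction on $k$, every $\Lambda_k$ is locally consistent. The base case is the hypothesis on $\Sigma$, and the inductive step is built into the definition.

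Local consistency of $\Lambda$ follows from finiteness. If $\Lambda$ were locally inconsistent, the witnessing $\alpha_1,\ldots,\alpha_n$ would all lie in a single $\Lambda_k$, because the chain is increasing. Then $\Lambda_k$ would be locally inconsistent, which is a contradiction. Clearly $\Sigma\subseteq\Lambda$.

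The step requiring care is maximality. Suppose $\Lambda\subsetneq\Pi$ with $\Pi$ locally consistent, and pick $\varphi_k\in\Pi\setminus\Lambda$. Then $\Lambda_k\cup\{\varphi_k\}\subseteq\Pi$. Local consistency is inherited by subsets, since any finite witness of inconsistency in the subset is also a witness in $\Pi$. Hence $\Lambda_k\cup\{\varphi_k\}$ is locally consistent. By construction $\varphi_k\in\Lambda_{k+1}\subseteq\Lambda$, contradicting the choice of $\varphi_k$. So any proper extension of $\Lambda$ is locally inconsistent, and $\Lambda$ is a maximal locally consistent extension of $\Sigma$.

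If one prefers not to rely on countability, the same finiteness argument works with Zorn's lemma instead. Consider the family of locally consistent supersets of $\Sigma$, ordered by inclusion. This family is nonempty because it contains $\Sigma$. The union of any chain in it is again locally consistent, by the same finite-witness argument as above. A maximal element of the family is then exactly the required $\Lambda$.

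Neither version calls on any modal rule; in particular the non-standard rule (IN) plays no role. The argument uses only the definition of local inconsistency through theorems $\vdash\lnot(\cdots)$.
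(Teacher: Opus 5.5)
Your proof is correct and is the standard Lindenbaum construction that the paper's one-line proof invokes as ``the common strategy''. The only difference is that the paper also cites Lemma~\ref{worldbivalence} (presumably to add $\varphi_k$ or $\lnot\varphi_k$ at each stage). Your add-if-consistent variant gets maximality directly from the fact that local consistency passes to subsets, so it needs no such lemma. That is a small gain, since that lemma as stated already presupposes a maximal extension.
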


\begin{proof}
    For this, we need only to apply the common strategy together with the Lemma~\ref{worldbivalence}.
\end{proof}

\begin{definition}
    For a set $\Gamma$, the \emph{canonical model with respect to} $\Gamma$ is the tuple $\pair{M, R, I, v}$ such that
    \begin{enumerate}
        \item $M$ is the collection of all $\pair{i, \Lambda}$ such that $i \in \{1,2,3\}$ and $\Lambda$ is a maximal locally consistent extension of $\Gamma$.
        \item The Boolean algebra of $\pair{i, \Lambda}$ is $B_i$, that is, $I(\pair{i, \Lambda}) = B_i$.
        \item If $w_1, w_2 \in M$ and $I(w_1) = I(w_2)$, then 
        $$w_1 R w_2 \miff \forall \varphi (\Box \varphi \in w_1 \so \varphi \in w_2)$$
        \item If $w_1, w_2 \in M$ and $I(w_1) \neq I(w_2)$, then 
        $$w_1 R w_2 \miff \forall \varphi (\Box \varphi \in w_1 \so \varphi \land \circ \varphi \in w_2)$$
        \item For $w \in M$ such that $I(w) = B_i = \{1, 0, z, -z\}$, the valuation $v_w$ in $w$ is such that (we write $\varphi \in w$ for $\varphi \in \Lambda$ with $w = \pair{i, \Lambda}$):
        \begin{enumerate}
            \item $v_w(X) = 1$ when $X \in w$ and $\circ X \in w$.
            \item $v_w(X) = 0$ when $X \notin w$ and $\circ X \in w$.
            \item $v_w(X) = z$ when $X \in w$ and $\circ X \notin w$.
            \item $v_w(X) = -z$ when $X \notin w$ and $\circ X \notin w$.
        \end{enumerate}
    \end{enumerate}
\end{definition}

\begin{lemma}\label{boxnotin}
    Let $\mathcal{M} = \pair{M, R,I, v}$ be the canonical model with respect to a locally consistent set $\Gamma$. Fix any $w \in M$ and that $\Pi = \{\alpha \mid \Box \alpha \in w\}$. Then 
    $$\Box \varphi \notin w \text{ implies that } \Gamma \cup\Pi \cup \{\lnot \varphi\}\text{ is locally consistent}$$
\end{lemma}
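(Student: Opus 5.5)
We argue by contraposition: assuming that $\Gamma \cup \Pi \cup \{\lnot \varphi\}$ is locally inconsistent, we show $\Box \varphi \in w$. The natural tool is Lemma~\ref{proofByContradiction} followed by the rule (IN), with $\Lambda$ and $\Gamma$ in (IN) instantiated suitably and $\Pi$ playing the role of the boxed premises.

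First we dispose of the degenerate case. If $\Gamma \cup \Pi$ is itself locally inconsistent, there are finitely many formulas $\alpha_1,\ldots,\alpha_n \in \Gamma\cup\Pi$ with $\vdash \lnot(\alpha_1\land\ldots\land\alpha_n)$. Classically, $\vdash (\alpha_1\land\ldots\land\alpha_n)\so\varphi$ then holds for any $\varphi$. Otherwise $\Gamma\cup\Pi$ is locally consistent, and Lemma~\ref{proofByContradiction} gives $\Gamma\cup\Pi\vdash\varphi$. In both cases, by the finitary definition of local inconsistency, there are finitely many $\gamma_1,\ldots,\gamma_k\in\Gamma$ and $\pi_1,\ldots,\pi_m\in\Pi$ with
$$\vdash (\gamma_1\land\ldots\land\gamma_k\land\pi_1\land\ldots\land\pi_m)\so\varphi .$$
The point of working with this theorem is that it is a premise-free derivation, so we are free to apply (IB) to it.

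The key step is to lift this theorem to $\Box\varphi\in w$. Put $\chi := \bigwedge\gamma_j \land \bigwedge\pi_i$. Applying (IB) to $\vdash \chi\so\varphi$ gives $\vdash\circ(\chi\so\varphi)$. By (IN) with empty side sets, or equivalently by necessitation from Theorem~\ref{ktheorems}, we obtain $\vdash\Box(\chi\so\varphi)$. The K-axiom then yields $\vdash \Box\chi\so\Box\varphi$, and K-reasoning (Theorem~\ref{ktheorems}) distributes the box over the conjunction:
$$\vdash \Box\gamma_1\land\ldots\land\Box\gamma_k\land\Box\pi_1\land\ldots\land\Box\pi_m \so \Box\varphi .$$
Each $\Box\pi_i$ belongs to $w$ by the definition of $\Pi$. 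Since $w$ is a maximal locally consistent set, it is closed under theorems: if $\vdash\alpha\so\beta$ and $\alpha\in w$ but $\beta\notin w$, then $\lnot\beta\in w$ by Lemma~\ref{worldbivalence}, and $w$ would be locally inconsistent.

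The main obstacle is the $\Gamma$-part: we need $\Box\gamma_j\in w$, and this is not automatic. The issue is that the canonical model only includes worlds extending $\Gamma$, while $\Gamma$ may be an arbitrary set of premises. The cleanest fix is to eliminate $\Gamma$ altogether.

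First option: show that the implication $\vdash \bigwedge\pi_i\so\varphi$ can be obtained without $\Gamma$, which amounts to proving the statement with $\Gamma\cup\Pi$ replaced by $\Pi$. Since $\Gamma \subseteq w$, we could try to use the $\gamma_j$ as unboxed side premises via the $\Lambda$-part of (IN). That would give $\Gamma\cup\Box\Pi\vdash\Box\varphi$, provided we can also secure clause (a), namely $\Gamma\cup\Pi\cup\circ\Pi\vdash\varphi\land\circ\varphi$, and we can only get this from the $\circ$-closure obtained via (IB) on the theorem above. From there, closure of $w$ under derivations from its own members would put $\Box\varphi$ in $w$, contradicting $\Box \varphi \notin w$.

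Fallback: if the intended reading of the canonical model is that $\Gamma$ is a set of global premises whose boxes are available, use that $\Box\Gamma$-closure instead. We expect the bookkeeping in this $\Gamma$ step to be where the care lies; the rest is standard canonical-model reasoning.
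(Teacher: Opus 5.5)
You correctly locate the difficulty, but the proposal stops there: neither option supplies $\Box\gamma_j\in w$. In fact the statement is false as written, and the paper's own proof gets past this point only through an invalid step.

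\textbf{What your argument does prove.} Local inconsistency of $\Gamma\cup\Pi\cup\{\lnot\varphi\}$ gives a theorem $\vdash(\bigwedge_j\gamma_j\land\bigwedge_i\pi_i)\so\varphi$. Then (IB), (IN) with empty side sets, (KA), and closure of $w$ under theorems and modus ponens turn it into $(\bigwedge_j\Box\gamma_j\land\bigwedge_i\Box\pi_i)\so\Box\varphi\in w$. When no $\gamma_j$ occurs, this proves the $\Gamma$-free version: $\Box\varphi\notin w$ implies $\Pi\cup\{\lnot\varphi\}$ is locally consistent.

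\textbf{Why neither option closes the $\Gamma$-step.} The stated lemma needs $\Box\gamma_j\in w$, and neither option provides it.

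In the first option, (IN) with $\Lambda=\Gamma$ yields at best a derivation from premises, $\Gamma\cup\Box\Pi\vdash\Box\varphi$. A maximal locally consistent set is closed under theorems and modus ponens, but not under $\vdash$ from premises. For example, (IN) with $\Lambda=\{p,\circ p\}$ and no boxed premises gives $p,\circ p\vdash\Box p$. Yet $\{p,\circ p,\lnot\Box p\}$ is locally consistent: all three formulas are designated at a world with $v(p)=1$ that accesses a same-lattice world with $v(p)=0$, so by soundness no negated conjunction of them is a theorem.

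Clause (a) of (IN) would also require $\circ\varphi$, which you do not have. Applying (IB) to $\chi\so\varphi$ gives only $\circ(\chi\so\varphi)$, and no $\circ\gamma_j$ is available.

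The fallback changes the canonical model, so it proves a statement about a different object.

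\textbf{The lemma is false as stated.} Take $\Gamma=\{p\}$.
\begin{enumerate}
\item The set $\{p,\lnot\Box p\}$ is locally consistent, since both formulas are designated at the world just described.
\item It therefore has a maximal locally consistent extension $\Lambda$. Since $\Lambda\supseteq\Gamma$, $w=\pair{1,\Lambda}$ is a world of the canonical model.
\item Then $\Box p\notin w$, while $\Gamma\cup\Pi\cup\{\lnot p\}$ contains both $p$ and $\lnot p$ and so is locally inconsistent.
\end{enumerate}

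\textbf{Where the paper's proof fails.} It uses $\Gamma\subseteq w$ to put $(\pi_1\land\ldots\land\pi_n)\so\varphi$ into $w$. It then invokes (IN) and (KA) to put $\Box(\pi_1\land\ldots\land\pi_n)\so\Box\varphi$ into $w$. But (IN) with empty side sets necessitates only theorems, and membership in $w$ is not theoremhood; in the counterexample, $p\in w$ but $\Box p\notin w$.

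So your diagnosis of the obstacle is correct and sharper than the paper's argument. What is needed is a corrected statement or a different canonical construction, not a cleverer proof of this one.
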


\begin{proof}
    Suppose that $\Box \varphi \notin w$ and that $\Gamma \cup \Pi  \cup \{\lnot \varphi\}$ is locally inconsistent. Then we obtain $\vdash (\gamma_1 \land \gamma_2\land \ldots \land \gamma_m) \to ((\pi_1 \land \pi_2 \land \ldots \land \pi_n) \so \varphi)$ for some $\pi_1, \pi_2, \ldots, \pi_n \in \Pi$ and $\gamma_1, \gamma_2, \ldots, \gamma_m \cup\Gamma$. Since $\Gamma\subset w$,  $((\pi_1 \land \pi_2 \land \ldots \land \pi_n)\so \varphi)\in w$.
    From (IN)-\ref{introduction-necessity} and (KA)-\ref{kaxiom}, $(\Box (\pi_1 \land \pi_2 \land \ldots \land \pi_n) \so \Box \varphi) \in w$. Since $\Box \pi_1, \Box \pi_2, \ldots, \Box \pi_n \in w$, we obtain the contradiction $\Box \varphi \in w$.
\end{proof}

\begin{lemma}\label{valuation_canonical_model}
    Let $\mathcal{M} = \pair{M, R,I, v}$ be the canonical model with respect to a locally consistent set $\Gamma$. Then, for every formula $\varphi$ and every world $w \in M$ such that $I(w) = \{1,0,z,-z\}$
    \begin{enumerate}
        \item $v_w(\varphi) = 1$ iff $\varphi \in w$ and $\circ \varphi \in w$,
        \item $v_w(\varphi) = 0$ iff $\varphi \notin w$ and $\circ \varphi \in w$,
        \item $v_w(\varphi) = z$ iff $\varphi \in w$ and $\circ \varphi \notin w$,
        \item $v_w(\varphi) = -z$ iff $\varphi \notin w$ and $\circ \varphi \notin w$,
    \end{enumerate}
\end{lemma}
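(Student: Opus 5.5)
Throughout, I argue by induction on the complexity of $\varphi$, simultaneously for all worlds $w\in M$. Because the four values $1,0,z,-z$ exhaust $I(w)$ and the four membership patterns of $(\varphi,\circ\varphi)$ exhaust the possibilities, it is enough to prove the left-to-right direction of each clause. The converses then follow automatically, since the value assignment and the membership pattern are both functions.

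For propositional variables the clauses hold by the definition of $v_w$. For $\lnot$, $\land$, $\lor$ and $\circ$, I reproduce the argument of Lemma~\ref{conservation-values-10-classical}, using (CLM) to import BR, BF, AwB, NwB, NB, TNB1/2, BC, OV and DB. There are two inputs to check. First, each world $w=\pair{i,\Lambda}$ is closed under $\vdash$ and contains exactly one of $\alpha,\lnot\alpha$. This follows from Lemma~\ref{worldbivalence} together with Lemma~\ref{proofByContradiction}: if $\Lambda\vdash\alpha$ then $\Lambda\cup\{\lnot\alpha\}$ is locally inconsistent, so $\lnot\alpha\notin\Lambda$ and hence $\alpha\in\Lambda$. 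Second, in $B_8^\circ$ the operations $\cdot,+,-$ of two elements of the subalgebra $I(w)$ stay in $I(w)$, so the down-interpretation is trivial. Hence the Boolean clauses of $v_w$ agree with those of $B_4^\circ$ after renaming $a$ as $z$.

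The new case is $\varphi=\Box\alpha$. Here I analyse $v_w(\Box\alpha)=\bigwedge_{L_w}\{(v_u(\alpha))^{L_w}: wRu\}$. Write $L_w=\{1,0,z,-z\}$. Take $u$ with $I(u)\neq I(w)$ and $I(u)=\{1,0,y,-y\}$. The atoms of $B_8$ are pairwise incomparable, and $y$ and $z$ sit in different subalgebras. From this I expect $y^{L_w}=0$ and $(-y)^{L_w}=0$, unless the down-interpretation of an element of $U$ lands on $z$; I will compute this precisely from Figure~\ref{fig-8boolean}. If $y$ and $z$ are both in $U$ and are distinct coatoms, then $y\cdot z$ is an atom in $U$, so the largest element of $L_w$ below $y$ is $0$. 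The upshot is that a foreign world contributes $1$ if $v_u(\alpha)=1$ and $0$ otherwise, while a same-lattice world contributes $v_u(\alpha)$ itself. This matches the design of $R$: the foreign successors of $w$ are required to contain $\alpha\land\circ\alpha$ for every $\Box\alpha\in w$.

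With this in hand I prove the four clauses using the IH at successors.

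\textbf{(a) $\Box\alpha\in w$.} Every successor contributes $1$ or $z$: same-lattice successors contain $\alpha$, and foreign successors contain $\alpha\land\circ\alpha$, so their value is $1$ by IH. Hence $v_w(\Box\alpha)\in\{1,z\}$.

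\textbf{(b) $\Box\alpha\notin w$.} I need a successor with value $0$ or $-z$. Lemma~\ref{boxnotin} makes $\Gamma\cup\Pi\cup\{\lnot\alpha\}$ locally consistent. I extend it to a maximal $\Lambda'$ and take $u=\pair{i,\Lambda'}$ in the same lattice, so that $wRu$ and $\lnot\alpha\in u$.

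\textbf{(c) Separating $1$ from $z$, and $0$ from $-z$.} This is governed by $\circ\Box\alpha$, and this is the main obstacle; I use BB, FC, EA and IN.

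If $\Box\alpha,\circ\Box\alpha\in w$, then BB gives $\Box\circ\alpha\in w$. So every successor contains $\alpha\land\circ\alpha$, and the meet is $1$.

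If $\Box\alpha\in w$ but $\circ\Box\alpha\notin w$, I must exhibit a successor valued $z$ or a foreign successor valued other than $1$. I argue by contradiction. If no such successor exists, then $\Gamma\cup\Pi\cup\circ\Pi\cup\{\lnot(\alpha\land\circ\alpha)\}$ is locally inconsistent, so it does not yield a foreign witness. I then use IN (clause (a) with $\Lambda=\Gamma$) to derive $\Box\circ\alpha$. After that, EA and the $\veebar$ part should force $\circ\Box\alpha$. Concretely, I would use EA to produce a $\Diamond(\alpha\land\lnot\circ\alpha)$ witness and build a same-lattice successor containing $\alpha$ and $\lnot\circ\alpha$.

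If $\Box\alpha\notin w$, the value is $0$ exactly when some successor is $0$, or when there are successors valued $z$ and $-z$ (since $z\cdot -z=0$). FC covers the mixed case. For the converse, where $\lnot\circ\Box\alpha\in w$, EA yields exactly one of $\Diamond(\alpha\land\lnot\circ\alpha)$ and $\Diamond(\lnot\alpha\land\lnot\circ\alpha)$. I turn this into successors as in Lemma~\ref{boxnotin} and check that the meet is $-z$.

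The delicate step is producing correctly-typed successors: foreign worlds for the witness arguments, and same-lattice worlds with a prescribed $\circ$-status. I expect to need variants of Lemma~\ref{boxnotin} with $\circ\Pi$ added, justified by clause (a) of IN.
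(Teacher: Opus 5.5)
You follow the paper's route: induction, the Boolean cases as in Lemma~\ref{conservation-values-10-classical}, and Lemma~\ref{boxnotin} with BB and EA for $\Box$. Proving one direction per clause is legitimate. Your BB step for $\Box\alpha,\circ\Box\alpha\in w$ also covers same-lattice successors valued $z$, which the paper's case (i.b) overlooks. There are, however, genuine gaps.

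First, your step (b) rests on Lemma~\ref{boxnotin}. Its proof necessitates $(\pi_1\land\dots\land\pi_n)\so\varphi$, which is only a member of $w$, not a theorem, and for nonempty $\Gamma$ both that lemma and the statement fail. Take $\Gamma=\{\lnot p\}$ and a world $w\supseteq\{\lnot p,\lnot\Box\lnot p,\Box\circ p\}$. This set is locally consistent: every rule of $\vdash$ is sound for ordinary two-valued Kripke models, where $\circ$ is constantly true, and the set is satisfiable there. Every successor of $w$ contains $\lnot p$ (all worlds do) and $\circ p$ (because $\Box\circ p\in w$), so it gives $\lnot p$ the value $1$. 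Hence $v_w(\Box\lnot p)=1$, although $\Box\lnot p\notin w$. At best you can aim at $\Gamma=\emptyset$.

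Relatedly, worlds are not closed under $\vdash$; your justification inverts Lemma~\ref{proofByContradiction}. IN with $\Lambda=\{p\land\circ p\}$ gives $p\land\circ p\vdash\Box p$, yet $\{p\land\circ p,\lnot\Box p\}$ is locally consistent. So your appeal to IN with $\Lambda=\Gamma$ inside $w$ is unlicensed. It is also vacuous, since $\alpha\in\Pi$ and $\circ\alpha\in\circ\Pi$ already. Drop it and keep your EA argument, which is the paper's case (iii).

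Second, the separation of $0$ from $-z$ when $\Box\alpha\notin w$ is only sketched, and the sketch does not work.

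\begin{enumerate}
\item Your computation of foreign contributions is wrong. $U$ is generated by an atom $e$. If $z=e$, a foreign designated $y$ (a coatom above $e$) has $y^{L_w}=z$. If $z$ is a coatom and $y\neq z$ is another coatom in $U$, then $-z\le y$, so $y^{L_w}=-z$, not $0$. Foreign successors therefore do not contribute only $1$ or $0$. The paper's case (ii) tacitly assumes they do.
\item Even granting that reading, you never construct the needed successors. For $\circ\Box\alpha\in w$, Lemma~\ref{boxnotin} yields only a same-lattice successor containing $\lnot\alpha$, possibly valued $-z$. A foreign witness would need $\Gamma\cup\Pi\cup\circ\Pi\cup\{\lnot(\alpha\land\circ\alpha)\}$ to be locally consistent, and no rule gives this. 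The paper's (ii) simply declares a foreign world containing $\Pi\cup\{\lnot\psi\}$ accessible. For $\circ\Box\alpha\notin w$, you must exclude every successor contributing $0$ or $z$. That is a universal claim, and EA, which only asserts existence, cannot supply it.
\item FC cannot fill the hole, because it is unsound. Two same-lattice successors valued $1$ and $-z$ make $\Diamond\alpha$ and $\Diamond\lnot\alpha$ designated, while $v_w(\Box\alpha)=-z$.
\end{enumerate}

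Concretely, suppose $w$ contains $\Diamond\alpha$, $\Diamond\lnot\alpha$, $\Box(\alpha\so\circ\alpha)$ and $\Box(\lnot\alpha\so\lnot\circ\alpha)$. Applying FC inside $w$ puts $\circ\Box\alpha$ there. Yet every successor of $w$ contributes $1$ or $-z$ (foreign ones have value $1$), so $v_w(\Box\alpha)\neq 0$. Either such sets are locally inconsistent, which you would have to derive even though they are satisfiable in the intended semantics, or clause (ii) fails. This is exactly the point your sketch leaves open, and the paper's cases (ii) and (iv) leave it open as well.
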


In the proof, it will be useful to consider the equivalent list for the negation:
\begin{enumerate}
    \item $v_w(\varphi) \neq 1$ iff $\varphi \notin w$ or $\circ \varphi \notin w$,
    \item $v_w(\varphi) \neq 0$ iff $\varphi \in w$ or $\circ \varphi \notin w$,
    \item $v_w(\varphi) \neq z$ iff $\varphi \notin w$ or $\circ \varphi \in w$,
    \item $v_w(\varphi) \neq -z$ iff $\varphi \in w$ or $\circ \varphi \in w$.
\end{enumerate}

\begin{proof} Fix $w \in M$ with Boolean algebra $\{1,0,a,-a\}$. We prove by induction each item of the lemma. We will skip the induction cases for the compositions with the Boolean operator $\{\lnot, \land, \circ\}$ since the proof is very similar to Lemma~\ref{conservation-values-10-classical}. So consider $\varphi = \Box \psi$. 
    
    Suppose (\textbf{i}) that $\Box \psi \in w$ and $\circ \Box \psi \in w$, but that $v_w(\Box \psi) \neq 1$. 
    Then it exists a $w'$ such that (\textbf{i.a}) $w R w'$, $v_{w'}(\psi) \neq 1$ and $I(w) \neq I(w')$ or (\textbf{i.b}) $w R w'$, $v_{w'}(\psi) \in \{0, -a\}$ and $I(w) = I(w')$. 
    If (\textbf{i.a}) is the case, we conclude $\psi \notin w'$ or $\circ \psi \notin w'$ from IH. From $w R w'$, $I(w) \neq I(w')$ and $\Box \psi \in w$, we obtain $\psi \in w'$ and $\circ \psi \in w'$, contradicting what we concluded in the previous phrase. If (\textbf{i.b}) is the case, we conclude that $\psi \notin w'$. From $w R w'$, $I(w) = I(w')$ and $\Box \psi \in w$, we obtain the contradiction $\psi \in w'$. Now suppose (\textbf{i'}) that $v_w(\Box \psi) = 1$ but that $\Box \psi \notin w$ or $\circ \Box \psi \notin w$. Consider the set $\Pi = \{\alpha \mid \Box \alpha \in w\}$.  
    Suppose $\Pi \cup \{\lnot (\psi \land \circ \psi)\}$ is locally inconsistent. Then from Lemma~\ref{proofByContradiction}, $\Pi \vdash \psi \land \circ \psi$ and, thus, using (IN)-\ref{introduction-necessity}, we obtain $\Box \Pi \vdash \Box \psi$. Once $\Box \Pi \subseteq w$, we have that $\Box \psi \in w$. Hence $\circ \Box \psi \notin w$ and, consequently, that $\lnot \circ \Box \psi \in w$. Clearly, $\Diamond (\neg \psi \land \lnot \circ \psi) \notin w$, thus, from (EA)-\ref{existence-axiom}, it follows that $\Diamond (\psi \land \lnot \circ \psi) \in w$. Equivalently, we have that $\Box (\lnot \psi \lor \circ \psi) \notin w$. From Lemma~\ref{boxnotin}, $\Gamma \cup\Pi \cup \{\psi \land \lnot \circ \psi\}$ is locally consistent. This is absurd since $\Pi \vdash \circ \psi$. We thus conclude that $\Gamma \cup\Pi \cup \{\lnot (\psi \land \circ \psi)\}$ is locally consistent. Consider a world $w' \in M$ such that $w' \supseteq \Gamma \cup\Pi \cup \{\lnot (\psi \land \circ \psi)\}$. Then $w R w'$ and, from IH, $v_{w'}(\psi) \neq 1$. Consequently, we obtain the absurd $v_w(\Box \psi) \neq 1$.

    Suppose (\textbf{ii}) that $\Box \psi \notin w$ and $\circ \Box \psi \in w$, but that $v_w(\Box \psi) \neq 0$. Because of Lemma~\ref{boxnotin}, $\Gamma \cup\Pi \cup \{\lnot \psi\}$ is locally consistent for $\Pi = \{\alpha \mid \Box \alpha \in w\}$. 
    
    Thence we obtain a world $w'$ such that $I(w') \neq I(w)$ and $w'$ contains $\Pi \cup \{\lnot \psi\}$. Naturally, $w R w'$; also, from IH, $v_{w'}(\psi) \neq 1$. In this case, we obtain $v_w(\Box \psi) = 0$.
    
    Now suppose (\textbf{ii'}) that $v_w(\Box \psi) = 0$, but that $\Box \psi \in w$ or $\circ \Box \psi \notin w$. The fact that $v_w(\Box \psi) = 0$ give us two possible scenarios: (\textbf{ii'.a}) there is $w'$ such that $w R w'$ and $v_{w'}(\psi) = 0$ or (\textbf{ii'.b}) there are $w_1$ and $w_2$ such that $w R w_1$, $w R w_2$, $I(w_1) = I(w_2) = I(w)$, $v_{w_1} = z$ and $v_{w_2} = -z$. From IH, if we have (\textbf{ii'.a}), we conclude $\psi \notin w'$, which contradicts $w R w'$ and the fact that $\Box \psi \in w$. If we have (\textbf{ii'.b}), the fact that $v_{w_2} = -z$ give us from IH that $\psi \notin w_2$. This again contradicts $w R w'$ and $\Box \psi \in w$.

    Suppose (\textbf{iii}) that $\Box \psi \in w$ and $\circ \Box \psi \notin w$. From (EA)-\ref{existence-axiom}, we obtain that $\Diamond (\psi \land \lnot \circ \psi) \in w$ (as before, equivalently, $\Box (\lnot \psi \lor \circ \psi) \notin w$). From Lemma~\ref{boxnotin}, $\Gamma \cup\Pi \cup \{\psi \land \lnot \circ \psi\}$ is locally consistent for $\Pi = \{\alpha \mid \Box \alpha \in w\}$. So there is a world $w'$ such that $I(w') = I(w)$, $w'$ contains $\Pi \cup \{\psi \land \lnot \circ \psi\}$ and $w R w'$. From IH, $v_{w'}(\psi) = z$. Now suppose $w R w_1$. Then we obtain $\psi \in w_1$ and, consequently, $v_{w_1}(\psi) \in \{y, 1\}$ from IH ($y$ being the value different from $1$ in the filter of $w_1$'s algebra). If $I(w) \neq I(w_1)$ and $v_{w_1}(\psi) = y$, then $\lnot \circ \psi \in w_1$. But this contradicts $w R w_1$. Therefore $I(w) = I(w_1)$ or $v_{w_1}(\psi) = 1$. Since we have $v_{w'}(\psi) = z$ and all other values of the accessibles are either $1$ or $z$ we conclude that $v_w(\Box \psi) = z$. Now suppose (\textbf{iii'}) $v_w(\Box \psi) = z$. If $\Box \psi \notin w$, then $\Gamma \cup\Pi \cup \{\lnot \psi\}$ is consistent where $\Pi = \{\alpha \mid \Box \alpha \in w\}$ from Lemma~\ref{boxnotin}. Then there is $w'$ with $I(w') \neq I(w)$ such that $\psi \notin w'$. From IH, $v_{w'}(\psi) \neq 1$; thus $v_w(\Box \psi) = 0$, absurd. So $\Box \psi \in w$. Assume $\circ \Box \psi \in w$; from (BB)-\ref{ball-box}, $\Box \circ \psi$. From $v_w(\Box \psi) = z$, there is $w'$ such that $v_{w'}(\psi) = z$ and $w R w'$. From IH, $\psi \in w'$ and $\lnot \circ \psi \in w'$. Since $w R w'$, $I(w') = I(w)$ and $\Box \psi \in w$, $\psi \land \circ \psi \in w'$ which is absurd. 

    The proof of case (\textbf{iv}) is similar to the one given for (\textbf{iii}).  We leave it for the reader.

\end{proof}

\begin{remark}
    Notice that the proof of Theorem~\ref{valuation_canonical_model} does not require that we have worlds for all kinds of world. 
    More concretely, the proof never requires us to consider $I(w) \neq I(w') \neq I(w'') \neq I(w)$.
    Consequently, we may have Theorem~\ref{valuation_canonical_model} for the canonical model $M$ extending the locally consistent $\Lambda$ using only worlds with $B_1$ and $B_2$ instead of having worlds with $B_1$, $B_2$ and $B_3$.
\end{remark}

\begin{corollary}\label{valuation_canonical_model_2lattices}
    Let $\mathcal{M} = \pair{M, R, v}$ be the canonical model with respect to a locally consistent set $\Gamma$. Let $M' = \{w \in M \mid I(w) = B_1 \text{ or } I(w) = B_2\}$ and $\mathcal{M}' = \pair{M', R|M', v|M'}$.
    
    Then, for every formula $\varphi$ and every world $w \in M'$
    \begin{enumerate}
        \item $v_w(\varphi) = 1$ iff $\varphi \in w$ and $\circ \varphi \in w$,
        \item $v_w(\varphi) = 0$ iff $\varphi \notin w$ and $\circ \varphi \in w$,
        \item $v_w(\varphi) = z$ iff $\varphi \in w$ and $\circ \varphi \notin w$,
        \item $v_w(\varphi) = -z$ iff $\varphi \notin w$ and $\circ \varphi \notin w$,
    \end{enumerate}
    where $z$ is a value in $w$'s algebra different from $1$ and from $0$.
\end{corollary}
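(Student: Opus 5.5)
The plan is to rerun the induction of Lemma~\ref{valuation_canonical_model} inside $\mathcal{M}'$ and check that no step needs a world outside $M'$. The Boolean clauses for $\lnot$, $\land$ and $\circ$ only use membership facts about the fixed world $w$ together with the \B-rules; this is exactly the argument of Lemma~\ref{conservation-values-10-classical}. So these cases carry over without change, and only the case $\varphi = \Box\psi$ needs attention. The restriction $R|M'$ shrinks the set of successors of $w$. This gives two obligations: every direction that \emph{uses} a successor $w'$ must find one that lies in $M'$, and every direction that \emph{quantifies over all} successors must remain valid for the smaller set.

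\textbf{Universal directions.} I would go through (i), (ii'), (iii) and (iv) and note that the arguments there ``for every $w_1$ with $w R w_1$'' use only the two defining clauses of $R$. One clause is for $I(w_1)=I(w)$ and the other for $I(w_1)\neq I(w)$. Both clauses are unchanged in $\mathcal{M}'$, so restricting to fewer successors can only preserve these conclusions.

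\textbf{Existential directions.} These are (i'), (ii), the witness of value $z$ in (iii), the step in (iii') where $\Box\psi\notin w$ leads to value $0$, and the analogues in (iv). In each of them the original proof extends a locally consistent set (via Lemma~\ref{boxnotin} or Lemma~\ref{proofByContradiction}) to a maximal locally consistent $\Lambda$. It then takes a world $\pair{j,\Lambda}$, with $j$ either equal to the index of $w$ or just different from it. Fix $w=\pair{i,\Lambda_0}$ with $i\in\{1,2\}$.
\begin{itemize}
\item If the same-lattice world is needed, take $j=i$; then $\pair{i,\Lambda}\in M'$.
\item If a different-lattice world is needed, take $j=3-i$, which again lies in $M'$.
\end{itemize}
The canonical construction makes $\pair{j,\Lambda}$ exist for every $j$ and every maximal locally consistent $\Lambda$, so this choice is always available. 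The relation $wRw'$ is then checked by the same membership conditions as before.

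\textbf{Main obstacle.} The delicate point is the step in (i') where a witness refuting $v_w(\Box\psi)=1$ is chosen. The proof of Lemma~\ref{valuation_canonical_model} fixes $w'\supseteq\Gamma\cup\Pi\cup\{\lnot(\psi\land\circ\psi)\}$ without stating its lattice. For a different-lattice $w'$, the relation $wRw'$ requires $\Pi\land\circ\Pi\subseteq w'$. I would therefore choose the world from the lattice for which $wRw'$ is justified (that is, with $I(w')=I(w)$ when only $\Pi\subseteq w'$ is secured), and check that $v_{w'}(\psi)\neq1$ still forces $v_w(\Box\psi)\neq1$. Finally, I would confirm the Remark's claim that no case ever compares three pairwise distinct lattices. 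The only lattice facts used are ``$I(w')=I(w)$'' and ``$I(w')\neq I(w)$'', together with down-interpretations between two sublattices, and both are realizable with indices in $\{1,2\}$.
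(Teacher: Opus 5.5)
Your plan is the paper's own argument. The Remark preceding the corollary observes that the proof of Lemma~\ref{valuation_canonical_model} only ever distinguishes $I(w')=I(w)$ from $I(w')\neq I(w)$ and never needs three pairwise distinct lattices, so the induction can be rerun in $\mathcal{M}'$. Your split into universal directions (preserved when the successor set shrinks) and existential directions (witnesses $\pair{i,\Lambda}$ or $\pair{3-i,\Lambda}$, both in $M'$) is a more explicit version of that. Insisting on a same-lattice witness in (i') is a correct refinement of a point the paper leaves implicit.

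There is, however, one step that your bookkeeping asserts but does not secure. It is inherited from the lemma's proof, not created by the restriction to $B_1,B_2$.
\begin{itemize}
\item The accessibility worry you raise for (i') also applies to case (ii), where a \emph{different}-lattice successor is genuinely needed. The paper takes $w'\supseteq\Gamma\cup\Pi\cup\{\lnot\psi\}$ with $I(w')\neq I(w)$. For such $w'$ the canonical relation also demands $\circ\alpha\in w'$ for every $\alpha\in\Pi$, and Lemma~\ref{boxnotin} does not provide this. So ``$wRw'$ is checked by the same membership conditions as before'' fails at this step.
\item This is more than a missing justification. The argument for (ii) never uses the hypothesis $\circ\Box\psi\in w$. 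If it were valid, it would yield $v_w(\Box\psi)=0$ whenever $\Box\psi\notin w$, contradicting (iv) at any world containing $\lnot\Box\psi$ and $\lnot\circ\Box\psi$.
\item Your fallback of a same-lattice witness does not rescue (ii). Such a world containing $\lnot\psi$ may give $\psi$ the value $-z$, and the meet can then be $-z$ rather than $0$. In (iii'), by contrast, a same-lattice witness does suffice, since one successor of value $0$ or $-z$ already rules out $z$.
\end{itemize}
To close (ii), you would have to use $\circ\Box\psi\in w$ to produce one of the following:
\begin{itemize}
\item a same-lattice successor containing $\lnot\psi\land\circ\psi$;
\item same-lattice successors containing $\psi\land\lnot\circ\psi$ and $\lnot\psi\land\lnot\circ\psi$, respectively;
\item a genuinely accessible different-lattice successor refuting $\psi\land\circ\psi$.
\end{itemize}
Neither your plan nor the Remark supplies this, so the corollary is only as secure as this case of the lemma.
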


This corollary will be used in the next section. This will allow us to prove that some frame properties are not characterizable. We do not need to use this to demonstrate soundness and completeness.

\begin{theorem}\label{sound-complete-modal}
     For $\Sigma \cup \{\varphi\}$ a set of $\mathcal{L}^{\Box,\circ}$-formulas,
     $$\Sigma \vdash \varphi \text{ if, and only if, } \Sigma \vDash \varphi$$
\end{theorem}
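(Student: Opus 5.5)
Soundness and completeness are proved separately. Throughout, $\vDash$ is the global consequence relation defined at the start of Section~\ref{sec:four-eight-logics}.

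\emph{Soundness.} I would induct on the length of derivations and check each rule against every model of $\Sigma$ over $B_8^\circ$ with worlds in $B_1,B_2,B_3$. Rule (CLM) holds because the valuation at each world $w$ takes values in the Boolean subalgebra $I(w)$, and down-interpretation is the identity there; so Theorem~\ref{sound-complete-10-classic} applies world by world.

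For (KA), (BB), (FC) and (EA) I would compute $v_w(\Box\alpha)$ case by case.
\begin{itemize}
\item For $u$ with $I(u)=I(w)$, the value $v_u(\alpha)$ is its own interpretation.
\item For $u$ with $I(u)\neq I(w)$, the down-interpretation sends $1\mapsto 1$ and every other value to $0$ or to $-z$. Hence such a $u$ contributes a designated value only if $v_u(\alpha)=1$, i.e.\ only if $\alpha\wedge\circ\alpha$ holds at $u$.
\end{itemize}
From this, (FC) and (EA) follow by inspecting which meets can produce $z$ or $-z$. Rule (IN) is the delicate one, since it is the non-normal rule. Suppose $\Lambda\cup\Box\Gamma$ holds globally in $M$, and the premises of (IN) hold. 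Then at every accessible $u$ the formulas $\Gamma$ hold, but $\circ\Gamma$ need not. To get $\Box\varphi$ I must use that $\Lambda\cup\Gamma\cup\circ\Gamma\vdash\varphi\wedge\circ\varphi$ is itself sound in the relevant model. I would check whether $\Lambda$ is intended as global premises, so that $\Lambda$ holds at $u$, and use the induction hypothesis on the shorter derivations.

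\emph{Completeness.} I argue by contraposition. Assume $\Sigma\nvdash\varphi$. Then $\Sigma\cup\{\lnot\varphi\}$ is locally consistent: if it were not, Lemma~\ref{proofByContradiction} would give $\Sigma\vdash\varphi$, provided $\Sigma$ is locally consistent, and otherwise $\Sigma$ proves everything.
\begin{itemize}
\item Build the canonical model with respect to $\Gamma=\Sigma$. Every world extends $\Sigma$, so by Lemma~\ref{valuation_canonical_model} each world gives every $\sigma\in\Sigma$ the value $1$ or $z$, which lies in the ultrafilter. Hence the model is a model of $\Sigma$ in the global sense.
\item Extend $\Sigma\cup\{\lnot\varphi\}$ to a maximal locally consistent $\Lambda$ and take the world $\pair{1,\Lambda}$.
\item By Lemma~\ref{valuation_canonical_model}, $\varphi\notin\Lambda$ gives $v(\varphi)\in\{0,-z\}$, which is not designated. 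So $\Sigma\nvDash\varphi$.
\end{itemize}
I would also verify that the canonical $R$ and valuation form a legitimate many-logics structure, which is immediate from the definition.

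\emph{Main obstacle.} I expect the main obstacle to be the soundness of (IN). The issue is that $\Box\varphi$ may be designated via $z$ rather than $1$. The meet over accessible worlds must stay in $\{1,z\}$, which requires $v_u(\varphi)\in\{1,z\}$ at same-lattice worlds and $v_u(\varphi)=1$ at other-lattice worlds. Premise (a) is used for the latter, (b) for the former. The risk is that (a) only yields $\varphi\wedge\circ\varphi$ at $u$ when $\circ\Gamma$ holds at $u$, which fails if some $\gamma$ has value $z$ at $u$. I would handle this using the fact that $\Box\gamma$ is designated at $w$. That forces $v_u(\gamma)=1$ at every other-lattice $u$ accessible from $w$, so $\circ\Gamma$ does hold there. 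This is exactly why premise (a) adds $\circ\Gamma$.
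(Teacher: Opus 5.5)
\textbf{Soundness.} This half has a genuine gap, and it lies in the rules themselves, so the paper's one-line appeal to induction over derivations does not close it either.

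First, (FC) is not sound, and the meet inspection you propose exposes this. Take two worlds with the same lattice $\{1,0,z,-z\}$, each accessing both, with $v(p)=1$ at one and $v(p)=-z$ at the other. At both worlds $\Box\lnot p=0\cdot z=0$ and $\Box p=1\cdot(-z)=-z$. So $\Diamond p$ and $\Diamond\lnot p$ take the designated values $1$ and $z$ everywhere, while $\circ\Box p$ takes the value $0$. Hence $\Diamond p,\Diamond\lnot p\nvDash\circ\Box p$, and the left-to-right direction of the statement already fails.

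Second, your argument for (IN) is pointwise, but premise (a) is a global consequence. Its induction hypothesis only says that $\varphi\land\circ\varphi$ holds in models where $\Lambda\cup\Gamma\cup\circ\Gamma$ holds at every world. Global $\Box\Gamma$ makes $\Gamma$ hold throughout the submodel generated by an accessible $u$. However, you obtain $\circ\Gamma$ only at $u$ itself, not at its same-lattice successors. This cannot be patched. Let $U$ be generated by the atom $e$, and let $w,u,x$ have lattices not containing $e$, with $I(w)\neq I(u)=I(x)$ and $R=\{(w,u),(u,x)\}$. Set $v(p)=1$ at $w$ and $u$, and $v_x(p)=z$. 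Then:
\begin{itemize}
\item $\Box p$ holds at every world;
\item $\varphi=\Diamond\top\to\Diamond p$ satisfies $\{p\}\vDash\varphi$ and $\{p,\circ p\}\vDash\varphi\land\circ\varphi$;
\item yet $v_u(\varphi)=z$, which $w$ reads as the non-designated element $-z$ of its own lattice, so $w\nvDash\Box\varphi$.
\end{itemize}
So the semantic validity of (a) and (b), which is all your induction hypothesis provides, does not yield $\Box p\vDash\Box\varphi$.

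Finally, your claim that other-lattice values different from $1$ are read as $0$ or $-z$ holds for only two of the three world types. From $\{1,0,e,-e\}$, the designated coatoms of the other two subalgebras are read as $e$, which is designated. So at such worlds a designated $\Box\gamma$ does not force $v_u(\gamma)=1$.

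\textbf{Completeness.} Your route differs from the paper's. The paper builds the canonical model over $\Sigma\cup\{\lnot\varphi\}$, so that $\varphi$ fails at every world. You build it over $\Sigma$ and use a single world through $\Sigma\cup\{\lnot\varphi\}$. Yours is the more natural shape for a global consequence relation. For $\Sigma=\emptyset$ it needs Lemma~\ref{boxnotin} only over the empty base, where its proof is fine. The paper's base $\{\lnot\varphi\}$ already runs into the problem below, for instance with $\varphi=\lnot(p\land\circ p)$.

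For nonempty $\Sigma$, however, you inherit the paper's difficulty. Worlds are merely maximal locally consistent, hence not closed under the global rules, and Lemma~\ref{valuation_canonical_model} fails. Take $\Sigma=\{p,\circ p\}$. The canonical valuation gives $p$ the value $1$ at every world, so $\Box p$ has value $1$ everywhere. Yet $\{p,\circ p,\Diamond\lnot p\}$ is locally consistent: all the rules are sound on two-valued Kripke models, where $\circ$ is constantly true, and this set is satisfiable there. So some world contains $\Diamond\lnot p$, although $\Diamond\lnot p$ has value $0$ at that world. The faulty step is in the proof of Lemma~\ref{boxnotin}, which passes from $(\pi\to\psi)\in w$ to $(\Box\pi\to\Box\psi)\in w$ even though $w$ is not closed under (IN).
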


\begin{proof}
\quad

({\em Proof of soundness}) It is worth noting that rule (IN) creates an additional difficulty since the proof of a given statement $\Box \varphi$ depends on the proof of other
statements, namely $\varphi$ and $\varphi \wedge \circ \varphi$) from different sets of premisses. The proof follows the same argument of the proof of soundness in \cite{freire_modality_2024} by induction over the level of the formulas in a $\vdash$ proof.

({\em Proof of completeness}) Suppose $\Sigma \nvdash \varphi$. Then, by Lemma \ref{proofByContradiction} $\Sigma \cup \{\lnot \varphi\}$ is locally consistent. Let $\mathcal{M} = \pair{M, R, v}$ be the canonical model with respect to $\Sigma \cup \{\lnot \varphi\}$. Hence, for any $w\in M$, $\varphi\notin w$. By Lemma \ref{valuation_canonical_model}, $v_w(\varphi)=0$ or $v_w(\varphi)=-z$. Therefore, $\Sigma \not \vDash \varphi$.
\end{proof}

\section{Characterization of frames in the (8-4) modal logic}\label{sec_prop}

A direct consequence of \cref{ktheorems} is that the four-eight modal logic produces the same local consequence relation as the system K. However, the global consequence relation differs. As a result, we can anticipate that the frame characterization will behave differently within this new framework. In this section, we explore three key observations in this regard: (a) certain properties, such as transitivity and reflexivity, are still characterized by the same formulas as in K; (b) other properties may require more intricate formulas for proper characterization; and (c) new forms of characterization become possible, allowing us to account for more nuanced variations in the accessibility relation in light of the change in the base lattice.

\begin{notation}
    We use the symbol $\Vdash$ for the global consequence relation of system K. Moreover, we use $\vDash$ for the global consequence relation developed in \cref{sec:four-eight-logics}.
\end{notation}

\begin{theorem}\label{one-direction-preservation}
    Let $P$ be a property of frames, and let $\varphi$ be a $\cl^\Box$-formula such that, for every traditional frame $F$, $F \Vdash \varphi$ if and only if $F$ is $P$. In other words, $P$ is characterized by the formula $\varphi$ in the K system.
    
    In this case, if a $F$ is a (8-4)-frame\footnote{By a (8-4)-frame we mean a many-logic modal frame over the set of the three 4-valued sub-Boolean algebras of $B_8^\circ$.} such that $F \vDash \varphi$, then $F$ is $P$.
\end{theorem}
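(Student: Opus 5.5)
The idea is to reduce to the classical case by restricting valuations to the two values that every world shares. Let $F=\pair{W,R,I}$ be an (8-4)-frame with $F \vDash \varphi$. I will show that the underlying Kripke frame $\pair{W,R}$ satisfies $F \Vdash \varphi$ in the sense of K. The hypothesis that $\varphi$ characterizes $P$ then gives that $\pair{W,R}$, and hence $F$, has property $P$. (Here $P$ is understood as a property of the accessibility structure $\pair{W,R}$ only.)

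Fix an arbitrary classical Kripke model on $\pair{W,R}$, i.e.\ a valuation $V: W\times Var \to \{\mathrm{true},\mathrm{false}\}$. Define a many-logics valuation $v$ on $F$ by $v(w,p)=1$ if $V(w,p)=\mathrm{true}$ and $v(w,p)=0$ otherwise. This is admissible because $0$ and $1$ belong to each of the three four-valued subalgebras $B_1,B_2,B_3$, so $v(w,p)\in I(w)$ for every $w$.

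The key step is a lemma, proved by induction on $\psi\in\For(\cl^\Box)$: for every $w$, $v_w(\psi)\in\{0,1\}$, and $v_w(\psi)=1$ iff $\psi$ is true at $w$ in the Kripke model.
\begin{itemize}
\item The induction rests on two facts. First, $\{0,1\}$ is closed under $-$, $+$ and $\cdot$ in $B_8^\circ$. Second, since $0,1\in L_w$, the down-interpretation $x^{L_w}$ is just $x$ for $x\in\{0,1\}$.
\item Hence the clauses for $\lnot,\lor,\land$ in Definition~\ref{def.valuation.mlms} reduce to the classical truth tables.
\item For $\Box\psi$, every accessible $u$ has $v_u(\psi)\in\{0,1\}$ by the induction hypothesis, and its interpretation in $L_w$ is unchanged. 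The meet $\bigwedge_{L_w}$ of a subset of $\{0,1\}$ is $1$ iff all its members are $1$; this includes the empty case, where $R$ has no successors and the meet is the top $1$.
\item So $v_w(\Box\psi)=1$ iff $\psi$ is true at every $R$-successor, exactly the Kripke clause. The $\Diamond$ clause then follows from the $\lnot$ and $\Box$ cases.
\end{itemize}
This inductive step for $\Box$ is where care is needed. The point to verify is that the cross-lattice relativization never introduces the intermediate values $z,-z$ when the inputs lie in $\{0,1\}$. This holds because the down-interpretation fixes elements already in the target sublattice.

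Finally, $F\vDash\varphi$ gives $v_w(\varphi)\in \mathrm{D}_w=U\cap L_w$ for every $w$. Since $v_w(\varphi)\in\{0,1\}$ and $0\notin U$, we get $v_w(\varphi)=1$, so $\varphi$ is true at every world of the Kripke model. As $V$ was arbitrary, $\pair{W,R}\Vdash\varphi$, and thus $F$ is $P$.
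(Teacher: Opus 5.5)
Your proposal is correct and follows the paper's proof. Both arguments restrict valuations on the (8-4)-frame to $\{0,1\}$, so that every Kripke model on $\pair{W,R}$ becomes a model on $F$; then $F \vDash \varphi$ yields $F \Vdash \varphi$ and hence $P$. You also make explicit the induction the paper leaves implicit: $\{0,1\}$ is closed under the operations, the down-interpretation fixes $0$ and $1$, the empty meet is $1$, and $0 \notin U$.
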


\begin{proof}
    Consider $P$, $\varphi$ satisfying the conditions of the theorem. Let $F$ be a frame such that $F \vDash \varphi$. Since every traditional model with frame $F$ is also a model in which we restrict the attributions to the values $0$ and $1$, we obtain $F \Vdash \varphi$. Consequently, $F$ is $P$.
\end{proof}

We shall now observe that the converse relation `$F$ is $P$ implies $F \vDash \varphi$' holds for some traditional properties and does not hold for some others.

\begin{theorem}
    $F$ is reflexive if and only if $F \vDash \Box \varphi \so \varphi$.
\end{theorem}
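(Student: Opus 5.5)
The plan is to prove the two directions separately. The direction from right to left is immediate from the previous result. The direction from left to right is a direct computation in the four-valued subalgebra $L_w$ attached to a world $w$.

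\emph{($\Leftarrow$)} Suppose $F \vDash \Box \varphi \so \varphi$. The formula $\Box p \so p$ characterizes reflexivity in the system K, and it is a $\cl^\Box$-formula. So Theorem~\ref{one-direction-preservation} applies directly and gives that $F$ is reflexive. Since $F$ validates every instance of the schema, it validates in particular the instance with $\varphi = p$, which is the one needed here.

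\emph{($\Rightarrow$)} Suppose $F = \pair{W,R,I}$ is reflexive. Fix an arbitrary model on $F$, a world $w$ and a formula $\varphi$, and write $L_w = I(w)$. I will first record the preliminary fact that every value $v_w(\cdot)$ lies in $L_w$. Moreover, $L_w$ is a subalgebra of $B_8^\circ$, so it is closed under $+$, $\cdot$ and $-$. By the remark after Definition~\ref{def.down.int}, the down-interpretation is the identity on elements of $L_w$. Consequently, on values already in $L_w$, the clauses for $\lnot$, $\lor$ and $\land$ are just the Boolean operations of $L_w$.

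Next I use reflexivity. Since $wRw$, the set whose meet defines $v_w(\Box\varphi)$ contains the term $(v_w(\varphi))^{L_w} = v_w(\varphi)$. The meet is taken in the finite lattice $L_w$ and lies below each of its terms, so $v_w(\Box \varphi) \leqslant v_w(\varphi)$. It then remains to compute
$$v_w(\Box\varphi \so \varphi) = -v_w(\Box\varphi) + v_w(\varphi).$$
In a Boolean algebra, $x \leqslant y$ gives $-x + y \geqslant -x + x = 1$, so this value is $1$. Since $1 \in U \cap L_w = \mathrm{D}_w$, we get $w \vDash \Box\varphi \so \varphi$. As $w$, the valuation and $\varphi$ were arbitrary, $F \vDash \Box\varphi\so\varphi$.

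The only step needing care is the relativization bookkeeping in the second direction. One must check that the down-interpretations in the clauses for $\Box$ and for the abbreviation $\so$ do not disturb the inequality $v_w(\Box\varphi)\leqslant v_w(\varphi)$. This reduces to the fact that $v_w(\varphi)$ already lies in $L_w$ and that $L_w$ is a Boolean subalgebra, so I do not expect a genuine obstacle. Accessible worlds $u$ with $I(u)\neq I(w)$ do not matter here, because they can only lower the meet further.
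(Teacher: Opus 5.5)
Your proof is correct and matches the paper's. The paper also takes the right-to-left direction from Theorem~\ref{one-direction-preservation}, leaving this implicit in the surrounding text. It proves the left-to-right direction by noting that $wRw$ forces $v_w(\Box\varphi)\leqslant v_w(\varphi)$; the only difference is that the paper then argues by contradiction using upward closure of the designated filter, whereas you compute $v_w(\Box\varphi\so\varphi)=1$ directly.
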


\begin{proof}
    Suppose $F$ is reflexive and that $F \nvDash \Box \varphi \so \varphi$. There is a model $M$ with frame $F$ in which some world $w$ is such that $w \nvDash \Box \varphi \so \varphi$. Therefore, $w \vDash \Box \varphi$ and $w \nvDash \varphi$. Since $F$ is reflexive, $w$ accesses $w$ and so $v_w(\Box \varphi) \leq v_w(\varphi)$. Consequently, $w \nvDash \Box \varphi$.
\end{proof}

\begin{theorem}
    $F$ is transitive if and only if $F \vDash \Box \varphi \so \Box \Box \varphi$.
\end{theorem}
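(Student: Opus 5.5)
The ``if'' direction comes directly from Theorem~\ref{one-direction-preservation}. In system K the formula $\Box \varphi \so \Box\Box\varphi$ characterizes transitivity, so any (8-4)-frame $F$ with $F \vDash \Box \varphi \so \Box \Box \varphi$ is transitive. Nothing more is needed for this half.

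For the ``only if'' direction I would copy the argument the paper used for reflexivity. Assume $F$ is transitive, and suppose some model on $F$ and some world $w$ give $v_w(\Box\varphi) \in U$ but $v_w(\Box\Box\varphi) \notin U$. First I would record the facts about down-interpretation that will be used:
\begin{itemize}
\item $x^{L'} \leq x$;
\item $x \mapsto x^{L'}$ is monotone;
\item $x^{L'}$ is the largest element of $L'$ below $x$.
\end{itemize}
Since every $L \in LAT$ is a Boolean subalgebra of $B_8^\circ$, the meets $\bigwedge_{L}$ coincide with the meets in $B_8^\circ$.

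Write $m = v_w(\Box\varphi) \in L_w \cap U$. Then $m \leq (v_t(\varphi))^{L_w} \leq v_t(\varphi)$ for every $t$ with $wRt$. Take any $u$ with $wRu$. By transitivity every $R$-successor $t$ of $u$ is also a successor of $w$, so $v_t(\varphi) \geq m$ for all such $t$. The goal is to show that $(v_u(\Box\varphi))^{L_w} \in U$, and then that the meet over all $u$ lies in $U$. Since $U$ is an ultrafilter of a finite algebra, this reduces to a single generating atom.

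The main obstacle is this cross-lattice step. Knowing $v_t(\varphi) \geq m$ does not give $(v_t(\varphi))^{L_u} \geq m$ when $m \notin L_u$, and the second reinterpretation into $L_w$ can lose more. Concretely, let $a$ be the atom generating $U$. The lattice containing $a$ (type $A$) has designated element $z = a$, and the other two lattices have $z = a + b$ and $z = a + c$.

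Down-interpretation of $a+b$ into the $A$-lattice gives $a$, which stays designated. But down-interpretation of $a$ into the $(a+b)$-lattice gives $0$. This suggests a possible counterexample: $w$ and $t$ of type $B$, $u$ of type $A$, with $wRu$, $uRt$, $wRt$, $v_t(\varphi) = a+b$ and $v_u(\varphi) = 1$. Then I compute $v_w(\Box\varphi) = a+b \in U$, while $v_u(\Box\varphi) = a$ and its down-interpretation into $L_w$ is $0$.

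So the first thing I would do is recheck this configuration against the definitions. If it really is valid, the statement needs a restriction before it can be proved, for example to frames where $R$ only crosses lattices toward type $A$, or to transitivity restricted to worlds sharing a lattice. In that case I would state and prove the restricted version by the monotonicity argument above. If the computation is wrong somewhere, the same case analysis over the lattice types of $w$, $u$ and $t$ (at most $3^3$ cases, reduced by symmetry) completes the proof.
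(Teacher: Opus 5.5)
Your counterexample is correct, and you should commit to it: the ``only if'' direction is false as stated, so no case analysis over lattice types can finish a proof. Concretely, take $U=\{a,a+b,a+c,1\}$, $L_w=L_t=\{0,1,c,a+b\}$, $L_u=\{0,1,a,b+c\}$ and $R=\{(w,u),(u,t),(w,t)\}$, which is transitive. Put $v_t(p)=a+b$ and $v_u(p)=1$. Then:
\begin{itemize}
\item $v_w(\Box p)=1\cdot(a+b)=a+b\in U$;
\item $v_u(\Box p)=(a+b)^{L_u}=a$ and $v_t(\Box p)=1$;
\item hence $v_w(\Box\Box p)=a^{L_w}\cdot 1=0$ and $v_w(\Box p\so\Box\Box p)=c\notin U$.
\end{itemize}
Adding $tRt$ removes any reliance on the empty meet. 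The failure does not depend on the lattice containing the generating atom: with $L_u=\{0,1,b,a+c\}$ you get $v_u(\Box p)=b$ and $b^{L_w}=0$. This is the paper's own intended cross-lattice ``distrust'' at work. The world $w$ accepts $t$'s non-top designated value because they share a lattice, but the intermediate world $u$ does not accept it.

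The paper's proof breaks exactly at the step you isolated. From the fact that two-step successors are one-step successors, it concludes $v_w(\Box\varphi)\le v_w(\Box\Box\varphi)$. That step silently treats $((v_t(\varphi))^{L_u})^{L_w}$ as equal to $(v_t(\varphi))^{L_w}$. In general only $\le$ holds, with equality guaranteed when $L_u\in\{L_w,L_t\}$. Your ``if'' direction via Theorem~\ref{one-direction-preservation} is fine; the paper does not write that direction out.

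What can actually be proved is the following.
\begin{itemize}
\item Validity of $\Box\varphi\so\Box\Box\varphi$ forces transitivity.
\item Transitive frames in which every chain $wRuRt$ has $L_u\in\{L_w,L_t\}$ validate it. This follows from your monotonicity argument together with the fact that down-interpretation preserves meets, since it is right adjoint to the inclusion of $L'$ into $B_8$. This condition covers your first proposed restriction. Your second restriction works only if it means that $R$ never crosses lattices; weakening transitivity itself cannot help, because your counterexample frame is already fully transitive.
\item Transitivity is characterized by $\Box\circ\varphi\so\Box\Box\circ\varphi$, by Theorem~\ref{two-direction-preservation}, in parallel with $5^\circ$.
\end{itemize}
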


\begin{proof}
    In a model $M$ with transitive frame $F$, suppose a world $w$ such that $w \vDash \Box \varphi$. Since $F$ is transitive, $\{u \mid \exists w' w R w' \land w' R u\} \subseteq \{u \mid w R u\}$. Therefore, $v_w(\Box \varphi) \leq v_w(\Box \Box \varphi)$ and so $w \vDash \Box \Box \varphi$. 
\end{proof}

\begin{theorem}\label{Euc_no}
    $F$ is Euclidean is \textbf{not} equivalent to $F \vDash \Diamond \varphi \so \Box \Diamond \varphi$.
\end{theorem}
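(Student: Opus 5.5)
By Theorem~\ref{one-direction-preservation}, validity of $\Diamond \varphi \so \Box \Diamond \varphi$ on an (8-4)-frame already forces Euclideanness, since the formula characterizes that property in K. So the non-equivalence must come from the other direction. I will exhibit a Euclidean (8-4)-frame $F$ together with a valuation on it at which some world fails $\Diamond p \so \Box \Diamond p$. The idea is that a world accepts a value coming from a world of a \emph{different} type only when that value is $1$. A formula that is designated but not top at an accessible world of another type is therefore read as undesignated.

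First I fix names for the algebra. Let $a,b,c$ be the atoms of $B_8^\circ$. Since the algebra is symmetric, I may assume the ultrafilter is $U=\{1,a,a+b,a+c\}$. The three subalgebras are then
$B_1=\{0,1,a,b+c\}$, $B_2=\{0,1,b,a+c\}$ and $B_3=\{0,1,c,a+b\}$,
and their designated values are, respectively, $\{1,a\}$, $\{1,a+c\}$ and $\{1,a+b\}$.

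Next I take the frame $W=\{w,u\}$ with $R=\{(w,u),(u,u)\}$, $I(w)=B_3$ and $I(u)=B_2$. This frame is Euclidean. The only pairs of $R$-successors are $(w;u,u)$ and $(u;u,u)$, and each of them requires only $uRu$, which holds. I set $v_u(p)=a+c$, which is designated in $B_2$ but is not $1$. The value $v_w(p)$ is arbitrary.

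The computation goes as follows, using the down-interpretation of Definition~\ref{def.down.int}.
\begin{enumerate}
\item At $u$: $v_u(\lnot p)=b$, and $u$ sees only itself, so $v_u(\Box\lnot p)=b$. Hence $v_u(\Diamond p)=-b=a+c$.
\item At $w$: $v_u(\lnot p)=b$ is down-interpreted in $B_3$ as $0$, because $0$ is the only element of $B_3$ below $b$. So $v_w(\Box \lnot p)=0$ and $v_w(\Diamond p)=1$, which means $w\vDash\Diamond p$.
\item Also at $w$: $v_u(\Diamond p)=a+c$ is down-interpreted in $B_3$ as $c$, since the elements of $B_3$ below $a+c$ are $0$ and $c$. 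So $v_w(\Box\Diamond p)=c\notin U$.
\end{enumerate}
Consequently $v_w(\Diamond p\so\Box\Diamond p)=-1+c=c$, which is undesignated. Thus $F\nvDash\Diamond p\so\Box\Diamond p$ even though $F$ is Euclidean.

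The main obstacle is bookkeeping rather than any real difficulty. One has to check each down-interpretation carefully, including the outer $(\cdot)^{L_w}$ applied to $\lnot$ and to $+$ in the clauses of Definition~\ref{def.valuation.mlms}. One also has to make sure the types of the worlds are chosen so that the interpretation across lattices really sends $a+c$ to the non-designated value $c$. Choosing $w$ of type $B_1$ instead would fail, because $a+c$ is interpreted there as the designated value $a$. Finally, the reduction to this particular $U$ uses the symmetry of $B_8^\circ$, namely the automorphisms that permute the atoms.
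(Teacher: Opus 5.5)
Your proof is correct and follows the same idea as the paper's: the paper also builds a Euclidean frame (three mutually accessible, reflexive worlds, with $w,u$ in one lattice $L_1$ and $u'$ in another) in which a successor of a different type gives $\Diamond p$ a designated non-top value, and the evaluating world down-interprets that value as undesignated. Your explicit choice $I(w)=B_3$, together with your remark that $B_1$ would fail, is more careful than the paper's version, whose step $(b)^{A}=0$ fails when $L_1$ is the subalgebra containing the atom that generates $U$ (the value is then even designated). For the same reason, your opening heuristic that values from another type are accepted only when equal to $1$ is too strong as stated, although your computation never relies on it.
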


\begin{proof}
    Let $w$, $u$, $u'$ be such that all of them can access each other and themselves. Furthermore, let $L_1 = \{1, 0, a, -a\}$ be the Boolean algebra of $w$ and $u$, and let $L_2 = \{1, 0, b, -b\}$ be the Boolean algebra of $u'$. Consider the valuation $v$ such that $v_w(p) = 0$, $v_u(p) = 0$ and $v_{u'}(p) = b$. Notice that $v_w(\Diamond X) = - \bigwedge \{-0, -0, (-b)^A\} = 1$. Thence $w \vDash \Diamond X$. Similarly, $v_u(\Diamond X) = - \bigwedge \{-0, -0, (-b)^A\} = 1$ and $v_{u'}(\Diamond X) = - \bigwedge \{-0, -0, -b\} = b$. Then $v_w(\Box \Diamond X) = \bigwedge \{1, 1, (b)^A\} = 0$ and, consequently, $w \nvDash \Box \Diamond X$.   
\end{proof}

The problem is deeper in reality. It is not clear whether we can characterize the Euclidean property in the basic language $\mathcal{L}$. However, we can use the extended language to recover the characterization of the Euclidean frames.

\begin{theorem}\label{Euc_yes} $F$ is Euclidean if, and only if, $F \vDash \Diamond \circ \varphi \so \Box \Diamond \circ \varphi$ (we call this axiom $5^\circ$).
\end{theorem}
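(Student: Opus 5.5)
The plan is to show that every formula of the form $\circ\psi$, and every formula built from such formulas using $\Diamond$, $\Box$ and the Boolean connectives, takes only the values $0$ and $1$ at every world. Once that is established, the evaluation of $\Diamond \circ \varphi \so \Box \Diamond \circ \varphi$ in an (8-4)-model reduces to the classical Kripke evaluation of $\Diamond q \so \Box \Diamond q$. Here $q$ is interpreted by the set $S_\varphi = \{u \mid v_u(\circ\varphi) = 1\}$. The equivalence then follows from the classical correspondence for axiom $5$, plus a realizability argument for the converse direction.

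\textbf{Step 1: two-valuedness.} Since $0$ and $1$ belong to all three $4$-valued subalgebras, the down-interpretation fixes them: $0^{L_w} = 0$ and $1^{L_w} = 1$. By definition of $\circledcirc$, $v_u(\circ\varphi) \in \{0,1\}$ at every world $u$. Next I compute $v_w(\Diamond\circ\varphi) = (-\bigwedge_{L_w}\{(-v_u(\circ\varphi))^{L_w} \mid wRu\})^{L_w}$. Every element of the meet lies in $\{0,1\}$, so the meet is $0$ if some accessible $u$ has $v_u(\circ\varphi)=1$, and $1$ otherwise; the empty meet is the top, $1$. Hence $v_w(\Diamond\circ\varphi) = 1$ if and only if $w$ has an $R$-successor in $S_\varphi$, and $v_w(\Diamond\circ\varphi)=0$ otherwise.

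By the same reasoning, $v_w(\Box\Diamond\circ\varphi) = 1$ if and only if every successor of $w$ has a successor in $S_\varphi$, and the value is $0$ otherwise. Finally, $\alpha \so \beta$ abbreviates $\lnot\alpha\lor\beta$, and on $\{0,1\}$-valued arguments $-$ and $+$ (followed by down-interpretation) act classically. So $w \vDash \Diamond\circ\varphi\so\Box\Diamond\circ\varphi$ if and only if the classical condition holds at $w$ for the set $S_\varphi$.

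\textbf{Step 2: Euclidean implies validity.} Assume $F$ is Euclidean and $w \vDash \Diamond\circ\varphi$, so there is $u'$ with $wRu'$ and $u' \in S_\varphi$. For any $u$ with $wRu$, the Euclidean property gives $uRu'$, so $v_u(\Diamond\circ\varphi)=1$. Hence $v_w(\Box\Diamond\circ\varphi)=1$, and the implication holds at $w$.

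\textbf{Step 3: validity implies Euclidean.} Theorem~\ref{one-direction-preservation} cannot be used directly, because the formula contains $\circ$. Indeed, with classical values $v(p)\in\{0,1\}$ the formula $\circ p$ is always $1$, so only frames where every world has a successor would be tested. Instead, suppose $wRu$ and $wRu'$ but not $uRu'$. I define a valuation by $v_{u'}(p) = 1$, and $v_x(p) = z_x$ for every $x \neq u'$, where $z_x$ is a non-extremal element of $I(x)$. Then $S_p = \{u'\}$. It follows that $v_w(\Diamond\circ p) = 1$, while $u$ has no successor in $S_p$, so $v_u(\Diamond\circ p)=0$ and therefore $v_w(\Box\Diamond\circ p) = 0$. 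So $F \nvDash 5^\circ$.

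I expect Step 1 to be the main obstacle. The care needed is in checking that meets over mixed-lattice families, and the empty meet, never produce a non-extremal value, and that the down-interpretation of the complement does not leave $\{0,1\}$. This is also exactly where Theorem~\ref{Euc_no} fails for the plain formula: the value $b$ from another lattice collapses to $0$ under down-interpretation. The $\circ$ operator blocks that collapse.
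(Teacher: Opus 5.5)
Your proposal is correct, and it proves more than the paper's own proof does.

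\textbf{Forward direction.} Your Steps 1 and 2 are the paper's argument for ``Euclidean $\Rightarrow F \vDash 5^\circ$''. The paper argues by contradiction rather than directly. It rests on a one-line remark that $\circ\varphi$, and therefore $\Diamond\circ\varphi$ and $\Box\Diamond\circ\varphi$, take only the values $0$ and $1$. Your Step 1 spells that remark out: down-interpretation fixes $0$ and $1$, and the empty meet is $1$.

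\textbf{Converse direction.} This is where you go beyond the paper. Its proof stops after the forward direction and never shows that validity of $5^\circ$ forces Euclideanness.

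You are right that Theorem~\ref{one-direction-preservation} cannot simply be quoted here. Under $\{0,1\}$-valuations, $\circ p$ is identically $1$, so $5^\circ$ degenerates to $\Diamond\top\so\Box\Diamond\top$, which does not detect Euclideanness.

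Your countermodel is the right repair. You give every world other than $u'$ a non-extremal value, so that $\circ p$ holds exactly at $u'$. It also covers the degenerate failures $u=u'$ (i.e.\ $wRu$ but not $uRu$) and $w=u'$.

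The same encoding fills a gap elsewhere in the paper. Represent a classical valuation $V$ by $v_x(p)=1$ when $V(x,p)=1$ and $v_x(p)=z_x$ otherwise, so that $v_x(\circ p)=V(x,p)$. This is what the first half of Theorem~\ref{two-direction-preservation} actually needs, although the paper attributes that half directly to Theorem~\ref{one-direction-preservation}.
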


\begin{proof}
    Suppose $F$ is Euclidean and that there is a world $w$ in a model $M$ with frame $F$ such that $w \nvDash \Diamond \circ \varphi \so \Box \Diamond \circ \varphi$. Because $\circ \varphi$ can only have values $0$ or $1$ in every world, $v_w(\Diamond \circ \varphi) = 1$ and $v_w(\Box \Diamond \circ \varphi) = 0$. So, there are $u$ such that $v_u(\circ \varphi) = 1$ and $u'$ such that $v_{u'}(\Diamond \circ \varphi) = 0$ where both $u$ and $u'$ are accessible to $w$. Since $F$ is Euclidean, $u'$ accesses $u$, so we obtain the absurd $v_u(\circ \varphi) = 0$.
\end{proof}

The fact that we have a harder time finding the characterization of Euclidean frames in our alternative classical modal logic is twofold. First, it shows a nuanced distinction between the assumptions behind transitive and Euclidean properties in traditional modal logic. The traditional characterization of transitivity resists even if we jump to a more skeptical classical semantics, while the traditional Euclidean characterization depends on the more credulous take on the validities of accessed worlds.

\begin{theorem}\label{two-direction-preservation}
    Let $P$ be a property of frames and $\varphi$ a $\cl^\Box$-formula such that, for every traditional frame $F$, 
    $F \Vdash \varphi$ if, and only if, $F$ is $P$. 
    Let $\varphi'$ be the replacement of every variable $x$ in $\varphi$ for $\circ x$.
    Then $F \vDash \varphi'$ if and only if $F$ is $P$.
\end{theorem}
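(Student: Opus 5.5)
The key observation is that $\circ x$ always takes a value in $\{0,1\}$, and the set $\{0,1\}$ is shared by all three subalgebras and closed under every operation. So replacing each variable $x$ by $\circ x$ should collapse the (8-4)-semantics onto the classical two-valued Kripke semantics. The plan is to show that truth of $\varphi'$ in an (8-4)-model is exactly truth of $\varphi$ in an associated traditional model on the same frame, and conversely. The equivalence with $P$ then follows from the hypothesis that $\varphi$ characterizes $P$ over traditional frames.

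\textbf{Step 1 (closure of $\{0,1\}$).} I will prove by induction on $\mathcal{L}^\Box$-formulas $\psi$ the following claim. Let $M=\pair{W,R,I,v}$ be any (8-4)-model and let $\psi'$ be the result of substituting $\circ x$ for each variable $x$. Then $v_w(\psi')\in\{0,1\}$ for every $w$, and $v_w(\psi')=1$ iff $M^c,w\Vdash\psi$. Here $M^c$ is the traditional model on $\pair{W,R}$ with $M^c,w\Vdash x$ iff $v_w(\circ x)=1$, i.e.\ iff $v_w(x)\in\{0,1\}$.

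The steps of the induction are as follows.
\begin{itemize}
\item Base case: this is immediate from the definition of $\circledcirc$.
\item Case $\lnot$: $\{0,1\}$ is closed under $-$, and down-interpretation fixes elements of $L_w$.
\item Cases $\land$ and $\lor$: the same reasoning applies.
\item Case $\Box$: $v_w(\Box\psi')=\bigwedge_{L_w}\{(v_u(\psi'))^{L_w}\mid wRu\}$. By the induction hypothesis each $v_u(\psi')\in\{0,1\}\subseteq L_w$, so the down-interpretation is the identity. Hence the meet is $1$ iff all successors give $1$, which is exactly the classical clause (the empty meet is $1$). The case $\Diamond$ is defined from these.
\end{itemize}
Since $1\in U$ and $0\notin U$, it follows that $M,w\vDash\psi'$ iff $M^c,w\Vdash\psi$.

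\textbf{Step 2 (surjectivity onto traditional models).} Conversely, every traditional model $N$ on $\pair{W,R}$ arises as $M^c$ for some (8-4)-model $M$ on $F=\pair{W,R,I}$. To see this, set $v_w(x)=1$ if $N,w\Vdash x$, and otherwise set $v_w(x)$ to the non-extremal element $z$ of $I(w)$. This choice works whatever $I$ is.

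\textbf{Step 3 (conclusion).} By Steps 1 and 2, $F\vDash\varphi'$ iff every traditional model on $\pair{W,R}$ satisfies $\varphi$ at every world, i.e.\ iff $\pair{W,R}\Vdash\varphi$. By hypothesis this holds iff $F$ is $P$, where $P$ is understood as a property of the underlying relational frame.

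\textbf{Main obstacle.} The delicate point is the $\Box$-step. One must check that no cross-lattice distortion occurs: the down-interpretation of $0$ and $1$ into any $L_w$ returns $0$ and $1$. This holds because all three subalgebras share top and bottom. One must also handle the case of a world with no successors, where the meet over the empty set is the top element $1$, matching classical vacuous truth.
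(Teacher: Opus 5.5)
Your proof is correct. For the direction ``$F$ is $P$ implies $F\vDash\varphi'$'' it coincides with the paper's: your $M^c$ is the paper's model $N$ (with $s_w(p)=1$ iff $v_w(\circ p)=1$). Your Step~1 is the ``simple inductive argument left to the reader'', written out, including the facts that down-interpretation fixes $0,1$ and that an empty meet is $1$.

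Where you differ is the converse. The paper handles it by citing Theorem~\ref{one-direction-preservation}, but that theorem concerns $\varphi$, not $\varphi'$. Its embedding of traditional models as $\{0,1\}$-valued models makes every $\circ x$ equal to $1$, so it does not by itself give $F\vDash\varphi'\Rightarrow\pair{W,R}\Vdash\varphi$. Nor can one route through $F\vDash\varphi$: the Euclidean frame of Theorem~\ref{Euc_no} validates $5^\circ$ but not $5$.

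Your Step~2 is exactly the construction this direction needs. It encodes falsity of $x$ by the non-extremal value $z$ of $I(w)$, so that $\circ x$ tracks the classical truth value of $x$. Packaging Steps~1 and~2 as ``$M\mapsto M^c$ is a truth-preserving surjection onto the traditional models on $\pair{W,R}$'' yields both directions at once. It also makes explicit that $P$ is read as a property of the underlying relational frame, which the paper leaves implicit.
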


\begin{proof}
   The fact that $F \vDash \varphi'$ implies $F$ is $P$ results from the direct application of \cref{one-direction-preservation}. 

   Suppose $F$ has the property $P$. And that $F \nvDash \varphi'$. Consequently, there is a model $M$ with frame $F$ and valuation $v$ such that $M \nvDash \varphi'$. Now, consider the model $N$ with frame $F$ and valuation $s$ such that
   $$s_w(p) = \begin{cases}
       1, v_w(\circ p) = 1\\
       0, v_w(\circ p) = 0
   \end{cases}$$
   Note that $N$ only attributes values $1$ and $0$ to the propositional variables. Therefore, $N$ is equivalent to a K model. As a result, $N \Vdash \varphi$. A simple inductive argument (left to the reader) establishes that $N \Vdash \psi$ if, and only if, $M \vDash \psi'$. So we obtain the contradiction $M \vDash \varphi'$.

\end{proof}

This result shows that every property of traditional modal logic can be characterized in $\mathcal{L}^{\Box,\circ}$ within the proposed framework. The next step is to determine whether we can also characterize additional properties, those that concern not only the dynamics of accessibility, but also the dynamics of accessibility in conjunction with changes in the underlying lattice. In this direction, consider the following property:

\begin{definition}
    A frame $F$ is out of the bubble when, for every $w$ in $F$, if $w$ accesses a world, then it accesses a world $w'$ such that $I(w) \neq I(w')$. In other words, every world that accesses at least one world also accesses a world that does not have the same base lattice.
\end{definition}

Consider these examples:

\begin{figure}[H]
    \centering
    \tikzset{every picture/.style={line width=0.75pt}} %set default line width to 0.75pt        
    \begin{tikzpicture}[x=0.75pt,y=0.75pt,yscale=-1,xscale=1]
%uncomment if require: \path (0,300); %set diagram left start at 0, and has height of 300

%Shape: Circle [id:dp6273937196242955] 
\draw  [fill={rgb, 255:red, 0; green, 0; blue, 0 }  ,fill opacity=1 ] (95,100.5) .. controls (95,97.46) and (97.46,95) .. (100.5,95) .. controls (103.54,95) and (106,97.46) .. (106,100.5) .. controls (106,103.54) and (103.54,106) .. (100.5,106) .. controls (97.46,106) and (95,103.54) .. (95,100.5) -- cycle ;
%Shape: Circle [id:dp32084777611716175] 
\draw   (53,139.5) .. controls (53,136.46) and (55.46,134) .. (58.5,134) .. controls (61.54,134) and (64,136.46) .. (64,139.5) .. controls (64,142.54) and (61.54,145) .. (58.5,145) .. controls (55.46,145) and (53,142.54) .. (53,139.5) -- cycle ;
%Straight Lines [id:da17563675500013964] 
\draw    (58.5,139.5) -- (100.5,100.5) ;
\draw [shift={(83.16,116.6)}, rotate = 137.12] [fill={rgb, 255:red, 0; green, 0; blue, 0 }  ][line width=0.08]  [draw opacity=0] (10.72,-5.15) -- (0,0) -- (10.72,5.15) -- (7.12,0) -- cycle    ;
%Shape: Circle [id:dp34274795636653577] 
\draw   (139.5,176) .. controls (139.5,172.96) and (141.96,170.5) .. (145,170.5) .. controls (148.04,170.5) and (150.5,172.96) .. (150.5,176) .. controls (150.5,179.04) and (148.04,181.5) .. (145,181.5) .. controls (141.96,181.5) and (139.5,179.04) .. (139.5,176) -- cycle ;
%Shape: Circle [id:dp7756564585688032] 
\draw   (54,198.5) .. controls (54,195.46) and (56.46,193) .. (59.5,193) .. controls (62.54,193) and (65,195.46) .. (65,198.5) .. controls (65,201.54) and (62.54,204) .. (59.5,204) .. controls (56.46,204) and (54,201.54) .. (54,198.5) -- cycle ;
%Shape: Circle [id:dp8361231569947132] 
\draw   (113,142.5) .. controls (113,139.46) and (115.46,137) .. (118.5,137) .. controls (121.54,137) and (124,139.46) .. (124,142.5) .. controls (124,145.54) and (121.54,148) .. (118.5,148) .. controls (115.46,148) and (113,145.54) .. (113,142.5) -- cycle ;
%Straight Lines [id:da07673124133477582] 
\draw    (58.5,139.5) -- (118.5,142.5) ;
\draw [shift={(93.49,141.25)}, rotate = 182.86] [fill={rgb, 255:red, 0; green, 0; blue, 0 }  ][line width=0.08]  [draw opacity=0] (10.72,-5.15) -- (0,0) -- (10.72,5.15) -- (7.12,0) -- cycle    ;
%Straight Lines [id:da21957312335714962] 
\draw    (58.5,139.5) -- (59.5,198.5) ;
\draw [shift={(59.08,174)}, rotate = 269.03] [fill={rgb, 255:red, 0; green, 0; blue, 0 }  ][line width=0.08]  [draw opacity=0] (10.72,-5.15) -- (0,0) -- (10.72,5.15) -- (7.12,0) -- cycle    ;
%Straight Lines [id:da7329043817894998] 
\draw    (118.5,142.5) -- (145,176) ;
\draw [shift={(134.85,163.17)}, rotate = 231.65] [fill={rgb, 255:red, 0; green, 0; blue, 0 }  ][line width=0.08]  [draw opacity=0] (10.72,-5.15) -- (0,0) -- (10.72,5.15) -- (7.12,0) -- cycle    ;
%Straight Lines [id:da062133571535096355] 
\draw    (118.5,142.5) -- (100.5,100.5) ;
\draw [shift={(107.53,116.9)}, rotate = 66.8] [fill={rgb, 255:red, 0; green, 0; blue, 0 }  ][line width=0.08]  [draw opacity=0] (10.72,-5.15) -- (0,0) -- (10.72,5.15) -- (7.12,0) -- cycle    ;
%Shape: Circle [id:dp044872647134939325] 
\draw  [fill={rgb, 255:red, 0; green, 0; blue, 0 }  ,fill opacity=1 ] (97,199.5) .. controls (97,196.46) and (99.46,194) .. (102.5,194) .. controls (105.54,194) and (108,196.46) .. (108,199.5) .. controls (108,202.54) and (105.54,205) .. (102.5,205) .. controls (99.46,205) and (97,202.54) .. (97,199.5) -- cycle ;
%Straight Lines [id:da6502355451178218] 
\draw    (59.5,198.5) -- (102.5,199.5) ;
\draw [shift={(86,199.12)}, rotate = 181.33] [fill={rgb, 255:red, 0; green, 0; blue, 0 }  ][line width=0.08]  [draw opacity=0] (10.72,-5.15) -- (0,0) -- (10.72,5.15) -- (7.12,0) -- cycle    ;
%Straight Lines [id:da5501950511651206] 
\draw    (102.5,199.5) -- (145,176) ;
\draw [shift={(128.13,185.33)}, rotate = 151.06] [fill={rgb, 255:red, 0; green, 0; blue, 0 }  ][line width=0.08]  [draw opacity=0] (10.72,-5.15) -- (0,0) -- (10.72,5.15) -- (7.12,0) -- cycle    ;
%Shape: Circle [id:dp37718665890332714] 
\draw  [fill={rgb, 255:red, 0; green, 0; blue, 0 }  ,fill opacity=1 ] (264,100.5) .. controls (264,97.46) and (266.46,95) .. (269.5,95) .. controls (272.54,95) and (275,97.46) .. (275,100.5) .. controls (275,103.54) and (272.54,106) .. (269.5,106) .. controls (266.46,106) and (264,103.54) .. (264,100.5) -- cycle ;
%Shape: Circle [id:dp21635978943163037] 
\draw   (222,139.5) .. controls (222,136.46) and (224.46,134) .. (227.5,134) .. controls (230.54,134) and (233,136.46) .. (233,139.5) .. controls (233,142.54) and (230.54,145) .. (227.5,145) .. controls (224.46,145) and (222,142.54) .. (222,139.5) -- cycle ;
%Straight Lines [id:da33094906962136217] 
\draw    (227.5,139.5) -- (269.5,100.5) ;
\draw [shift={(252.16,116.6)}, rotate = 137.12] [fill={rgb, 255:red, 0; green, 0; blue, 0 }  ][line width=0.08]  [draw opacity=0] (10.72,-5.15) -- (0,0) -- (10.72,5.15) -- (7.12,0) -- cycle    ;
%Shape: Circle [id:dp17289999853769822] 
\draw   (308.5,176) .. controls (308.5,172.96) and (310.96,170.5) .. (314,170.5) .. controls (317.04,170.5) and (319.5,172.96) .. (319.5,176) .. controls (319.5,179.04) and (317.04,181.5) .. (314,181.5) .. controls (310.96,181.5) and (308.5,179.04) .. (308.5,176) -- cycle ;
%Shape: Circle [id:dp49918568469120483] 
\draw   (223,198.5) .. controls (223,195.46) and (225.46,193) .. (228.5,193) .. controls (231.54,193) and (234,195.46) .. (234,198.5) .. controls (234,201.54) and (231.54,204) .. (228.5,204) .. controls (225.46,204) and (223,201.54) .. (223,198.5) -- cycle ;
%Shape: Circle [id:dp17908602874595447] 
\draw   (282,142.5) .. controls (282,139.46) and (284.46,137) .. (287.5,137) .. controls (290.54,137) and (293,139.46) .. (293,142.5) .. controls (293,145.54) and (290.54,148) .. (287.5,148) .. controls (284.46,148) and (282,145.54) .. (282,142.5) -- cycle ;
%Straight Lines [id:da3487303309748021] 
\draw    (227.5,139.5) -- (287.5,142.5) ;
\draw [shift={(262.49,141.25)}, rotate = 182.86] [fill={rgb, 255:red, 0; green, 0; blue, 0 }  ][line width=0.08]  [draw opacity=0] (10.72,-5.15) -- (0,0) -- (10.72,5.15) -- (7.12,0) -- cycle    ;
%Straight Lines [id:da1396505047430886] 
\draw    (227.5,139.5) -- (228.5,198.5) ;
\draw [shift={(228.08,174)}, rotate = 269.03] [fill={rgb, 255:red, 0; green, 0; blue, 0 }  ][line width=0.08]  [draw opacity=0] (10.72,-5.15) -- (0,0) -- (10.72,5.15) -- (7.12,0) -- cycle    ;
%Straight Lines [id:da18487266722817242] 
\draw    (287.5,142.5) -- (314,176) ;
\draw [shift={(303.85,163.17)}, rotate = 231.65] [fill={rgb, 255:red, 0; green, 0; blue, 0 }  ][line width=0.08]  [draw opacity=0] (10.72,-5.15) -- (0,0) -- (10.72,5.15) -- (7.12,0) -- cycle    ;
%Shape: Circle [id:dp598029832378986] 
\draw  [fill={rgb, 255:red, 0; green, 0; blue, 0 }  ,fill opacity=1 ] (266,199.5) .. controls (266,196.46) and (268.46,194) .. (271.5,194) .. controls (274.54,194) and (277,196.46) .. (277,199.5) .. controls (277,202.54) and (274.54,205) .. (271.5,205) .. controls (268.46,205) and (266,202.54) .. (266,199.5) -- cycle ;
%Straight Lines [id:da37387200021709566] 
\draw    (228.5,198.5) -- (271.5,199.5) ;
\draw [shift={(255,199.12)}, rotate = 181.33] [fill={rgb, 255:red, 0; green, 0; blue, 0 }  ][line width=0.08]  [draw opacity=0] (10.72,-5.15) -- (0,0) -- (10.72,5.15) -- (7.12,0) -- cycle    ;
%Straight Lines [id:da47029456440068074] 
\draw    (271.5,199.5) -- (314,176) ;
\draw [shift={(297.13,185.33)}, rotate = 151.06] [fill={rgb, 255:red, 0; green, 0; blue, 0 }  ][line width=0.08]  [draw opacity=0] (10.72,-5.15) -- (0,0) -- (10.72,5.15) -- (7.12,0) -- cycle    ;
%Shape: Circle [id:dp7132950476990592] 
\draw  [fill={rgb, 255:red, 0; green, 0; blue, 0 }  ,fill opacity=1 ] (445,99.5) .. controls (445,96.46) and (447.46,94) .. (450.5,94) .. controls (453.54,94) and (456,96.46) .. (456,99.5) .. controls (456,102.54) and (453.54,105) .. (450.5,105) .. controls (447.46,105) and (445,102.54) .. (445,99.5) -- cycle ;
%Shape: Circle [id:dp06484345177918294] 
\draw   (403,138.5) .. controls (403,135.46) and (405.46,133) .. (408.5,133) .. controls (411.54,133) and (414,135.46) .. (414,138.5) .. controls (414,141.54) and (411.54,144) .. (408.5,144) .. controls (405.46,144) and (403,141.54) .. (403,138.5) -- cycle ;
%Straight Lines [id:da450052837473657] 
\draw    (408.5,138.5) -- (450.5,99.5) ;
\draw [shift={(433.16,115.6)}, rotate = 137.12] [fill={rgb, 255:red, 0; green, 0; blue, 0 }  ][line width=0.08]  [draw opacity=0] (10.72,-5.15) -- (0,0) -- (10.72,5.15) -- (7.12,0) -- cycle    ;
%Shape: Circle [id:dp17279652486360242] 
\draw  [fill={rgb, 255:red, 0; green, 0; blue, 0 }  ,fill opacity=1 ] (489.5,175) .. controls (489.5,171.96) and (491.96,169.5) .. (495,169.5) .. controls (498.04,169.5) and (500.5,171.96) .. (500.5,175) .. controls (500.5,178.04) and (498.04,180.5) .. (495,180.5) .. controls (491.96,180.5) and (489.5,178.04) .. (489.5,175) -- cycle ;
%Shape: Circle [id:dp5580792212070171] 
\draw   (404,197.5) .. controls (404,194.46) and (406.46,192) .. (409.5,192) .. controls (412.54,192) and (415,194.46) .. (415,197.5) .. controls (415,200.54) and (412.54,203) .. (409.5,203) .. controls (406.46,203) and (404,200.54) .. (404,197.5) -- cycle ;
%Shape: Circle [id:dp5656993622903743] 
\draw   (463,141.5) .. controls (463,138.46) and (465.46,136) .. (468.5,136) .. controls (471.54,136) and (474,138.46) .. (474,141.5) .. controls (474,144.54) and (471.54,147) .. (468.5,147) .. controls (465.46,147) and (463,144.54) .. (463,141.5) -- cycle ;
%Straight Lines [id:da6335193794833999] 
\draw    (408.5,138.5) -- (468.5,141.5) ;
\draw [shift={(443.49,140.25)}, rotate = 182.86] [fill={rgb, 255:red, 0; green, 0; blue, 0 }  ][line width=0.08]  [draw opacity=0] (10.72,-5.15) -- (0,0) -- (10.72,5.15) -- (7.12,0) -- cycle    ;
%Straight Lines [id:da5012068388476875] 
\draw    (408.5,138.5) -- (409.5,197.5) ;
\draw [shift={(409.08,173)}, rotate = 269.03] [fill={rgb, 255:red, 0; green, 0; blue, 0 }  ][line width=0.08]  [draw opacity=0] (10.72,-5.15) -- (0,0) -- (10.72,5.15) -- (7.12,0) -- cycle    ;
%Straight Lines [id:da22156442461919723] 
\draw    (468.5,141.5) -- (495,175) ;
\draw [shift={(484.85,162.17)}, rotate = 231.65] [fill={rgb, 255:red, 0; green, 0; blue, 0 }  ][line width=0.08]  [draw opacity=0] (10.72,-5.15) -- (0,0) -- (10.72,5.15) -- (7.12,0) -- cycle    ;
%Straight Lines [id:da6804642783350874] 
\draw    (468.5,141.5) -- (450.5,99.5) ;
\draw [shift={(457.53,115.9)}, rotate = 66.8] [fill={rgb, 255:red, 0; green, 0; blue, 0 }  ][line width=0.08]  [draw opacity=0] (10.72,-5.15) -- (0,0) -- (10.72,5.15) -- (7.12,0) -- cycle    ;
%Shape: Circle [id:dp4253029797300094] 
\draw  [fill={rgb, 255:red, 0; green, 0; blue, 0 }  ,fill opacity=1 ] (447,198.5) .. controls (447,195.46) and (449.46,193) .. (452.5,193) .. controls (455.54,193) and (458,195.46) .. (458,198.5) .. controls (458,201.54) and (455.54,204) .. (452.5,204) .. controls (449.46,204) and (447,201.54) .. (447,198.5) -- cycle ;
%Straight Lines [id:da26295417617593597] 
\draw    (409.5,197.5) -- (452.5,198.5) ;
\draw [shift={(436,198.12)}, rotate = 181.33] [fill={rgb, 255:red, 0; green, 0; blue, 0 }  ][line width=0.08]  [draw opacity=0] (10.72,-5.15) -- (0,0) -- (10.72,5.15) -- (7.12,0) -- cycle    ;
%Straight Lines [id:da6951099179041051] 
\draw    (452.5,198.5) -- (495,175) ;
\draw [shift={(478.13,184.33)}, rotate = 151.06] [fill={rgb, 255:red, 0; green, 0; blue, 0 }  ][line width=0.08]  [draw opacity=0] (10.72,-5.15) -- (0,0) -- (10.72,5.15) -- (7.12,0) -- cycle    ;
%Straight Lines [id:da312554747857563] 
\draw  [dash pattern={on 0.84pt off 2.51pt}]  (179,52) -- (180,210.5) ;
%Straight Lines [id:da9467952111840804] 
\draw  [dash pattern={on 0.84pt off 2.51pt}]  (371,52) -- (372,210.5) ;

% Text Node
\draw (41,121.4) node [anchor=north west][inner sep=0.75pt]    {$a$};
% Text Node
\draw (36,189.4) node [anchor=north west][inner sep=0.75pt]    {$b$};
% Text Node
\draw (96,212.4) node [anchor=north west][inner sep=0.75pt]    {$c$};
% Text Node
\draw (152.5,179.4) node [anchor=north west][inner sep=0.75pt]    {$d$};
% Text Node
\draw (129,127.4) node [anchor=north west][inner sep=0.75pt]    {$e$};
% Text Node
\draw (94,73.4) node [anchor=north west][inner sep=0.75pt]    {$f$};
% Text Node
\draw (48,39) node [anchor=north west][inner sep=0.75pt]   [align=left] {$\displaystyle F_{1}$: Out of bubble};
% Text Node
\draw (213,41) node [anchor=north west][inner sep=0.75pt]   [align=left] {$\displaystyle F_{2}$:\textbf{ Not} Out of bubble};
% Text Node
\draw (388,42) node [anchor=north west][inner sep=0.75pt]   [align=left] {$\displaystyle F_{3}$:\textbf{ Not} Out of bubble};
% Text Node
\draw (210,121.4) node [anchor=north west][inner sep=0.75pt]    {$a$};
% Text Node
\draw (205,189.4) node [anchor=north west][inner sep=0.75pt]    {$b$};
% Text Node
\draw (265,212.4) node [anchor=north west][inner sep=0.75pt]    {$c$};
% Text Node
\draw (321.5,179.4) node [anchor=north west][inner sep=0.75pt]    {$d$};
% Text Node
\draw (298,127.4) node [anchor=north west][inner sep=0.75pt]    {$e$};
% Text Node
\draw (263,73.4) node [anchor=north west][inner sep=0.75pt]    {$f$};
% Text Node
\draw (391,120.4) node [anchor=north west][inner sep=0.75pt]    {$a$};
% Text Node
\draw (386,188.4) node [anchor=north west][inner sep=0.75pt]    {$b$};
% Text Node
\draw (446,211.4) node [anchor=north west][inner sep=0.75pt]    {$c$};
% Text Node
\draw (502.5,178.4) node [anchor=north west][inner sep=0.75pt]    {$d$};
% Text Node
\draw (479,126.4) node [anchor=north west][inner sep=0.75pt]    {$e$};
% Text Node
\draw (444,72.4) node [anchor=north west][inner sep=0.75pt]    {$f$};

\end{tikzpicture}
    \caption{Example of out of bubble frame}
    \label{fig-example}
\end{figure}
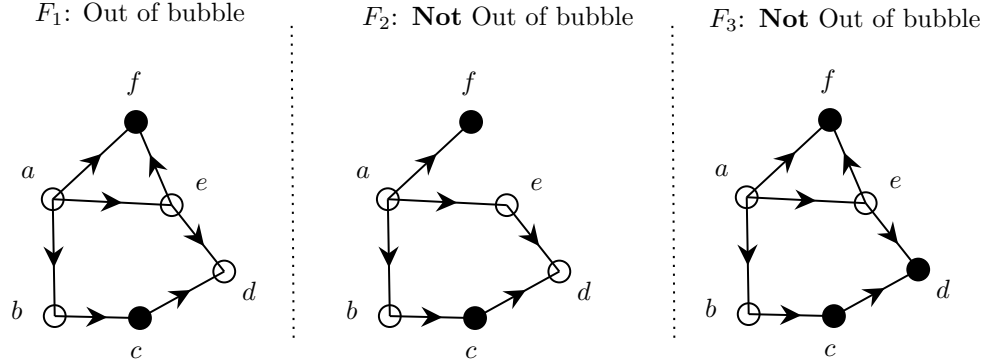

The filled circles represent worlds operating under one logic, while the empty circles represent worlds operating under another logic. Observe that, in the frame $F_1$, every world that has at least one accessible world accesses a world in a different logic. Specifically, $a$, $b$, $c$, and $e$ are worlds with accessible worlds in different logics: $a$ accesses $f$, $b$ accesses $c$, $c$ accesses $d$, and $e$ accesses $f$. In contrast, frames $F_2$ and $F_3$ do not exhibit the out-of-bubble property. In $F_2$, $e$ accesses another world, but not one operating in a different logic. Similarly, in $F_3$, $c$ accesses a world, but only one of the same logical kind.

\begin{notation}
    We use $\top$ to refer to the formula $\circ p \lor \lnot \circ p$ for a given variable $p$, and we use $\bot$ to refer to the formula $\circ p \land \lnot \circ p$.
\end{notation}

\begin{theorem}\label{outofbubble}
    $F$ is out of the bubble if and only if $F \vDash \Diamond \top \so (\Box \lnot \circ \varphi \so \lnot \Box \varphi)$.
\end{theorem}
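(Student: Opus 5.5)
The plan is to prove both directions directly from the semantics of Definition~\ref{def.valuation.mlms}. Two facts about the operators involved will do most of the work.

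First, $\circ\psi$, $\lnot\circ\psi$, $\top$ and $\bot$ only ever take the values $0$ or $1$ in every world. Since $0$ and $1$ belong to every $B_i$, their down-interpretation into any $L_w$ is themselves. Consequently $v_w(\Diamond\top)=1$ iff $w$ has at least one $R$-successor, and $v_w(\Diamond\top)=0$ otherwise. Likewise $v_w(\Box\lnot\circ\varphi)=1$ iff every successor $u$ of $w$ has $v_u(\varphi)\notin\{0,1\}$, and it is $0$ otherwise. So the antecedents of the formula are designated exactly in these situations.

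Second, I will use the behaviour of cross-lattice evaluation that motivates the construction in Section~\ref{sec:four-eight-logics}: a value $x\notin\{0,1\}$ of a world $u$ with $I(u)\neq I(w)$ is relativised to $L_w$ as a non-designated value, so that "$u$ counts as supporting $\varphi$ for $w$ only when $v_u(\varphi)=1$". This is the step I expect to be the main obstacle. It requires a careful check of the down-interpretations $x^{L_w}$ for the six non-extremal elements of $B_8^\circ$ against the chosen ultrafilter $U$. I would do this first, case by case, using the notation $\{1,z\}=B\cap U$, and confirm that $(x)^{L_w}\in\{0,-z\}$ for $x\in I(u)\setminus\{0,1\}$. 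If some coatom turned out to relativise to the designated $z$, I would instead pick the witnessing value in the proofs below to be the non-designated element of $I(u)$. That element is also a non-extremal value, so it still satisfies $\lnot\circ\varphi$.

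($\Rightarrow$) Assume $F$ is out of the bubble, and take any model and world $w$ with $w\vDash\Diamond\top$ and $w\vDash\Box\lnot\circ\varphi$. By the first fact, $w$ has a successor, so by hypothesis it has a successor $w'$ with $I(w')\neq I(w)$. Again by the first fact, $v_{w'}(\varphi)\notin\{0,1\}$. By the second fact, $(v_{w'}(\varphi))^{L_w}$ is non-designated, so the meet $v_w(\Box\varphi)$ lies below a non-designated element. Since $U$ is upward closed, $v_w(\Box\varphi)\notin U$, and hence $w\vDash\lnot\Box\varphi$.

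($\Leftarrow$) Argue contrapositively. Suppose some $w$ has successors, all of which satisfy $I(u)=I(w)=\{1,0,z,-z\}$. Define a valuation with $v_u(p)=z$ for every world $u$ in $I(w)$, and arbitrary values elsewhere. Then $v_w(\Diamond\top)=1$ and $v_w(\Box\lnot\circ p)=1$. Meanwhile $v_w(\Box p)$ is the meet in $L_w$ of copies of $z$, which equals $z\in U$, so $w\nvDash\lnot\Box p$. The formula therefore fails at $w$, and $F\nvDash\Diamond\top\so(\Box\lnot\circ\varphi\so\lnot\Box\varphi)$.
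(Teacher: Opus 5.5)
Your ($\Leftarrow$) direction is correct and is the paper's argument: give the designated middle value $z$ to the same-lattice successors, so that $\Box\lnot\circ p$ and $\Box p$ are both designated at $w$.

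The gap is your ``second fact'', on which ($\Rightarrow$) rests, and your fallback cannot repair it. In ($\Rightarrow$) the model and $\varphi$ are universally quantified, so there is no witnessing value for you to choose. You need \emph{every} non-extremal value of a successor in a different lattice to relativise to a non-designated element of $L_w$, and the case check you propose fails.

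Represent $B_8^\circ$ as the subsets of $\{1,2,3\}$, so the three subalgebras are $B_i=\{\emptyset,\{i\},\{j,k\},\{1,2,3\}\}$. Every ultrafilter of $B_8^\circ$ is generated by an atom, so up to renaming $U=\{x \mid 1\in x\}$. The designated middle value is then the atom $\{1\}$ in $B_1$, but the coatoms $\{1,3\}$ and $\{1,2\}$ in $B_2$ and $B_3$. Hence $(\{1,3\})^{B_1}=\bigvee_{B_1}\{\emptyset,\{1\}\}=\{1\}\in U$.

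Concretely, let $W=\{w,u\}$, $R=\{(w,u)\}$, $I(w)=B_1$, $I(u)=B_2$ and $v_u(p)=\{1,3\}$. This frame is out of the bubble, and $v_w(\Diamond\top)=v_w(\Box\lnot\circ p)=1$. Yet $v_w(\Box p)=\{1\}\in U$, so $v_w(\lnot\Box p)=\{2,3\}\notin U$, and the formula takes the non-designated value $\{2,3\}$ at $w$.

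So ($\Rightarrow$) is not just unproved but false as stated. Your relativisation claim does hold for worlds in $B_2$ and $B_3$: there, foreign middle values go to $0$ or $-z$, and your argument goes through. It fails for worlds in the subalgebra containing the atom that generates $U$, where foreign designated middle values are seen as designated.

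The paper's proof has the same defect: at this step it simply asserts $(v_{w'}(\varphi))^A=0$ because $w'$ has a different lattice. Your proposal therefore reproduces its structure, flaw included.

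Rerunning your two arguments with this correction shows what the formula actually characterizes. It holds on $F$ exactly when $F$ is out of the bubble \emph{and} no world whose lattice is the distinguished subalgebra has a successor. At such a world with a successor, assigning each successor its own designated middle value falsifies the formula.
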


\begin{proof}
    Suppose $F \vDash \Diamond \top \so (\Box \lnot \circ x \so \lnot \Box x)$ and that $F$ is not out of the bubble. 
    As a result, there is a world $w$ in a model $M$ with frame $F$ such that $w$ accesses at least one world and every world $w'$ accessed by $w$ is such that $I(w) = I(w') = \{0,1,a,-a\}$. Then it follows that $w \vDash \Diamond \top$. Now, consider the valuation $v$ such that, for all $u$ in $F$, $v_u(x) = a$. Consequently, $v_w(\Box \lnot \circ x) = 1$ and $w \vDash \Box x$. This is absurd since $w \vDash \Box \lnot \circ x \so \lnot \Box x$.

    Now suppose $F$ is out of the bubble and $F \nvDash \Diamond \top \so (\Box \lnot \circ \varphi \so \lnot \Box \varphi)$. So there is a model $M$ with world $w$ with lattice $A = \{0,1,a,-a\}$ such that $w \vDash \Diamond \top$, $w \vDash \Box \lnot \circ \varphi$ and $w \vDash \Box \varphi$. Since $w \vDash \Diamond \top$, $w$ has access to at least one world. So, there is $w'$ in $F$ with a lattice different from the lattice in $w$ such that $w R w'$. As a result, $v_{w'}(\circ \varphi) = 0$ and, because it has a different lattice, $(v_{w'}(\varphi))^A = 0$. Thus, it follows that $v_w(\Box \varphi) = 0$, which contradicts $w \vDash \Box \varphi$.
\end{proof}

We have characterized with \cref{outofbubble} the property of accessing worlds operating in different lattices. However, our system includes three distinct Boolean algebras. Can we extend this characterization to account for accessibility between worlds of both kinds? To explore this, let us first define the property precisely:

\begin{definition}
    A frame $F$ is \textbf{super} out of the bubble when 
    \begin{enumerate}
        \item it is out of the bubble and,
        \item if $w$ in $F$ accesses a world $w'$ such that $I(w') \neq I(w)$, then $w$ accesses $w''$ such that  $I(w) \neq I(w') \neq I(w'') \neq I(w)$.
    \end{enumerate}
\end{definition}

This definition is similar to the out of bubble property. The difference, however, is that in this case every world that accesses at least one other world must access worlds of two distinct types. In our specific setting, in a super out of bubble frame, a world of lattice $A$ that has accessible world must also access at least one world from lattice $B$ and at least one world from lattice $C$.

\begin{theorem}\label{soobNotCharacterizable}
    No formula characterizes super out of the bubble.
\end{theorem}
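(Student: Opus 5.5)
The strategy is to exhibit two finite frames, one super out of the bubble and one not, that validate exactly the same formulas of $\mathcal{L}^{\Box,\circ}$. If some formula $\varphi$ characterized super out of the bubble, it would have to be valid on the first frame and not on the second, which is impossible.

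Let $p,q,r$ be the three atoms of $B_8^\circ$, and let $U$ be the principal ultrafilter generated by $p$. Write $B_p, B_q, B_r$ for the four-valued subalgebras generated by these atoms; each $B_x$ is $\{0,1,x,-x\}$.

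I would use the following two frames.
\begin{itemize}
\item $F$ has worlds $w,u,u'$, with $I(w)=B_p$, $I(u)=B_q$, $I(u')=B_r$, and $R=\{(w,u),(w,u')\}$.
\item $F'$ has worlds $w,u_1,u_2$, with $I(w)=B_p$, $I(u_1)=I(u_2)=B_q$, and $R=\{(w,u_1),(w,u_2)\}$.
\end{itemize}
In both frames $u,u',u_1,u_2$ access nothing, and $w$ accesses only worlds of a lattice different from its own. So both frames are out of the bubble. $F$ is super out of the bubble, while $F'$ is not, because $w$ never reaches a world with lattice $B_r$.

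Let $\sigma$ be the Boolean automorphism of $B_8^\circ$ that swaps the atoms $q$ and $r$ and fixes $p$. It commutes with $\circledcirc$ and with the Boolean operations, it preserves $U$, it fixes $B_p$, and it maps $B_q$ onto $B_r$.

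Given a model on $F'$, I build a model on $F$ by copying the valuation of $w$ to $w$ and that of $u_1$ to $u$, and by setting $v_{u'}(x)=\sigma(v_{u_2}(x))$ for every variable $x$. The inverse construction uses $\sigma^{-1}$, so this gives a bijection between the models on the two frames.

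I then prove by induction on formulas, for every $\psi$:
\[
v_w(\psi)=v'_w(\psi),\qquad v_u(\psi)=v'_{u_1}(\psi),\qquad v_{u'}(\psi)=\sigma(v'_{u_2}(\psi)).
\]
At the dead-end worlds the $\Box$ clause gives $1$, so the claims there reduce to $\sigma$ commuting with the $\mathcal{B}_4^\circ$ operations. At $w$, the key computation is that the down-interpretation of any value of $B_q$ or $B_r$ into $B_p$ is $1$ if the value is $1$ and $0$ otherwise. For instance $q_{down}^{B_p}=0$, because $p\not\le q$. Hence
\[
v_w(\Box\psi)=\bigwedge\{\chi(v_u(\psi)),\ \chi(v_{u'}(\psi))\},
\]
where $\chi(y)=1$ if $y=1$ and $\chi(y)=0$ otherwise. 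Since $\sigma(y)=1$ iff $y=1$, this equals the corresponding value in $F'$.

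Designated values are preserved because $\sigma(U)=U$. So a model on $F$ falsifies $\psi$ somewhere iff the corresponding model on $F'$ does, and therefore $F\vDash\psi$ iff $F'\vDash\psi$ for every formula $\psi$. This yields the theorem.

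The step I expect to be the main obstacle is fixing the lattices so that $\sigma$ really preserves $U$. This is why $w$ must live in the subalgebra generated by the atom of $U$. If $w$ lived in $B_q$, say, then the two "foreign" lattices would be $B_p$ and $B_r$, and they are not interchangeable: $p\in U$ while $r\notin U$. The down-interpretations of $p$ and $r$ into $B_q$ are both $0$, so the $\Box$ clause at $w$ still agrees. But validity at the dead-end world in $B_p$ versus $B_r$ is governed by different designated sets, so the bijection of models would break.

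A second point to check is that the argument needs dead ends. If $u$ or $u'$ accessed further worlds, their $\Box$ values would depend on down-interpretations into $B_q$ and $B_r$, and the symmetry would have to be re-verified. Keeping the accessed worlds terminal avoids this.

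As a fallback I would follow the route hinted at by Corollary~\ref{valuation_canonical_model_2lattices}. The logic $\vdash$ is complete even for canonical models using only $B_1$ and $B_2$, and those models cannot be super out of the bubble. From this I would argue that no formula valid on all super out of the bubble frames can fail on such two-lattice frames.
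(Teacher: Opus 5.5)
Your proof is correct, and its core is the paper's. Both use two frames, one super out of the bubble and one not, whose roots see two worlds. The two frames are related by a relabeling that swaps the two lattices foreign to the root and cannot be detected from the root; your automorphism $\sigma$ plays the role of the paper's map $eq$.

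The execution differs. The paper places each successor of the root in a fully connected three-world cluster with lattices $A$, $B$, $C$. It uses the isomorphism of these clusters to force any failure in $F'$ to occur at the root, and then transports that single failure to $F$. You use dead-end successors and a bijection between all models on the two frames, checked by one simultaneous induction, so no localization step is needed.

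What each route buys:
\begin{itemize}
\item The paper's heavier frames are serial, so its pair also shows that super out of the bubble is not definable even among serial frames. Your dead ends do not give this.
\item Your construction gives a short, fully checkable induction.
\item Because your relabeling is a $U$-preserving automorphism of $B_8^\circ$ fixing $B_p$ pointwise, you make explicit that the root must lie in the subalgebra generated by the atom of $U$. The paper's $eq$ never says which subalgebra $A$ is, and under the literal down-interpretation this choice matters.
\end{itemize}

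One step needs a different justification. With the paper's definition of down-interpretation, foreign values do not collapse to $0$ or $1$ in $B_p$. Since $p\le p+r$ and $p\le p+q$, you get $(p+r)^{B_p}_{down}=(p+q)^{B_p}_{down}=p$. So your formula $v_w(\Box\psi)=\bigwedge\{\chi(v_u(\psi)),\chi(v_{u'}(\psi))\}$ is false as stated. It matches how the paper computes in the proofs of Theorems~\ref{Euc_no} and~\ref{outofbubble}, not its definition.

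The fix is short. All you need is $(\sigma y)^{B_p}_{down}=y^{B_p}_{down}$ for $y\in B_q$. This holds because, for every $t\in B_p$, $t\le\sigma y$ iff $\sigma^{-1}t\le y$ iff $t\le y$, as $\sigma^{-1}$ fixes $B_p$ pointwise. With that line, the $\Box$ case at $w$ goes through under either reading.

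Finally, drop the fallback through Corollary~\ref{valuation_canonical_model_2lattices}. It yields a two-lattice \emph{model} in which $\varphi$ holds, not a two-lattice \emph{frame} validating $\varphi$, so it cannot refute a frame characterization.
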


\begin{proof}
    We will produce a frame $F$ that is super out of the bubble and a frame $F'$ that is not, and we will show that $F \vDash \varphi$ if and only if $F' \vDash \varphi$ for every formula $\varphi$. Assuming this, suppose that there is a formula $\theta$ that characterizes super out of the bubble. Because $F$ is super out of the bubble, $F \vDash \theta$; From the equivalence, this implies $F' \vDash \theta$; since $\theta$ characterizes super out of the bubble, we conclude that $F'$ is super out of the bubble, which is a contradiction.

    Consider the following frame $F$:
    \begin{center}
        \tikzset{every picture/.style={line width=0.75pt}} %set default line width to 0.75pt        
        \begin{tikzpicture}[x=0.75pt,y=0.75pt,yscale=-.8,xscale=.8]
        %uncomment if require: \path (0,300); %set diagram left start at 0, and has height of 300
        
        %Straight Lines [id:da05928991953527163] 
        \draw    (185.5,185) -- (265.66,219.21) ;
        \draw [shift={(267.5,220)}, rotate = 203.11] [color={rgb, 255:red, 0; green, 0; blue, 0 }  ][line width=0.75]    (10.93,-3.29) .. controls (6.95,-1.4) and (3.31,-0.3) .. (0,0) .. controls (3.31,0.3) and (6.95,1.4) .. (10.93,3.29)   ;
        %Straight Lines [id:da10004678245610554] 
        \draw    (186.5,169) -- (260.62,141.69) ;
        \draw [shift={(262.5,141)}, rotate = 159.78] [color={rgb, 255:red, 0; green, 0; blue, 0 }  ][line width=0.75]    (10.93,-3.29) .. controls (6.95,-1.4) and (3.31,-0.3) .. (0,0) .. controls (3.31,0.3) and (6.95,1.4) .. (10.93,3.29)   ;
        %Straight Lines [id:da7434380713720259] 
        \draw    (293.19,126.93) -- (333.81,101.07) ;
        \draw [shift={(335.5,100)}, rotate = 147.53] [color={rgb, 255:red, 0; green, 0; blue, 0 }  ][line width=0.75]    (10.93,-3.29) .. controls (6.95,-1.4) and (3.31,-0.3) .. (0,0) .. controls (3.31,0.3) and (6.95,1.4) .. (10.93,3.29)   ;
        \draw [shift={(291.5,128)}, rotate = 327.53] [color={rgb, 255:red, 0; green, 0; blue, 0 }  ][line width=0.75]    (10.93,-3.29) .. controls (6.95,-1.4) and (3.31,-0.3) .. (0,0) .. controls (3.31,0.3) and (6.95,1.4) .. (10.93,3.29)   ;
        %Straight Lines [id:da10677767127325755] 
        \draw    (295.19,219.93) -- (335.81,194.07) ;
        \draw [shift={(337.5,193)}, rotate = 147.53] [color={rgb, 255:red, 0; green, 0; blue, 0 }  ][line width=0.75]    (10.93,-3.29) .. controls (6.95,-1.4) and (3.31,-0.3) .. (0,0) .. controls (3.31,0.3) and (6.95,1.4) .. (10.93,3.29)   ;
        \draw [shift={(293.5,221)}, rotate = 327.53] [color={rgb, 255:red, 0; green, 0; blue, 0 }  ][line width=0.75]    (10.93,-3.29) .. controls (6.95,-1.4) and (3.31,-0.3) .. (0,0) .. controls (3.31,0.3) and (6.95,1.4) .. (10.93,3.29)   ;
        %Straight Lines [id:da7392138944965345] 
        \draw    (295.31,145.84) -- (332.69,163.16) ;
        \draw [shift={(334.5,164)}, rotate = 204.86] [color={rgb, 255:red, 0; green, 0; blue, 0 }  ][line width=0.75]    (10.93,-3.29) .. controls (6.95,-1.4) and (3.31,-0.3) .. (0,0) .. controls (3.31,0.3) and (6.95,1.4) .. (10.93,3.29)   ;
        \draw [shift={(293.5,145)}, rotate = 24.86] [color={rgb, 255:red, 0; green, 0; blue, 0 }  ][line width=0.75]    (10.93,-3.29) .. controls (6.95,-1.4) and (3.31,-0.3) .. (0,0) .. controls (3.31,0.3) and (6.95,1.4) .. (10.93,3.29)   ;
        %Straight Lines [id:da8351408220439647] 
        \draw    (297.31,233.84) -- (334.69,251.16) ;
        \draw [shift={(336.5,252)}, rotate = 204.86] [color={rgb, 255:red, 0; green, 0; blue, 0 }  ][line width=0.75]    (10.93,-3.29) .. controls (6.95,-1.4) and (3.31,-0.3) .. (0,0) .. controls (3.31,0.3) and (6.95,1.4) .. (10.93,3.29)   ;
        \draw [shift={(295.5,233)}, rotate = 24.86] [color={rgb, 255:red, 0; green, 0; blue, 0 }  ][line width=0.75]    (10.93,-3.29) .. controls (6.95,-1.4) and (3.31,-0.3) .. (0,0) .. controls (3.31,0.3) and (6.95,1.4) .. (10.93,3.29)   ;
        %Straight Lines [id:da12403182801219348] 
        \draw    (350.5,148) -- (350.5,108) ;
        \draw [shift={(350.5,106)}, rotate = 90] [color={rgb, 255:red, 0; green, 0; blue, 0 }  ][line width=0.75]    (10.93,-3.29) .. controls (6.95,-1.4) and (3.31,-0.3) .. (0,0) .. controls (3.31,0.3) and (6.95,1.4) .. (10.93,3.29)   ;
        \draw [shift={(350.5,150)}, rotate = 270] [color={rgb, 255:red, 0; green, 0; blue, 0 }  ][line width=0.75]    (10.93,-3.29) .. controls (6.95,-1.4) and (3.31,-0.3) .. (0,0) .. controls (3.31,0.3) and (6.95,1.4) .. (10.93,3.29)   ;
        %Straight Lines [id:da6892995026591608] 
        \draw    (350.5,243) -- (350.5,203) ;
        \draw [shift={(350.5,201)}, rotate = 90] [color={rgb, 255:red, 0; green, 0; blue, 0 }  ][line width=0.75]    (10.93,-3.29) .. controls (6.95,-1.4) and (3.31,-0.3) .. (0,0) .. controls (3.31,0.3) and (6.95,1.4) .. (10.93,3.29)   ;
        \draw [shift={(350.5,245)}, rotate = 270] [color={rgb, 255:red, 0; green, 0; blue, 0 }  ][line width=0.75]    (10.93,-3.29) .. controls (6.95,-1.4) and (3.31,-0.3) .. (0,0) .. controls (3.31,0.3) and (6.95,1.4) .. (10.93,3.29)   ;
        
        % Text Node
        \draw (166.5,167.4) node [anchor=north west][inner sep=0.75pt]    {$w$};
        % Text Node
        \draw (269.5,129.4) node [anchor=north west][inner sep=0.75pt]    {$w_{1}$};
        % Text Node
        \draw (271.5,214.4) node [anchor=north west][inner sep=0.75pt]    {$w_{2}$};
        % Text Node
        \draw (341.5,86.4) node [anchor=north west][inner sep=0.75pt]    {$w'_{1}$};
        % Text Node
        \draw (340.5,151.4) node [anchor=north west][inner sep=0.75pt]    {$w''_{1}$};
        % Text Node
        \draw (340.5,245.4) node [anchor=north west][inner sep=0.75pt]    {$w''_{2}$};
        % Text Node
        \draw (341.5,181.4) node [anchor=north west][inner sep=0.75pt]    {$w'_{2}$};
        % Text Node
        \draw (164.5,194.4) node [anchor=north west][inner sep=0.75pt]    {$A$};
        % Text Node
        \draw (368.5,88.4) node [anchor=north west][inner sep=0.75pt]    {$A$};
        % Text Node
        \draw (369.5,183.4) node [anchor=north west][inner sep=0.75pt]    {$A$};
        % Text Node
        \draw (271.5,154.4) node [anchor=north west][inner sep=0.75pt]    {$B$};
        % Text Node
        \draw (371.5,252.4) node [anchor=north west][inner sep=0.75pt]    {$B$};
        % Text Node
        \draw (271.5,238.4) node [anchor=north west][inner sep=0.75pt]    {$C$};
        % Text Node
        \draw (368.5,152.4) node [anchor=north west][inner sep=0.75pt]    {$C$};

        \end{tikzpicture}
    \end{center}
    As the reader can observe, each world has the lattice $A = \{1,0,a,-a\}$, $B = \{1,0,b,-b\}$ or $C = \{1,0,c,-c\}$ written next to them. These are the lattices of each world. Now, observe that this frame is super out of the bubble. Consider then the \textbf{similar} frame $F'$:
    \begin{center}
        \tikzset{every picture/.style={line width=0.75pt}} %set default line width to 0.75pt        
        \begin{tikzpicture}[x=0.75pt,y=0.75pt,yscale=-.8,xscale=.8]
        %uncomment if require: \path (0,300); %set diagram left start at 0, and has height of 300
        
        %Straight Lines [id:da05928991953527163] 
        \draw    (185.5,185) -- (265.66,219.21) ;
        \draw [shift={(267.5,220)}, rotate = 203.11] [color={rgb, 255:red, 0; green, 0; blue, 0 }  ][line width=0.75]    (10.93,-3.29) .. controls (6.95,-1.4) and (3.31,-0.3) .. (0,0) .. controls (3.31,0.3) and (6.95,1.4) .. (10.93,3.29)   ;
        %Straight Lines [id:da10004678245610554] 
        \draw    (186.5,169) -- (260.62,141.69) ;
        \draw [shift={(262.5,141)}, rotate = 159.78] [color={rgb, 255:red, 0; green, 0; blue, 0 }  ][line width=0.75]    (10.93,-3.29) .. controls (6.95,-1.4) and (3.31,-0.3) .. (0,0) .. controls (3.31,0.3) and (6.95,1.4) .. (10.93,3.29)   ;
        %Straight Lines [id:da7434380713720259] 
        \draw    (293.19,126.93) -- (333.81,101.07) ;
        \draw [shift={(335.5,100)}, rotate = 147.53] [color={rgb, 255:red, 0; green, 0; blue, 0 }  ][line width=0.75]    (10.93,-3.29) .. controls (6.95,-1.4) and (3.31,-0.3) .. (0,0) .. controls (3.31,0.3) and (6.95,1.4) .. (10.93,3.29)   ;
        \draw [shift={(291.5,128)}, rotate = 327.53] [color={rgb, 255:red, 0; green, 0; blue, 0 }  ][line width=0.75]    (10.93,-3.29) .. controls (6.95,-1.4) and (3.31,-0.3) .. (0,0) .. controls (3.31,0.3) and (6.95,1.4) .. (10.93,3.29)   ;
        %Straight Lines [id:da10677767127325755] 
        \draw    (295.19,219.93) -- (335.81,194.07) ;
        \draw [shift={(337.5,193)}, rotate = 147.53] [color={rgb, 255:red, 0; green, 0; blue, 0 }  ][line width=0.75]    (10.93,-3.29) .. controls (6.95,-1.4) and (3.31,-0.3) .. (0,0) .. controls (3.31,0.3) and (6.95,1.4) .. (10.93,3.29)   ;
        \draw [shift={(293.5,221)}, rotate = 327.53] [color={rgb, 255:red, 0; green, 0; blue, 0 }  ][line width=0.75]    (10.93,-3.29) .. controls (6.95,-1.4) and (3.31,-0.3) .. (0,0) .. controls (3.31,0.3) and (6.95,1.4) .. (10.93,3.29)   ;
        %Straight Lines [id:da7392138944965345] 
        \draw    (295.31,145.84) -- (332.69,163.16) ;
        \draw [shift={(334.5,164)}, rotate = 204.86] [color={rgb, 255:red, 0; green, 0; blue, 0 }  ][line width=0.75]    (10.93,-3.29) .. controls (6.95,-1.4) and (3.31,-0.3) .. (0,0) .. controls (3.31,0.3) and (6.95,1.4) .. (10.93,3.29)   ;
        \draw [shift={(293.5,145)}, rotate = 24.86] [color={rgb, 255:red, 0; green, 0; blue, 0 }  ][line width=0.75]    (10.93,-3.29) .. controls (6.95,-1.4) and (3.31,-0.3) .. (0,0) .. controls (3.31,0.3) and (6.95,1.4) .. (10.93,3.29)   ;
        %Straight Lines [id:da8351408220439647] 
        \draw    (297.31,233.84) -- (334.69,251.16) ;
        \draw [shift={(336.5,252)}, rotate = 204.86] [color={rgb, 255:red, 0; green, 0; blue, 0 }  ][line width=0.75]    (10.93,-3.29) .. controls (6.95,-1.4) and (3.31,-0.3) .. (0,0) .. controls (3.31,0.3) and (6.95,1.4) .. (10.93,3.29)   ;
        \draw [shift={(295.5,233)}, rotate = 24.86] [color={rgb, 255:red, 0; green, 0; blue, 0 }  ][line width=0.75]    (10.93,-3.29) .. controls (6.95,-1.4) and (3.31,-0.3) .. (0,0) .. controls (3.31,0.3) and (6.95,1.4) .. (10.93,3.29)   ;
        %Straight Lines [id:da12403182801219348] 
        \draw    (350.5,148) -- (350.5,108) ;
        \draw [shift={(350.5,106)}, rotate = 90] [color={rgb, 255:red, 0; green, 0; blue, 0 }  ][line width=0.75]    (10.93,-3.29) .. controls (6.95,-1.4) and (3.31,-0.3) .. (0,0) .. controls (3.31,0.3) and (6.95,1.4) .. (10.93,3.29)   ;
        \draw [shift={(350.5,150)}, rotate = 270] [color={rgb, 255:red, 0; green, 0; blue, 0 }  ][line width=0.75]    (10.93,-3.29) .. controls (6.95,-1.4) and (3.31,-0.3) .. (0,0) .. controls (3.31,0.3) and (6.95,1.4) .. (10.93,3.29)   ;
        %Straight Lines [id:da6892995026591608] 
        \draw    (350.5,243) -- (350.5,203) ;
        \draw [shift={(350.5,201)}, rotate = 90] [color={rgb, 255:red, 0; green, 0; blue, 0 }  ][line width=0.75]    (10.93,-3.29) .. controls (6.95,-1.4) and (3.31,-0.3) .. (0,0) .. controls (3.31,0.3) and (6.95,1.4) .. (10.93,3.29)   ;
        \draw [shift={(350.5,245)}, rotate = 270] [color={rgb, 255:red, 0; green, 0; blue, 0 }  ][line width=0.75]    (10.93,-3.29) .. controls (6.95,-1.4) and (3.31,-0.3) .. (0,0) .. controls (3.31,0.3) and (6.95,1.4) .. (10.93,3.29)   ;
        
        % Text Node
        \draw (166.5,167.4) node [anchor=north west][inner sep=0.75pt]    {$u$};
        % Text Node
        \draw (269.5,129.4) node [anchor=north west][inner sep=0.75pt]    {$u_{1}$};
        % Text Node
        \draw (271.5,214.4) node [anchor=north west][inner sep=0.75pt]    {$u_{2}$};
        % Text Node
        \draw (341.5,86.4) node [anchor=north west][inner sep=0.75pt]    {$u'_{1}$};
        % Text Node
        \draw (340.5,151.4) node [anchor=north west][inner sep=0.75pt]    {$u''_{1}$};
        % Text Node
        \draw (340.5,245.4) node [anchor=north west][inner sep=0.75pt]    {$u''_{2}$};
        % Text Node
        \draw (341.5,181.4) node [anchor=north west][inner sep=0.75pt]    {$u'_{2}$};
        % Text Node
        \draw (164.5,194.4) node [anchor=north west][inner sep=0.75pt]    {$A$};
        % Text Node
        \draw (367.5,88.4) node [anchor=north west][inner sep=0.75pt]    {$A$};
        % Text Node
        \draw (369.5,183.4) node [anchor=north west][inner sep=0.75pt]    {$A$};
        % Text Node
        \draw (371.5,154.4) node [anchor=north west][inner sep=0.75pt]    {$B$};
        % Text Node
        \draw (371.5,252.4) node [anchor=north west][inner sep=0.75pt]    {$B$};
        % Text Node
        \draw (271.5,238.4) node [anchor=north west][inner sep=0.75pt]    {$C$};
        % Text Node
        \draw (271.5,153.4) node [anchor=north west][inner sep=0.75pt]    {$C$};

\end{tikzpicture}
    \end{center}

    Observe that the frame $F'$ is not super out of the bubble since $u$ accesses only worlds with lattice $C$.

    Consider the sets of worlds $D_1 = \{w_1, w_1', w_1''\}$, $D_2 = \{w_2, w_2', w_2''\}$, $E_1 = \{u_1, u_1', u_1''\}$ and $E_2 = \{u_2, u_2', u_2''\}$. Each of these frames is exactly the same, i.e. a frame composed of three worlds, each with a different Boolean algebra ($A$, $B$, and $C$), and every world accesses each other. Therefore, we obtain
    $$D_1 \vDash \psi \miff D_2 \vDash \psi \miff E_1 \vDash \psi \miff E_2 \vDash \psi$$
    We prove $F \vDash \varphi$ if, and only if, $F' \vDash \varphi$. Suppose that 
    $$F \vDash \varphi \text{ and } F' \nvDash \varphi$$
    Consequently, there is a world $d$ in a model $M$ with frame $F'$ and valuation $v$ such that $d \nvDash \varphi$. Notably, $d$ cannot be any of the worlds in $E_1$ and $E_2$ since $F \vDash \varphi$ implies $E_1 \vDash \varphi$ and $E_2 \vDash \varphi$. Therefore, $d$ is the world $u$.

    Consider the transformation $eq$ from Boolean values and worlds to Boolean values:
    \begin{enumerate}
        \item $eq(1, x) = 1$
        \item $eq(0, x) = 0$
        \item Let $k \in \{a, b, c\}$, 
        $$eq(k, x) = \begin{cases}
            a, I(x) = A\\
            b, I(x) = B\\
            a, I(x) = C
        \end{cases}$$
        \item Let $k \in \{-a, -b, -c\}$, 
        $$eq(k, x) = \begin{cases}
            -a, I(x) = A\\
            -b, I(x) = B\\
            -a, I(x) = C
        \end{cases}$$
    \end{enumerate}

    Now consider the model $N$ with frame $F$ and valuation $h$ such that 
        $$h_{w_i^*}(p) = eq(v_{u_i^*}(p))$$
    being $p$ any variable, $i \in \{\emptyset, 1, 2\}$ and $*$ the symbol $'$ zero, one or two times.

    From this it follows that 
    \begin{enumerate}
        \item $h_{w_2}(\psi) = v_{u_2}(\psi)$ for all $\psi$.
        \item $h_{w_1}(\psi) = eq(v_{u_1}(\psi), w_1)$ for all $\psi$.
    \end{enumerate}

    From this we obtain the following. 
    \begin{enumerate}
        \item $(h_{w_2}(\psi))^A = (v_{u_2}(\psi))^A$ for all $\psi$.
        \item $(h_{w_1}(\psi))^A = (v_{u_1}(\psi))^A$ for all $\psi$.
    \end{enumerate}
    And thus we obtain that 
    $$h_w(\varphi) = h_u(\varphi)$$
    and this contradicts $F \vDash \varphi$ and $u \nvDash \varphi$.

    A similar procedure leads the supposition $F \nvDash \varphi$ and $F' \vDash \varphi$ to a contradiction. Hence, we have $F \vDash \varphi$ if and only if $F' \vDash \varphi$.
\end{proof}

The issue in the proof of Theorem~\ref{soobNotCharacterizable} is that worlds $u_1$ and $u_2$ are not connected. So, no difference can be extracted from $u_1$ and $u_2$ having different lattices from the point of view of $u$. We may, however, attempt to remedy this once we also have the Euclidean axiom. The axiom guarantees in the proof of Theorem~\ref{soobNotCharacterizable} world $u_1$ accesses $u_2$. However, this accessibility relation will not be sufficient to prevent $u_1$ and $u_2$ from having the same lattice. 

\begin{theorem}
    No formula characterizes super out of the bubble (SooB) and Euclidean.
\end{theorem}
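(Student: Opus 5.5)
We follow the strategy of Theorem~\ref{soobNotCharacterizable}: we exhibit two Euclidean frames $F$ and $F'$, with $F$ super out of the bubble (SooB) and $F'$ not, such that $F \vDash \varphi$ iff $F' \vDash \varphi$ for every formula $\varphi$. If some $\theta$ characterized ``SooB and Euclidean'', then $F \vDash \theta$ would give $F' \vDash \theta$, so $F'$ would be SooB, a contradiction.

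\textbf{Choice of frames.} Take $F$ with worlds $w$ (lattice $A$), $w_1$ (lattice $B$) and $w_2$ (lattice $C$). Let $w R w_1$, $w R w_2$, and let $\{w_1, w_2\}$ be a cluster: $w_1$ and $w_2$ access each other and themselves. This frame is Euclidean. Indeed, the successors of $w$ are exactly $\{w_1,w_2\}$, and that set is totally connected. It is also SooB, since $w$ sees both $B$ and $C$, while $w_1$ sees $w_2$ (lattice $C$) and $w_2$ sees $w_1$ (lattice $B$). Let $F'$ be the same frame with lattices $A$ at $u$, $C$ at $u_1$ and $C$ at $u_2$. Then $F'$ is Euclidean, but $u$ sees only lattice $C$, so it is not SooB; its cluster is also not out of the bubble. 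If isolating the cluster turns out to be a problem, we instead give each cluster world its own copy of the three-lattice clique from Theorem~\ref{soobNotCharacterizable}, keeping Euclideanness, since a union of clusters is Euclidean.

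\textbf{Transfer of valuations.} The aim is to show that validity at $u$ and at $w$ coincide. As in the earlier proof, we use a value map $eq$ sending $1,0$ to themselves and each non-extremal value to the corresponding value of the target world's lattice. The key step is a lemma: for every formula $\psi$, $(h_{w_i}(\psi))^A = (v_{u_i}(\psi))^A$. The point is that from an $A$-world, every non-extremal value of $B$ or $C$ down-interprets to $0$ or $-a$ in the same pattern. Concretely, $b^A = c^A = 0$ and $(-b)^A = (-c)^A = 0$ under the down interpretation, because the only element of $A$ below $b$ is $0$. So the root cannot distinguish $B$ from $C$ among its successors, and $h_w(\varphi) = v_u(\varphi)$ follows by induction on $\varphi$, including the $\Box$ clause.

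\textbf{Main obstacle.} Unlike the earlier proof, $u_1$ and $u_2$ now see each other, so the values inside the cluster are computed differently in $F$ (mixed lattices) and in $F'$ (same lattice). Down-interpretation inside the $\{u_1,u_2\}$ cluster preserves $z$-values, whereas in the $\{w_1,w_2\}$ cluster they collapse. So $v_{u_i}(\psi)$ and $h_{w_i}(\psi)$ need not correspond under $eq$ for modal $\psi$.

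To handle this, we first try to restrict the transfer so that only the root's view matters: $A$-down-interpretations of cluster values depend only on whether a value is $1$, $0$, or non-extremal. We then check whether a weaker invariant (``$\circ\psi$ and the $\{0,1\}$-status agree'') survives induction.

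If it does not, we fall back on the alternative described above: attach to each cluster world a full $A$,$B$,$C$ clique, making every cluster world see all three lattices, so that the mixed versus same-lattice difference is washed out exactly as with the $D_i$ and $E_i$ in Theorem~\ref{soobNotCharacterizable}. In that version, only the root distinguishes $F$ from $F'$, and the root's view is handled by the lemma above.

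The converse direction, from $F' \vDash \varphi$ to $F \vDash \varphi$, is symmetric, using the inverse value map.
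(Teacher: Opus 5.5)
Your proposal has a genuine gap, and it cannot be closed.

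\textbf{The primary pair is not a witness.} Your $F$ is not super out of the bubble. The world $w_1$ (lattice $B$) accesses a $C$-world but no $A$-world, so clause (ii) fails at $w_1$, and symmetrically at $w_2$. In any Euclidean frame, all successors of a world share one universal cluster $K$ as their successor set, and every point of $K$ sees exactly $K$. So SooB forces $K$ to contain all three lattices.

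\textbf{The fallback does not rescue the construction.} It cannot be a ``union of clusters'': $wRw_1$ and $wRw_2$ force $w_1$, $w_2$ and everything they see into one cluster. The only relevant version is therefore a root seeing two points of a single cluster containing $A$-, $B$- and $C$-worlds. There your transfer lemma is false, and the computation behind it is already wrong. Write $B_i=\{0,e_i,-e_i,1\}$ with $e_i$ an atom of $B_8^\circ$. For $j\neq i$ one has $(e_j)^{B_i}=0$ but $(-e_j)^{B_i}=e_i$. So one of $b,-b$ down-interprets to the atom of $A$, not to $0$.

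\textbf{No choice of frames can work, because the statement fails in the paper's semantics.}

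\emph{Validity on SooB Euclidean frames.} Let $I(w)=B_i$, and suppose all $y\in R(w)$ share a successor set $K$, as in any Euclidean frame. Suppose $R(w)$ contains worlds $y_j,y_k$ of the two other lattices $B_j,B_k$, and $v_w(\Box\Box q)\neq 0$. Then $(v_{y_j}(\Box q))^{B_i}\neq 0$ forces $v_{y_j}(\Box q)\geq -e_j$. Hence $(v_z(q))^{B_j}\geq -e_j$ for all $z\in K$. For $I(z)\neq B_j$ this means $v_z(q)=1$, since such values land in $\{0,e_j,1\}$. The same holds with $k$ in place of $j$. So $q$ is $1$ on $K$, and $v_w(\Box\Box q)=1$. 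Thus $\circ\Box\Box q$ is valid on every SooB Euclidean frame.

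\emph{Failure on non-SooB Euclidean frames.} Suppose $R(w)\neq\emptyset$ misses one of the other two lattices. Pick $y_0\in R(w)$ of lattice $B_m$, with $m\neq i$ if possible. Put $q=-e_m$ at $y_0$ and $q=1$ elsewhere. Every $y\in R(w)$ then gets $v_y(\Box q)=-e_m$ or the atom of its own lattice. Hence $v_w(\Box\Box q)$ is $e_i$, or $-e_i$ when $m=i$, and $w\nvDash\circ\Box\Box q$.

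\emph{Consequences.} In your fallback pair this gives $F\vDash\circ\Box\Box q$ but $F'\nvDash\circ\Box\Box q$. Take $q=-e_C$ at $u_1$; then $v_u(\Box\Box q)=e_A$, where $e_A,e_C$ are the atoms of $A,C$. Combined with Theorem~\ref{Euc_yes}, the formula $(\Diamond\circ p\so\Box\Diamond\circ p)\wedge\circ\Box\Box q$ characterizes exactly the SooB Euclidean frames. So the indistinguishability lemma you need is false, and so is the statement. The paper's canonical-model argument does not establish it either: it only produces a non-SooB \emph{model} of $\varphi$, not a non-SooB \emph{frame} validating $\varphi$.
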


\begin{proof}
    Suppose that there is $\varphi$ such that $F \vDash \varphi$ if and only if $F$ is SooB and Euclidean. Because there are SooB and Euclidean frames, $\{\varphi, 5^\circ\}$ is locally consistent. Then Theorem~\ref{valuation_canonical_model_2lattices} gives us a canonical model $M$ with respect to $\{\varphi, 5^\circ\}$ and a model $M'$ restricting $M$ to worlds with lattices $A$ and $B$. Consequently, $M' \vDash \varphi$ and $M \vDash 5^\circ$ while $M'$ is not SooB.\footnote{This same strategy could have been used to prove \cref{soobNotCharacterizable} to obtain a shorter proof. Nonetheless, we consider the more extensive proof found in \cref{soobNotCharacterizable} to be instructive, as it is independent of the canonical model construction.}
\end{proof}

Although our new framework allows for the derivation of new frame characterizations, these remain constrained by the expressive power of the underlying modal language. The primary difficulty is the limited expressive power of the specified necessity operator: it does not differentiate between the lattices of accessible worlds (e.g., from the perspective of a world with lattice $A$, worlds with lattices $B$ and $C$ appear indistinguishable). Moreover, the necessity operator alone cannot identify when two worlds share the same lattice (e.g., a world with lattice $A$ cannot determine that another world also has lattice $A$). In light of these limitations, we now briefly turn to further language extensions that enable the characterization of other, more expressive, properties.

\begin{definition}
    We say that a frame $F$ is \textbf{transitive through equality} when for every $w$, $u$ and $u'$ such that $I(w) = I(u) = I(u')$ and $w R u R u'$ implies $w R u'$.
\end{definition}

\begin{definition} The operator $\bBox$ is such that, for every formula $\varphi$ and world $w$ in every model $M$,
    $$v_w(\bBox \varphi) = \bigwedge \{v_{w'}(\varphi) \mid I(w') = I(w) \text{ and } w R w'\}$$
    In other words, $v_w(\bBox \varphi)$ is obtained by only considering the accessible worlds with the same lattice as $w$.
\end{definition}

\begin{theorem}
    $F$ is transitive through equality if and only if 
    $$ F \vDash \Box \varphi \so \bBox \bBox \varphi$$
\end{theorem}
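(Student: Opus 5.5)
We prove the two directions separately. The right-to-left direction uses a two-valued countermodel. The left-to-right direction is an order-theoretic comparison between $v_w(\Box\varphi)$ and $v_w(\bBox\bBox\varphi)$ inside the lattice of $w$. Throughout we use one elementary fact about Boolean algebras: if $x \leq y$ then $-x + y = 1$. Hence it suffices to show $v_w(\Box\varphi) \leq v_w(\bBox\bBox\varphi)$ at every world $w$. This gives $v_w(\Box\varphi \so \bBox\bBox\varphi) = 1 \in U$, which is stronger than mere designation.

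\emph{(Transitive through equality $\Rightarrow$ validity.)} Fix a model on $F$ and a world $w$ with $I(w) = A$. By definition,
$$v_w(\bBox\bBox\varphi) = \bigwedge\{v_{u'}(\varphi) \mid wRu,\ uRu',\ I(u) = I(u') = A\}.$$
If this index set is empty the value is $1$ and there is nothing to prove. Otherwise take any $u, u'$ in the index set. Since $I(w) = I(u) = I(u')$ and $wRuRu'$, transitivity through equality gives $wRu'$. Because $u'$ shares the lattice $A$, the down-interpretation satisfies $(v_{u'}(\varphi))^{A} = v_{u'}(\varphi)$. Therefore $v_{u'}(\varphi)$ is one of the meetands in $v_w(\Box\varphi)$, and so $v_w(\Box\varphi) \leq v_{u'}(\varphi)$. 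Taking the meet over all such $u'$ yields $v_w(\Box\varphi) \leq v_w(\bBox\bBox\varphi)$, as required.

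The one point needing care is the lattice bookkeeping. The inner $\bBox$ at $u$ is computed in $I(u) = A$ without relativization. The outer one at $w$ ranges only over $A$-worlds, so all meets take place in $A$. The only comparison with $\Box$ is through the identity $x^{A} = x$ for $x \in A$.

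\emph{(Validity $\Rightarrow$ transitive through equality.)} We argue contrapositively. Suppose $wRu$, $uRu'$, $I(w) = I(u) = I(u') = A$, and not $wRu'$. Take a variable $p$ and define $v_{u'}(p) = 0$ and $v_x(p) = 1$ for every other world $x$. Both $0$ and $1$ lie in every sublattice, so this is a legitimate valuation.

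Every successor of $w$ differs from $u'$, so each contributes $1^{I(w)} = 1$, and hence $v_w(\Box p) = 1$. On the other hand, $u'$ is a same-lattice successor of $u$, so $v_u(\bBox p) \leq v_{u'}(p) = 0$. Then $u$ is a same-lattice successor of $w$, so $v_w(\bBox\bBox p) \leq v_u(\bBox p) = 0$. Consequently $v_w(\Box p \so \bBox\bBox p) = -1 + 0 = 0 \notin U$, and $F \nvDash \Box\varphi \so \bBox\bBox\varphi$.

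Neither direction seems to hide a real obstacle. The subtle point is the first direction: there we must use that the $\bBox$-meets never leave $A$, so that the standard K-style argument for axiom $4$ goes through even though $\Box$ involves down-interpretations from other lattices.
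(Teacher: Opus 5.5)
Your proof is correct and follows essentially the same route as the paper. One direction uses the same two-valued countermodel ($0$ at $u'$, $1$ elsewhere). For the other, the key point is the same: transitivity through equality makes every same-lattice $\bBox\bBox$-successor of $w$ a $\Box$-successor in $I(w)$, so no down-interpretation intervenes. The only difference is presentation: you prove the direct inequality $v_w(\Box\varphi) \leq v_w(\bBox\bBox\varphi)$, so the implication takes value $1$, whereas the paper argues by contradiction from non-designated witnesses $u, u'$.
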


\begin{proof}
    Assume that there exists a frame $F$ that is not transitive through equality and that $F \vDash \Box x \so \bBox \bBox x$. Consequently, there are $w R u R u'$ with $I(w) = I(u) = I(u')$ such that $w \not R u'$. We define the valuation $v$:
    $$v_e(p) = \begin{cases}
        0, \text{ if } e = u' \\
        1, \text{ otherwise}
    \end{cases}$$
    Since $I(u) = I(u')$ and $u R u'$, we obtain $v_u(\bBox x) = 0$. Now, because $I(w) = I(u)$ and $w R u$, we obtain $v_w(\bBox \bBox x) = 0$. Furthermore, $w \not R w'$ and all worlds other than $w'$ attribute the value $1$ to $x$, so we have $v_w(\Box x) = 1$. Then we obtain the absurd $w \nvDash \Box x \so \bBox \bBox x$.

    Suppose that $F$ is transitive through equality and there is $w$ in $M$ with frame $F$ such that $w \vDash \Box \varphi$ and $w \nvDash \bBox \bBox \varphi$. Assume without loss of generality that $I(w) = A$. Since $w \nvDash \bBox \bBox \varphi$, there is $u$ such that $I(w) = I(u)$ and $v_u(\bBox \varphi)$ is $0$ or $-a$. Consequently, there is $u'$ such that $u R u'$, $I(u) = I(u') = A$ and $v_{u'}(\varphi)$ is $0$ or $-a$.
    Since $F$ is transitive through equality, we obtain $w R w'$ and $v_w(\Box \varphi) = 0$. This is absurd since $w \vDash \Box \varphi$.
\end{proof}

\begin{definition}
    We say that a frame $F$ is \textbf{transitive through difference} when for every $w$, $u$ and $u'$ such that
    $I(w) \neq I(u) = I(u')$, $w R u R u'$ implies $w R u'$.
\end{definition}

\begin{definition} The operator $\mBox$ is such that, for every formula $\varphi$ and world $w$ in every model $M$,
    $$v_w( \mBox\varphi) = \bigwedge \{(v_{w'}(\varphi))^{I(w)} \mid I(w') \neq I(w) \text{ and } w R w'\}$$
    In other words, $v_w( \mBox\varphi)$ is obtained by considering only the accessible worlds that do not have the same lattice as $w$.
\end{definition} 

\begin{theorem}
    $F$ is transitive through difference if and only if 
    $$F \vDash \Box \varphi \so \mBox \bBox (\circ \varphi \land \varphi)$$
\end{theorem}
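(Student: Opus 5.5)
The plan is to prove the two directions separately, following the pattern of the transitivity-through-equality theorem above. Both directions rest on one computation with the down-interpretation in $B_8^\circ$. Let $A=\{1,0,a,-a\}$ and let $B=\{1,0,b,-b\}$ be a different four-valued subalgebra. Then the only element of $A$ below $b$, and likewise below $-b$, is $0$. Hence $b^A=(-b)^A=0^A=0$ and $1^A=1$.

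The consequence I will use repeatedly is this. If $I(w)=A$ and $wRu'$ with $I(u')\neq A$, then $(v_{u'}(\varphi))^A\in U$ exactly when $v_{u'}(\varphi)=1$. Since $\circ\varphi\land\varphi$ only takes the values $0$ and $1$, so does $\bBox(\circ\varphi\land\varphi)$ in every world, the empty meet being $1$. In particular $\bBox(\circ\varphi\land\varphi)$ is $1$ at $u$ iff every $u'$ with $uRu'$ and $I(u')=I(u)$ has $v_{u'}(\varphi)=1$.

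\emph{Soundness (transitive through difference implies validity).} Fix a model on $F$ and a world $w$ with $I(w)=A$ and $w\vDash\Box\varphi$. By the computation above, every $u'$ with $wRu'$ and $I(u')\neq A$ satisfies $v_{u'}(\varphi)=1$.

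Now take any $u$ with $wRu$ and $I(u)\neq A$, and any $u'$ with $uRu'$ and $I(u')=I(u)$. Transitivity through difference gives $wRu'$, and $I(u')\neq A$. So $v_{u'}(\varphi)=1$, hence $v_{u'}(\circ\varphi\land\varphi)=1$. Therefore $v_u(\bBox(\circ\varphi\land\varphi))=1$, whose $A$-interpretation is $1$.

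Taking the meet over all such $u$ gives $v_w(\mBox\bBox(\circ\varphi\land\varphi))=1\in U$. The only point to check carefully is the vacuous case, where no such $u$ or $u'$ exists; there the empty meet is $1$.

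\emph{Converse (validity implies transitive through difference).} Suppose $F$ is not transitive through difference. Then there are $w R u R u'$ with $I(w)\neq I(u)=I(u')$ and not $wRu'$. Define $v_e(p)=0$ if $e=u'$ and $v_e(p)=1$ otherwise.

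Note that $u'\neq w$, since their lattices differ. All successors of $w$ give $p$ the value $1$, so $v_w(\Box p)=1$. On the other hand, $uRu'$ with $I(u)=I(u')$ and $v_{u'}(\circ p\land p)=0$, so $v_u(\bBox(\circ p\land p))=0$. Since $wRu$ and $I(u)\neq I(w)$, this forces $v_w(\mBox\bBox(\circ p\land p))=0\notin U$. Hence $w\nvDash\Box p\to\mBox\bBox(\circ p\land p)$, contradicting validity on $F$.

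The main obstacle is only the bookkeeping of down-interpretations in the first direction. One must be sure that designation of $\Box\varphi$ at $w$ really forces the top value $1$ at cross-lattice successors, not merely a designated value. This is exactly why $\circ\varphi$ appears inside $\bBox$, so I will verify the identities $b^A=(-b)^A=0$ explicitly from the structure of $B_8^\circ$ before running the argument.
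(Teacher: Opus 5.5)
Your proof follows the paper's almost line by line: your converse is the paper's counter-model, and your first direction is the paper's contradiction argument run directly. However, the identity it rests on, $b^A=(-b)^A=0$, is false in $B_8^\circ$, and the first direction fails with it. Write the atoms of $B_8^\circ$ as $e_1,e_2,e_3$, so that the three subalgebras are $A_i=\{0,e_i,-e_i,1\}$. For $i\neq j$ we have $e_i\le -e_j$, hence $(-e_j)^{A_i}=e_i\neq 0$; only the atom $e_j$ goes to $0$. Every ultrafilter is generated by an atom, say $U=\{x : e_1\le x\}$. Then the designated middle value is the atom $e_1$ in $A_1$ but the coatom $e_1+e_3$ in $A_2$, and $(e_1+e_3)^{A_1}=e_1\in U$. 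So at a world of $A_1$, designation of $\Box\varphi$ does \emph{not} force the value $1$ at cross-lattice successors, which is exactly the point you flagged as the main obstacle. Concretely, let $W=\{w,u\}$, $I(w)=A_1$, $I(u)=A_2$, $R=\{(w,u),(u,u)\}$. This frame is transitive through difference, since the only relevant chain is $wRuRu$. Put $v_u(p)=e_1+e_3$. Then $v_w(\Box p)=e_1\in U$, while $v_u(\circ p\land p)=0$. Hence $v_u(\bBox(\circ p\land p))=0$, $v_w(\mBox\bBox(\circ p\land p))=0$, and $v_w(\Box p\to\mBox\bBox(\circ p\land p))=-e_1\notin U$. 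So ``transitive through difference implies validity'' is false as stated, and no more careful bookkeeping can close your gap. Your other direction is fine, since there only $0$ and $1$ are ever down-interpreted.

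The paper's proof has the same hole. It passes from $v_{u'}(\varphi)\neq 1$ and $I(u')\neq I(w)$ directly to $v_w(\Box\varphi)=0$, and the proof of Theorem~\ref{Euc_no} likewise computes both $b^A$ and $(-b)^A$ as $0$. The weaker fact you actually use is that a cross-lattice value is designated after down-interpretation iff it equals $1$. That fact is true only for $A=A_2,A_3$; for example, the designated middle value $e_1+e_2$ of $A_3$ has $(e_1+e_2)^{A_2}=e_2\notin U$. So your argument is correct at worlds of those two lattices. To get a statement valid at all worlds, strengthen the antecedent to $\Box(\circ\varphi\land\varphi)$. Since $\circ\varphi\land\varphi$ takes only the values $0$ and $1$, which every down-interpretation fixes, designation of $\Box(\circ\varphi\land\varphi)$ at $w$ does force $v_x(\varphi)=1$ at every successor $x$. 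Your transitivity argument then goes through unchanged. Your counter-model, where $\circ p\land p$ is $0$ at $u'$ and $1$ elsewhere, still refutes the strengthened formula on frames that are not transitive through difference.
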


\begin{proof}
    Suppose $F \vDash \Box x \so \mBox \bBox (\circ x \land x)$ and that $F$ is not transitive through difference. Then there are $w R u R u'$ with $I(w) \neq I(u)$ and $I(w) = I(u')$ such that $w \not R u'$. We define the valuation $v$:
    $$v_e(p) = \begin{cases}
        0, \text{ if } e = u' \\
        1, \text{ otherwise}
    \end{cases}$$
    It follows that $v_{u'}(\circ x \land x) = 0$. Since $I(u) = I(u')$ and $u R u'$, we obtain $v_u(\bBox (\circ x \land x)) = 0$ and, from $I(w) \neq I(u)$ and $w R u$, that $v_w(\mBox \bBox (\circ x \land x)) = 0$. Furthermore, $w \not R w'$ and all worlds other than $w'$ attribute the value $1$ to $x$, so we have $v_w(\Box x) = 1$. Thence $w \nvDash \Box x \so \mBox \bBox (\circ x \land x)$, which is absurd.

    Suppose that $F$ is transitive through difference and that there is $w$ in $M$ with frame $F$ such that $w \vDash \Box \varphi$ and $w \nvDash \mBox \bBox (\circ \varphi \land \varphi)$. Notably, $\circ \varphi \land \varphi$ can only result in values $0$ or $1$ and, since $w \nvDash \mBox \bBox (\circ \varphi \land \varphi)$, there is $u$ such that $I(w) \neq I(u)$ and $v_u(\bBox (\circ \varphi \land \varphi)) =0$. Consequently, there is $u'$ such that $u R u'$, $I(u) = I(u')$ and $v_{u'}(\circ \varphi \land \varphi) = 0$. From $v_{w'}(\circ \varphi \land \varphi) = 0$, we obtain $v_{w'}(\varphi) \neq 1$. 
    Because it is transitive through difference, $w R w'$. For $I(w) \neq I(u')$ and $w R w'$, we obtain $v_w(\Box \varphi) = 0$, which contradicts $w \vDash \Box \varphi$.
\end{proof}

\section{Conclusions and further work}

In this work, we explored the consistency operator within a novel framework, focusing on its application to non-normal modal consequence relations over Kripke frames. Departing from traditional approaches, we utilize modal logic structures based on varied (yet classical) base lattices to implement this idea effectively.

A central aspect of our approach is the analysis of modal logic over a four-valued Boolean algebra. Specifically, we study a system in which a four-valued logic is embedded within an eight-valued Boolean algebra. Building on the framework introduced in \cite{freire_modality_2024}, we define a many-logic modal logic by taking the $8$-valued Boolean algebra $B_8$ as the underlying lattice $L$, and considering $LAT$ as the collection of three different four-valued subalgebras of $B_8$. We provide a sound and complete axiomatization of this system.

The final section investigates the relationship between our system and the classical global consequence. We analyze various frame properties and show that traditional characterizations do not always hold. For example, the Euclidean property cannot be captured by the usual formula (Theorem \ref{Euc_no}). However, we demonstrate that a formula involving the consistency operator can still characterize Euclidean frames (Theorem \ref{Euc_yes}). However, some properties resist full characterization even with the consistency operator, indicating the need for an extended language incorporating additional modal operators. Two such cases are discussed at the end of Section \ref{sec_prop}.

In this context, we propose studying conditional versions of standard frame properties, such as:

\begin{enumerate} 
\item \textbf{$L_0$-reflexivity:} For all $w \in W$, if $L_w = L_0$ then $w R w$. 
\item \textbf{Symmetry on agree-worlds:} For all $u, w \in W$ such that $L_u = L_w$, if $u R w$ then $w R u$. 
\end{enumerate}

The exploration of this kind of property allows us to address conflicting perspectives regarding the meaning of logical connectives. As Quine argued in \cite{quine_philosophy_1986}, debates about logical connectives are meaningless since the logical vocabulary itself constitutes the very framework in which such discussions take place. However, following the arguments in \cite{freire_modality_2024}, we consider the possibility of a partial transfer of semantic information through common lattices. This transfer opens space for meaningful discussions about the interpretation of logical connectives. In the example examined in this paper, we do not encounter an explicit disagreement about logical connectives, but rather an implicit one: classical logicians disagree about the uncertain validity of evaluations made by other classical logicians. Deeper disagreements are also possible, as illustrated in \cite{freire_modality_2024}, where some worlds operate under binary classical logic while others employ the three-valued logic of paradox. A broader exploration of this topic may provide foundations for the literature on counterpossibility, offering tools to evaluate the ``possibility of having a different logic.''

We also suggest extending the study of frame properties characterized by modal formulas to other many-logic modal structures. A particularly interesting case is that of finite linear lattices, where one can naturally define the notion of increasing classicality. For example, given sublattices of the linear lattice $[0,1]$ containing $0$ and $1$, we may say one sublattice is more classical than another if it is contained within it. In this framework, we can examine frames whose accessibility relations reflect increasing or decreasing classicality ---i.e., where $w R w'$ implies $L_w \subseteq L_{w'}$, or study transitivity restricted to paths that access increasingly classical worlds (i.e., $w R w' R w''$ and $L_w \subseteq L_{w'} \subseteq L_{w''}$ implies $w R w''$).

Finally, we argue that this framework offers a more natural semantic foundation for non-normal modal logics than traditional neighborhood semantics. As discussed in \cite{freire_modality_2024}, many non-normal phenomena emerge organically in the many-logic setting. For instance, in such a framework, the validity of a formula under necessitation does not necessarily imply its truth across all accessible worlds.

\medskip

\noindent {{\bf Acknowledgments.} This work is supported by FCT – Funda\c c\~ao para a Ci\^encia e a Tecnologia through
project UIDB/04106/2025 at CIDMA and and by the Project Bansky with reference compete2020-feder-00892000.}

\printbibliography

\end{document}